\newtheorem{definition}{Definition}[section]
\newtheorem{remark}[definition]{Remark}
      \newtheorem{proposition}[definition]{Proposition}
      \newtheorem{proposition-definition}[definition]{Proposition-Definition}
      \newtheorem{definition-proposition}[definition]{Definition-Proposition}
      \newtheorem{theorem}[definition]{Theorem}
\newtheorem{example}{Example}
      \newtheorem{lemma}[definition]{Lemma}
      \newtheorem{corollary}[definition]{Corollary}
\numberwithin{equation}{section}
\def\End{{\mathrm {End}}}
\def\id{{\text{id}}}
\def\max{{\mathrm {max}}}
\def\Dom{{\mathrm {Dom}}}
\def\pb{\bar{\partial}}
\def\db{\bar{\delta}}
\def\nb{\bar{\nabla}_\theta}
\def\O{\Omega^{0,1}(S_q^2)}
\def\nb{\overline{\nabla}_\theta}
\def\Z{{\mathbb Z}}
\def\N{{\mathbb N}}
\def\C{{\mathbb C}}
\def\A{\mathcal{A}}
\def\I{\bar{I}}
\def\L{\mathcal{L}}
\def\d{\bar{\delta}}
\newcommand{\bb}[1]{\begin{bmatrix}#1\end{bmatrix}}
\def\arcpt{${{\lower3pt\hbox{$^{\prime\prime}$}}\atop{\raise4pt\hbox{.}}}$}
\def\day{}
\begin{document}


\title{Non-standard Holomorphic Structures on Line Bundles over the Quantum Projective Line}
\author[1]{Mary Graveman}
\author[2]{Landen La Rue}
\author[3]{Lillian MacArthur}
\author[4]{Hunter Pesin}
\author[5]{Zhaoting Wei}

\affil[1]{Lawrence University, Appleton, WI 54911, USA, 
maryhelen.graveman@lawrence.edu}
\affil[2]{University of Wyoming,  Laramie, WY 82071, USA, landen5larue@gmail.com}
\affil[3]{Harvard University, Cambridge, MA 02138, USA, 
lcmacarthur@college.harvard.edu}
\affil[4]{Cornell University,
Ithaca, NY 14853, USA, hp354@cornell.edu}
\affil[5]{Department of Mathematics, East Texas A\&M University, Commerce, TX 75428, USA, zhaoting.wei@etamu.edu}

\date{}

\maketitle

\begin{abstract}
In this paper we study non-standard holomorphic structures on line bundles over the quantum projective line $\C P^1_q$. We show that there exist infinitely many non-gauge equivalent holomorphic structures on those line bundles. This gives a negative answer to a question raised by  Khalkhali, Landi, and Van Suijlekom in 2011.
\end{abstract}

\section{Introduction}

\hspace{5mm} Over the past three decades, noncommutative differential geometry has witnessed substantial progress \cite{connes1994noncommutative}. By contrast, the corresponding theory of noncommutative complex geometry is still at a relatively early stage of development. An important step in this direction was made by D'Andrea,  Dabrowski, and Landi in \cite{d2008noncommutative}, where the authors introduced, for a deformation parameter $0<q<1$, the quantum projective plane $\C P^2_q$. These spaces provide  rich and instructive examples of  noncommutative complex manifolds. For higher dimensional cases and further discussion, see \cite{d2010dirac}, \cite{d2010anti}, and \cite{d2013geometry}. Khalkhali, Landi, and Van Suijlekom in \cite{khalkhali2011holomorphic}  studied holomorphic structures on the quantum projective line $\C P^1_q$ in details.  They  demonstrated that many of the classical features of the complex projective line $\C P^1$ continue to hold in the quantum setting. In particular, for each $n\in\mathbb{Z}$ they constructed holomorphic line bundles $\L_n$ on $\C P^1_q$, which may be regarded as noncommutative analogs of the classical line bundles $\mathcal{O}(n)$ on $\C P^1$. 

A fundamental property of the classical line bundle $\mathcal{O}(n)$ over $\C P^1$ is that its holomorphic structure is unique up to gauge equivalence. More concretely, a holomorphic structure on a complex vector bundle $E$ over a complex manifold $X$ is given by a flat $\bar{\partial}$-connection
\[
\overline{\nabla}: E \longrightarrow \Omega^{0,1}(X)\otimes E,
\]
and two such structures $\overline{\nabla}_1$ and $\overline{\nabla}_2$ are said to be gauge equivalent if there exists an invertible bundle map $g\in \operatorname{Aut}(E)$ such that
\[
g\circ \overline{\nabla}_1\circ g^{-1} = \overline{\nabla}_2.
\]
It is a classical fact that any holomorphic structure on $\mathcal{O}(n)$ is gauge equivalent to the standard one. Motivated by this, the authors of \cite[Page 872]{khalkhali2011holomorphic} asked whether the same statement remains true for the quantum line bundles $\L_n$ on $\C P^1_q$.

The purpose of this paper is to provide a negative answer to this question.

\begin{theorem}[See Theorem \ref{thm: gauge equivalent to trivial criterion} below]
For $0<q<1$, each quantum line bundle $\L_n$ over $\C P^1_q$ admits a flat $\bar{\partial}$-connection $\overline{\nabla}$ which is not gauge equivalent to the standard $\bar{\partial}$-connection.
\end{theorem}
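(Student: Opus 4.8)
The plan is to turn the problem into a question about a group acting on an affine space, and then to show that the group is far too small to act transitively. Since the Dolbeault calculus on $\C P^1_q$ satisfies $\Omega^{0,2}=0$, the curvature $\overline{\nabla}^2$ of any $\bar{\partial}$-connection lands in $\Omega^{0,2}\otimes_{\A}\L_n=0$, so \emph{every} $\bar{\partial}$-connection on $\L_n$ is automatically flat. Consequently the set of holomorphic structures on $\L_n$ is the affine space $\overline{\nabla}_0+V$, where $\overline{\nabla}_0$ is the standard structure, $\A=\A(\C P^1_q)$ is the coordinate algebra, and
\[
V=\Hom_{\A}\bigl(\L_n,\ \Omega^{0,1}\otimes_{\A}\L_n\bigr)
\]
is the space of right-$\A$-linear connection one-forms. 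First I would record this reduction, and then translate gauge equivalence to the standard structure into a concrete equation: a gauge transformation $g\in\operatorname{Aut}_{\A}(\L_n)$ acts by $\alpha\mapsto g\alpha g^{-1}+g\,\bar{\partial}(g^{-1})$, so $\overline{\nabla}_0+\alpha$ is gauge equivalent to $\overline{\nabla}_0$ precisely when $\alpha$ lies in the orbit of $0$, i.e.\ $\alpha=g\,\bar{\partial}(g^{-1})$ for some invertible $g$. This is the content I expect from the referenced criterion.

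Second, I would pin down the two ingredients of this equation. For the space $V$, I would use the identification $\Omega^{0,1}(\C P^1_q)\cong\L_2$ coming from the calculus (the anti-canonical degree $+2$; the opposite sign convention would give $V=0$ and must be ruled out), so that $\Omega^{0,1}\otimes_{\A}\L_n\cong\L_{n+2}$ and, since the $\L_n$ are invertible bimodules, $V\cong\Hom_{\A}(\L_n,\L_{n+2})\cong\L_2$. The essential observation is that this is an \emph{infinite-dimensional} $\C$-vector space: here the quantum story departs sharply from the classical one, where the coordinate ring of $\C P^1$ is only $\C$ and this module of homomorphisms collapses. For the gauge group I would use $\operatorname{Aut}_{\A}(\L_n)=\End_{\A}(\L_n)^{\times}\cong\A^{\times}$ and then prove $\A^{\times}=\C^{\times}$, exploiting that $\A(\C P^1_q)$ sits inside the Noetherian domain $\A(SU_q(2))$ and carries a filtration (by the spectral/spherical decomposition) whose associated graded is a domain with scalar degree-zero part; a degree count then forces every unit to be a scalar.

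Granting these, the theorem follows at once: scalars are central and $\bar{\partial}$-closed, so $\A^{\times}=\C^{\times}$ acts trivially on $V$, the orbit of the standard structure is the single point $\{0\}$, and therefore every nonzero $\alpha\in V$ gives a holomorphic structure not gauge equivalent to $\overline{\nabla}_0$, with distinct $\alpha$ yielding distinct classes. Since $V\cong\L_2$ is infinite-dimensional this produces infinitely many non-gauge-equivalent structures, which is the stronger statement of the abstract.

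The step I expect to be the main obstacle is exactly the determination of the gauge group, that is, proving $\A(\C P^1_q)^{\times}=\C^{\times}$ and hence that the ``logarithmic'' image $\{\,g\,\bar{\partial}(g^{-1}) : g\in\A^{\times}\,\}$ is a proper subset of $V$. This is the quantum phenomenon that substitutes for the classical mechanism, where one combines $H^{0,1}(\C P^1)=0$ with the exponential $g=e^{f}$ to trivialize any connection form; in the polynomial algebra $\A(\C P^1_q)$ such exponentials simply do not exist. To make the conclusion robust against enlarging the gauge group, I would additionally compute the manifestly gauge-invariant dimension $\dim_{\C}\Ker\bigl(\overline{\nabla}_0+\alpha\bigr)$ of holomorphic sections for explicit choices of $\alpha$ and verify that it differs from $\dim_{\C}\Ker(\overline{\nabla}_0)$, giving an independent, invariant-based proof of non-equivalence.
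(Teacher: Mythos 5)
Your reduction to the affine space of connection forms and to the equation $\alpha = g\,\pb(g^{-1})$ matches the paper's setup, but the core of your argument --- that the gauge group is $\A(\C P^1_q)^{\times}=\C^{\times}$, hence acts trivially, hence \emph{every} nonzero $\alpha$ is non-equivalent to the standard structure --- does not establish the theorem in the sense the paper (and the original question) intends. The paper explicitly observes that requiring $g$ to lie in the polynomial algebra is too restrictive: in Example \ref{example: exp equation for B_0} the connection form $\theta=\pb(B_0)$ \emph{is} trivialized by the invertible element $g=\sum_{n\geq 0} B_0^{n}/\bigl(q^{n(n-1)/2}[n]_{q^2}!\bigr)$, which lives in $\mathcal{C}(\C P^1_q)\cap\Dom(\pb)$ but not in $\A(\C P^1_q)$. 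Accordingly, gauge equivalence is defined (Definition \ref{def: gauge equivalence}) with $g\in\mathcal{C}(\C P^1_q)^{\times}\cap\Dom(\pb)$, a group containing far more than scalars; indeed Corollary \ref{coro: less than one half gauge equivalent to zero} shows every $\theta=\pb f$ with $f\in C^{*}(1,B_0)$, $\|f\|<\tfrac12$, is gauge-trivial. So the orbit of the standard structure is not $\{0\}$, and your conclusion that any nonzero $\alpha$ works is false under the operative definition. Even within your narrower framework, the claim $\A(\C P^1_q)^{\times}=\C^{\times}$ is only sketched (via a filtration/degree count), and proving it would only yield the weak and somewhat artificial statement that no \emph{polynomial} gauge transformation exists.

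What is genuinely needed, and what the paper supplies, is an obstruction that survives the enlarged gauge group. The paper restricts to $\theta=\pb f$ with $f$ in the commutative subalgebra $C^{*}(1,B_0)\cong C(sp(B_0))$, solves the recursion $g(q^{2n})=g(1)\prod_{k=1}^{n}\bigl(1-f(q^{2k-2})+f(q^{2k})\bigr)$ explicitly, and shows an invertible solution exists iff $f$ has no ``defective spot'' $f(q^{2n})-f(q^{2n-2})=1$; when a defective spot exists, the nonzero non-invertible solution forces (via $\ker\pb\cap\mathcal{C}(\C P^1_q)=\C$, Lemma \ref{lemma: non zero solution non gauge equivalent}) that no invertible solution can exist. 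The concrete witness is $f=B_0/(1-q^2)$. Your closing suggestion --- comparing the gauge-invariant dimensions $\dim\ker(\overline{\nabla}_0+\alpha)$ --- is indeed robust and is essentially how the paper proves the stronger result that there are infinitely many classes (Section \ref{section: holomorphic sections}), but as written it is an unexecuted sketch rather than a proof, and it cannot rescue the main line of your argument.
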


We refer to such $\bar{\partial}$-connections as \emph{non-standard holomorphic structures}. Our work further investigates the nature of these non-standard structures and their mutual gauge equivalence. A key observation is that the dimension of spaces of holomorphic sections of non-standard holomorphic structures can grow indefinitely.

\begin{proposition}[See Corollary \ref{coro: ker_dim_lower_bound} below]
For $0<q<1$, on $\L_0$ over $\C P^1_q$, and for every $N\in\mathbb{N}$, there exists a $\bar{\partial}$-connection $\overline{\nabla}$ such that
\[
\dim \ker \overline{\nabla} \geq N.
\]
\end{proposition}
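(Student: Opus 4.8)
The plan is to realize every $\bar{\partial}$-connection on $\L_0$ as a perturbation of the standard one and then engineer the perturbation so that the kernel is forced to be large. First I would recall that since the difference of two $\bar{\partial}$-connections on $\L_0$ is right $A(S_q^2)$-linear, and since $\L_0=A(S_q^2)$ is free of rank one with $\operatorname{End}(\L_0)\cong A(S_q^2)$, every $\bar{\partial}$-connection has the form $\overline{\nabla}=\overline{\nabla}_\theta+\alpha$ for a unique $\alpha\in\O$, acting by $\xi\mapsto\overline{\nabla}_\theta\xi+\alpha\xi$. The kernel is then the solution space of the linear equation $\overline{\nabla}_\theta\xi=-\alpha\xi$, so the whole problem becomes: choose $\alpha$ so that this equation has at least $N$ linearly independent solutions.

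Second, I would exploit the Peter--Weyl/spectral decomposition $\L_0=\bigoplus_{l\ge 0}V_l$ into the isotypic components $V_l$ for the $SU_q(2)$-symmetry, under which $\overline{\nabla}_\theta$ is diagonal: it annihilates the lowest component $V_0=\C$ (the constants, so that $\dim\ker\overline{\nabla}_\theta=1$) and acts as a nonzero scalar isomorphism onto the corresponding component of $\O\otimes_A\L_0$ on each $V_l$ with $l\ge 1$. Writing $\xi=\sum_l\xi_l$ and expanding $\alpha$ in the same decomposition, the equation $\overline{\nabla}_\theta\xi=-\alpha\xi$ becomes a recursion relating the components $\xi_l$ across spins, the coupling being governed by the Clebsch--Gordan rule for multiplication by $\alpha$.

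Third, I would make an explicit choice of $\alpha=\alpha_N$ — a scalar multiple of a fixed homogeneous generator of $\O$ raised to a suitable order, chosen so that the recursion splits into nearly independent strands — and then produce $N$ explicit solutions by solving the recursion strand-by-strand, specifying an $N$-dimensional space of initial data in the low-spin components and propagating it upward. Since $0<q<1$, the structure constants entering the recursion are controlled $q$-deformations of the classical ones, which I would use to guarantee that the propagation never terminates and that distinct initial data yield distinct, hence linearly independent, solutions. Once $N$ independent elements of $\ker\overline{\nabla}$ are in hand, the bound $\dim\ker\overline{\nabla}\ge N$ of Corollary \ref{coro: ker_dim_lower_bound} follows immediately.

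I expect the main obstacle to be exactly the lower bound — genuinely producing $N$ independent solutions rather than merely one. Classically this is impossible: on $\C P^1$ every $(0,1)$-form is $\bar{\partial}$-exact, so $\overline{\nabla}_\theta+\alpha$ is gauge equivalent to $\overline{\nabla}_\theta$ via $e^{-h}$ and the kernel stays one-dimensional. The quantum mechanism that defeats this rigidity is that the formal gauge element $e^{-h}$ need not lie in $A(S_q^2)$, so the perturbed kernel can be strictly larger; converting this heuristic into a rigorous count — taming the noncommutativity of the recursion and verifying linear independence uniformly in $N$ — is where the real work lies, and it is here that I would lean on the criterion of Theorem \ref{thm: gauge equivalent to trivial criterion} to keep track of which perturbations actually escape the gauge orbit.
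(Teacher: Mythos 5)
Your first step is sound and matches the paper's setup: every $\pb$-connection on $\L_0$ is $\overline{\nabla}_{\theta}(s)=\pb s - s\theta$ for some $\theta\in\Omega^{0,1}(\C P^1_q)$, and the task is to choose $\theta$ so that $\pb \xi=\xi\theta$ has $N$ independent solutions. But the core of your argument --- the choice of $\alpha_N$ and the reason each ``strand'' of the recursion admits a nonzero admissible solution --- is left unspecified (``chosen so that the recursion splits into nearly independent strands''), and the heuristic you offer for it points in the wrong direction. You say you would arrange that ``the propagation never terminates''; in fact, for a first-order recursion of this type with nonzero initial data, propagating forever generically produces a divergent formal series that does not lie in the module or its $C^*$-completion (this is exactly why the standard connection has one-dimensional kernel: the only strand whose infinite product stays bounded is the constant one). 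The paper's mechanism is the opposite. It takes $\theta=\pb h$ with $h$ a function of $B_0$ alone, e.g.\ $h=B_0/(q^{2N-2}-q^{2N})$, so that under the functional-calculus identification $C^*(1,B_0)\cong C(sp(B_0))$ with $sp(B_0)=\{0\}\cup\{q^{2k}\}$, the equation $\overline{\nabla}_{\theta}(fB_-^n)=0$ for $f=f(B_0)$ reduces to the scalar recursion $f(q^{2k+2})=q^{-2n}\bigl(1-h(q^{2k+2n})+h(q^{2k+2n+2})\bigr)f(q^{2k})$. The point of the ``defective spot'' $h(q^{2N})-h(q^{2N-2})=1$ is that for every $n<N$ this recursion hits a zero factor, so $f$ is eventually zero on the spectrum, hence continuous and in $\Dom(\db)$: the recursion \emph{terminates}, and that termination is precisely what makes the solution admissible. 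This yields one kernel element $f_nB_-^n$ for each $n=0,\dots,N-1$, and their linear independence is verified in the faithful representation on $\ell^2$ by testing against the vectors $\pi(B_+^{n})e_0$ (Corollary \ref{coro: ker nb B-n} and Lemma \ref{lemma: fB- linearly independent}).

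Two further points. The Peter--Weyl decomposition you propose is considerably less tractable than the paper's reduction: the isotypic components are not one-dimensional and multiplication by $\alpha$ couples them via Clebsch--Gordan rules, so your ``recursion'' is a system of linear maps between spaces of growing dimension, and producing an $N$-dimensional space of initial data that propagates to genuine (convergent) elements of the completion is exactly the unproved step. Also, leaning on Theorem \ref{thm: gauge equivalent to trivial criterion} does not help here: non-gauge-equivalence to the standard connection gives no lower bound on $\dim\ker\overline{\nabla}_{\theta}$; the bound requires the explicit construction of kernel elements.
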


The above theorem, combined with the general fact that $\dim \ker \overline{\nabla}$ is finite, implies the following theorem, which is the main result of this paper:

\begin{theorem}[See Theorem \ref{thm: infinitely many gauge class} below]
For $0<q<1$, the line bundle $\L_n$ over $\C P^1_q$ carries infinitely many holomorphic structures, no two of which are gauge equivalent.
\end{theorem}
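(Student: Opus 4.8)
The plan is to isolate a single numerical invariant of $\bar\partial$-connections that is constant on gauge-equivalence classes yet takes infinitely many values, namely the dimension of the space of holomorphic sections $\dim\ker\overline{\nabla}$. First I would record the elementary but decisive observation that this dimension is a gauge invariant: if $g\circ\overline{\nabla}_1\circ g^{-1}=\overline{\nabla}_2$ for an invertible bundle map $g$, then for any $s\in\ker\overline{\nabla}_1$ we have $\overline{\nabla}_2(gs)=(g\circ\overline{\nabla}_1\circ g^{-1})(gs)=g\,\overline{\nabla}_1 s=0$, so $s\mapsto gs$ carries $\ker\overline{\nabla}_1$ into $\ker\overline{\nabla}_2$; applying the same argument to $g^{-1}$ shows this is a bijection. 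Hence $\dim\ker\overline{\nabla}_1=\dim\ker\overline{\nabla}_2$, and two connections with different kernel dimensions can never be gauge equivalent. The only point to check carefully is that $g$, the $\overline{\nabla}_i$, and their kernels are all interpreted as $\C$-linear maps of the relevant modules over the coordinate algebra of $\C P^1_q$, so that the displayed bijection is genuinely linear; this is immediate from the definitions.

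Next I would assemble the two inputs already in hand. On the one hand, the finiteness fact recalled in the introduction gives $\dim\ker\overline{\nabla}<\infty$ for every $\bar\partial$-connection. On the other hand, the Proposition (Corollary \ref{coro: ker_dim_lower_bound}) produces, for each $N\in\N$, a $\bar\partial$-connection on $\L_0$ with $\dim\ker\overline{\nabla}\ge N$. Together these say that the function $\overline{\nabla}\mapsto\dim\ker\overline{\nabla}$ is finite-valued but unbounded, so it attains infinitely many distinct values; selecting one connection realizing each attained value yields infinitely many $\bar\partial$-connections that are pairwise non-gauge-equivalent by the invariance of the first paragraph. I would also note that since $\C P^1_q$ is one-dimensional, $\Omega^{0,2}(\C P^1_q)=0$, so $\overline{\nabla}^2=0$ automatically and every $\bar\partial$-connection is flat, i.e.\ a genuine holomorphic structure; this is what permits passing freely between the language of holomorphic structures and that of $\bar\partial$-connections, and it is why the construction above already produces infinitely many gauge classes of holomorphic structures on $\L_0$.

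It remains to promote the statement from $\L_0$ to a general $\L_n$. Using the trivialization $\L_n\cong\L_n\otimes\L_0$, I would fix the standard holomorphic structure $\overline{\nabla}^{\mathrm{st}}_n$ on $\L_n$ and transport each connection $\overline{\nabla}$ from the $\L_0$-family by forming the tensor-product connection with $\overline{\nabla}^{\mathrm{st}}_n$; equivalently, since the $\bar\partial$-connections on a line bundle form an affine space modelled on $\Omega^{0,1}(\C P^1_q)$, this amounts to the translation $\overline{\nabla}^{\mathrm{st}}_0+\alpha\mapsto\overline{\nabla}^{\mathrm{st}}_n+\alpha$. The main obstacle is that, in contrast to the commutative case, tensoring with $\overline{\nabla}^{\mathrm{st}}_n$ need not preserve $\dim\ker$ on the nose, so I would not expect the lower bound to transport formally. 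Instead I anticipate needing the explicit description of $\L_n$ and of the holomorphic sections used in proving Corollary \ref{coro: ker_dim_lower_bound}, and re-running that computation with the twist by $\overline{\nabla}^{\mathrm{st}}_n$ in place, to confirm that arbitrarily large kernels still occur on $\L_n$. Once that is secured, the invariance-plus-finiteness argument applies verbatim and delivers infinitely many pairwise non-gauge-equivalent holomorphic structures on each $\L_n$.
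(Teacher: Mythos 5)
Your argument for $\L_0$ is exactly the paper's: $\dim\ker\overline{\nabla}$ is a gauge invariant, it is finite by Corollary \ref{coro: any connection gives a spectral triple}, and it is unbounded by Corollary \ref{coro: ker_dim_lower_bound}, so infinitely many gauge classes exist. Where you diverge is the passage from $\L_0$ to $\L_n$. You propose to transport each connection by tensoring with the standard structure and then to \emph{re-run the kernel computation on $\L_n$}, correctly flagging that $\dim\ker$ need not be preserved under this operation; as written this leaves the final step as an announced plan rather than a proof. The paper avoids the issue entirely via Proposition \ref{prop: one-one corrspondence gauge equivalence class}: the tensor-product construction of Definition-Proposition \ref{def-prop: tensor product connection} identifies $\overline{\nabla}^{(m)}\otimes\nb^{(n)}$ with $\nb^{(m+n)}$, i.e.\ it acts as the identity on the affine parameter $\theta$, and the gauge-equivalence relation in Definition \ref{def: gauge equivalence} is the single equation $\pb(g)=g\theta_2-\theta_1 g$, which does not depend on $n$. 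Hence the \emph{sets of gauge classes} on $\L_0$ and $\L_n$ are in bijection, and infinitude transfers with no need to track kernel dimensions on $\L_n$ at all. Your invariant-based argument buys a concrete separating quantity on $\L_0$; the paper's bijection buys the generalization to all $n$ for free. If you want to complete your version, replace the re-computation step by this observation about the $n$-independence of the gauge condition.
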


\medskip
\noindent
\textbf{Organization of the paper.}  
In Section \ref{section: review of quanum line}, we review the construction of the quantum projective line and recall the definition of holomorphic structures in this setting. Section \ref{section: non standard holomorphic structures} focuses on a distinguished sub-$C^*$-algebra of the quantum projective line, where we explicitly construct non-standard holomorphic structures and analyze their gauge equivalence classes. In Section \ref{section: holomorphic sections}, we study the holomorphic sections associated with these structures and establish the existence of classes with arbitrarily large finite dimension. Finally in Section \ref{section: future work} we briefly discuss some possible future work.

\subsection*{Acknowledgments}
The authors would like to thank Jonathan Block for his encouragement and valuable suggestions. They are also grateful to David Gao and R\'{e}amonn \'{O} Buachalla for kindly answering questions related to this work,  to Ryszard Szwarc for directing them to the reference \cite{koekoek2010hypergeometric}.  They want to thank Francesco D'Andrea for helpful comments on this work and   Satyajit Guin for pointing out a mistake in the first version of this paper.

This work was carried out during the TADM-REU program in Summer 2025, supported by the National
Science Foundation under Grant No DMS-2243991. The authors would like to thank East Texas A\&M University for providing an excellent research environment throughout the program. They also want to thank  Padmapani Seneviratne and Mehmet Celik for their organization and support during the program.

Z.W.'s research is partially supported by the AMS-Simons Research Enhancement Grants for Primarily Undergraduate Institution (PUI) Faculty.

\section{Review of the Quantum Projective line and line bundles on it}\label{section: review of quanum line}
In this section we mainly follow \cite[Section 3.1]{khalkhali2011holomorphic}.

\subsection{The quantum projective line  $\C P^1_q$}\label{subsection: quantum projective line}
The algebra $\A(SU_q(2))$ is the unital Hopf $*$-algebra with defining matrix \[
\bb{a & -qc^* \\ c & a^*}
\]
that is, the unital Hopf $*$-algebra defined by the relations 
\begin{equation}\label{eq: relations in A(SUq(2))}
    \begin{split}
    &ac=qca,~ac^*=qc^*a,~cc^*=c^*c,\\
    & a^*a+c^*c=aa^*+q^2cc^*=1.
    \end{split}
\end{equation}

It can be shown that $\A(SU_q(2))$ is a compact quantum group.


The algebra $\A(S_q^2)\to B(\ell^2)$ is a $*$-subalgebra of $\A(SU_q(2))$ which is given by the generators 
\begin{equation}\label{eq: generators of A(CP1)}
B_- =ac^*,~ B_+=ca^*,~ B_0=cc^*,
\end{equation}
one can calculate that these generators must obey the basic relations

\begin{equation}
    \begin{split}
        &B_-B_0=q^2B_0B_{-},~ B_{+}B_0=q^{-2}B_0B_{+},\\
        &B_-B_+=q^2B_0(1-q^2B_0),~B_+B_-=B_0(1-B_0)\\
        &B_0^*=B_0,~ B_+^*=B_-.
    \end{split}
\end{equation}

\begin{remark}
In the classical case where $q = 1,$ and we consider $S^2$ as the standard sphere, we see that $B_-, B_+,$ and $B_0$ correspond to $\frac{x-iy}{2}, \frac{x+iy}{2},$ and $\frac{1-z}{2}$ in $\A(S^2).$
\end{remark}

Line bundles $\L_n$ can be defined on $\A(S_q^2)$ as the $\A(S_q^2)$-sub-bimodules of $\A(SU_q(2)).$ These are generated by 

\begin{equation}
    \begin{split}
        &\{(c^*)^m(a^*)^{n-m},~m=0,\ldots n\} \text{ for }n\geq 0;\\
        &\{c^ma^{-n-m},~m=0,\ldots -n\} \text{ for }n\leq 0
    \end{split}
\end{equation}
Note that $\L_n$'s are projective $\A(S_q^2)$-modules.

One can observe that 
\begin{equation}\label{eq: line bundle product}
\L_0=\A(S_q^2),~\L_n^*=\L_{-n},~ \L_m\otimes_{\A(S_q^2)}\L_n\cong \L_{m+n}.
\end{equation}
If we denote by $\End_{\A(S_q^2)}(\L_n)$ the ring endomoprhisms of $\L_n$ as left $\A(S_q^2)$-modules, then we have 
\begin{equation}\label{eq: end of line bundles}
\End_{\A(S_q^2)}(\L_n)=\A(S_q^2),
\end{equation}
where the right hand side means right multiplication by $\A(S_q^2)$.

In \cite[Equation (3.18)]{khalkhali2011holomorphic} the authors introduced the $1$-form
$$
\omega_-:=c^* da^*-qa^*dc^*
$$
which satisfies ( \cite[Equation (3.21)]{khalkhali2011holomorphic})
\begin{equation}\label{eq: omega- commuting}
\begin{split}
&\omega_-a=q^{-1}a\omega_-,~\omega_-c=q^{-1}c\omega_-,\\
&\omega_-a^*=qa^*\omega_-,~ \omega_-c^*=qc^*\omega_-
\end{split}
\end{equation}
It is clear from \eqref{eq: omega- commuting} that $\omega_-$ commutes with elements in $\A(S_q^2)$.

Let $\Omega^{0,1}(\C P^1_q)$ be $\L_{-2}\omega_-$ as an $\A(S_q^2)$-bimodule. We can define the $\pb$ operator on $\A(S_q^2)$ as the map generated by the actions:

\begin{equation}\label{eq: pb on generators}
    \overline{\partial}(B_0)=-q^{-\frac{1}{2}}ca\omega_-,~\overline{\partial}(B_+)=q^{\frac{1}{2}}c^2\omega_-,~\overline{\partial}(B_-)=-q^{-\frac{1}{2}}a^2\omega_-.
\end{equation}
This makes $(\A(S_q^2), \overline{\partial})$ an algebra with complex structure in the sense of \cite[Definition 2.1]{khalkhali2011holomorphic}. From now on we denote $\A(S_q^2)$ by $\A(\C P^1_q)$.

\begin{lemma}\label{lemma: B0 pb B-  B+commutes}
    We have the following relations
    \begin{equation}\label{eq: B0 pb B- B+ commutes}
    \begin{split}
        &(\pb B_0 )B_0=q^2 B_0 \pb B_0, ~ (\pb B_- )B_-=q^2 B_- \pb B_-, ~(\pb B_+ )B_+=q^2 B_+ \pb B_+,\\
        & (\pb B_0 )B_-=B_-\pb B_0=q^2 B_0 \pb B_-,~(\pb B_0) B_+ = -\pb B_+ + q^2 B_0 \pb B_+.
        \end{split}
    \end{equation}
\end{lemma}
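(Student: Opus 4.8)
The plan is to verify each of the five identities in \eqref{eq: B0 pb B- B+ commutes} by direct computation, exploiting the fact that every $\pb B_i$ factors as a monomial in $a$ and $c$ times the one-form $\omega_-$. Concretely, from \eqref{eq: pb on generators} we have $\pb B_0 = -q^{-1/2}(ca)\omega_-$, $\pb B_+ = q^{1/2}c^2\omega_-$, and $\pb B_- = -q^{-1/2}a^2\omega_-$. Since each generator $B_0,B_+,B_-$ lies in $\A(\C P^1_q)$ and $\omega_-$ commutes with all of $\A(S_q^2)$ (as noted after \eqref{eq: omega- commuting}), any product $(\pb B_i)B_j$ can be rewritten by sliding $\omega_-$ all the way to the right. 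This reduces each proposed identity to an equality between monomials in $a,c,a^*,c^*$ carrying a common trailing factor $\omega_-$, which I would then check using only the defining relations \eqref{eq: relations in A(SUq(2))}.

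For the three ``diagonal'' relations $(\pb B_i)B_i = q^2 B_i\pb B_i$, clearing $\omega_-$ reduces the claims to the monomial identities $cacc^* = q^2 cc^*ca$, $a^3c^* = q^2 ac^*a^2$, and $c^3a^* = q^2 ca^*c^2$. The first two follow by repeated application of $ac=qca$ and $ac^*=qc^*a$ together with $cc^*=c^*c$, while the third uses the adjoint relation $ca^*=qa^*c$ obtained by conjugating $ac^*=qc^*a$ (here $q\in(0,1)$ is real). The relation for $B_-$ is handled the same way: after clearing $\omega_-$ one checks $ca^2c^* = ac^*ca = q^2cc^*a^2$, so the two claimed expressions for $(\pb B_0)B_-$ and $B_-\pb B_0$ coincide, and the same reduction shows both equal $q^2 B_0\pb B_-$.

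The one step I expect to require genuine care is the final relation $(\pb B_0)B_+ = -\pb B_+ + q^2 B_0\pb B_+$, since here the right-hand side is inhomogeneous. After sliding $\omega_-$ to the right the left-hand side becomes $-q^{-1/2}(caca^*)\omega_-$, and reducing $caca^* = qc^2aa^*$ via $ac=qca$ leaves a factor $aa^*$. The crucial move is to invoke the quantum-sphere relation $aa^*+q^2cc^*=1$, that is $aa^*=1-q^2cc^*$: the summand $1$ produces the inhomogeneous term $-\pb B_+$, while $-q^2cc^*$ produces $q^2 B_0\pb B_+$, after noting that $c^2cc^* = cc^*c^2$ because $c$ and $c^*$ commute. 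Thus the whole lemma is a finite sequence of rearrangements inside $\A(SU_q(2))$, the only conceptual input being the use of the sphere relation in this last identity.
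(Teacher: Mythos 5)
Your proposal is correct and matches the paper's approach: the paper simply states that the identities are direct consequences of \eqref{eq: relations in A(SUq(2))}, \eqref{eq: generators of A(CP1)}, and \eqref{eq: pb on generators}, and your computation (sliding $\omega_-$ to the right, reducing to monomial identities, and using $aa^*=1-q^2cc^*$ for the inhomogeneous term in the last relation) is exactly the verification being alluded to. All of your monomial identities check out against the defining relations.
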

\begin{proof} They are direct consequences of \eqref{eq: relations in A(SUq(2))}, \eqref{eq: generators of A(CP1)}, and \eqref{eq: pb on generators}.
\end{proof}

\begin{lemma}\label{lemma: twist flip map}[\cite{khalkhali2011holomorphic} Lemma 3.6]
For any integer n, there is a twisted flip isomorphism
\begin{equation}\label{eq: twist flip isomorphism}
\Phi_{(n)}: \L_n\otimes_{\A(\C P^1_q)} \Omega^{0,1}(\C P^1_q)\overset{\sim}{\to}\Omega^{0,1}(\C P^1_q) \otimes_{\A(\C P^1_q)} \L_n
\end{equation}
as $\A(\C P^1_q)$-bimodules.
\end{lemma}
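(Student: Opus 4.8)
The plan is to construct $\Phi_{(n)}$ explicitly by routing both tensor products through a common copy of $\L_{n-2}\omega_-$ inside $\A(SU_q(2))$ and then inverting one leg. Throughout write $\A := \A(\C P^1_q)$. Two facts should be isolated first: (i) $\omega_-$ commutes with all of $\A$, already noted after \eqref{eq: omega- commuting}; and (ii) for every $\xi\in\L_n$ one has the twisted commutation $\omega_-\,\xi = q^{n}\,\xi\,\omega_-$. I would derive (ii) by writing a generator of $\L_n$ as a degree-$|n|$ monomial in $a^*,c^*$ (for $n\geq 0$) or in $a,c$ (for $n\leq 0$) and applying \eqref{eq: omega- commuting} one factor at a time: each starred generator contributes $q$ and each unstarred one contributes $q^{-1}$, so the total factor is $q^{n}$ in both regimes.

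Next I would define two multiplication-type maps into $\A(SU_q(2))\,\omega_-$. Let $L:\L_n\otimes_{\A}\Omega^{0,1}(\C P^1_q)\to\A(SU_q(2))\,\omega_-$ be $\xi\otimes\zeta\omega_-\mapsto\xi\zeta\,\omega_-$, and let $R:\Omega^{0,1}(\C P^1_q)\otimes_{\A}\L_n\to\A(SU_q(2))\,\omega_-$ be $\zeta\omega_-\otimes\xi\mapsto\zeta\omega_-\xi=q^{n}\zeta\xi\,\omega_-$, the last equality using (ii). Because $\omega_-$ is central in $\A$ by (i), both maps descend to the balanced tensor products: the middle $\A$-action can be carried across the $\otimes$ using associativity of multiplication in $\A(SU_q(2))$ together with $\omega_- b = b\omega_-$ for $b\in\A$. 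Both are manifestly left and right $\A$-linear, and their common image is $\L_{n-2}\omega_-$, since $\L_n\L_{-2}$ and $\L_{-2}\L_n$ both coincide with $\L_{n-2}$ inside $\A(SU_q(2))$.

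The key step is that $L$ and $R$ are bijections onto $\L_{n-2}\omega_-$, and this is exactly where I would invoke \eqref{eq: line bundle product}. Under the bimodule identification $\Omega^{0,1}(\C P^1_q)=\L_{-2}\omega_-\cong\L_{-2}$ (valid precisely because $\omega_-$ is central in $\A$), the map $L$ becomes the multiplication isomorphism $\L_n\otimes_{\A}\L_{-2}\xrightarrow{\sim}\L_{n-2}$ of \eqref{eq: line bundle product}, while $R$ becomes $q^{n}$ times $\L_{-2}\otimes_{\A}\L_n\xrightarrow{\sim}\L_{n-2}$. Setting $\Phi_{(n)}:=R^{-1}\circ L$ then gives the desired bimodule isomorphism. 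On elementary tensors it is characterized by $\Phi_{(n)}(\xi\otimes\zeta\omega_-)=\zeta'\omega_-\otimes\xi'$ whenever $q^{n}\,\zeta'\xi'=\xi\zeta$ in $\L_{n-2}$, which is where the $q^n$ twist becomes visible. (One could instead argue abstractly that both sides are isomorphic to $\L_{n-2}$ by \eqref{eq: line bundle product} and stop, but the explicit $\Phi_{(n)}$ is the natural map needed downstream.)

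The main obstacle is the bijectivity of $L$ and $R$; granting \eqref{eq: line bundle product} this is immediate, and the rest is routine bookkeeping of balancedness and bimodule linearity. The only genuinely delicate point is keeping track of the two roles of $\omega_-$: it is central with respect to $\A(\C P^1_q)$ but only $q^{n}$-central with respect to sections of $\L_n\subset\A(SU_q(2))$, and it is precisely this mismatch that produces the twist and makes the naive untwisted flip fail to respect the balancing over $\A$. If one wished to avoid citing \eqref{eq: line bundle product}, the real content would shift to exhibiting explicit dual bases for the projective modules $\L_n$ and verifying directly that multiplication in $\A(SU_q(2))$ is invertible at the level of $\otimes_{\A}$, which is the standard computation for these quantum line bundles.
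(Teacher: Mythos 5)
The paper does not prove this lemma at all: it is imported verbatim from \cite{khalkhali2011holomorphic} (their Lemma 3.6), so there is no in-paper argument to compare against. Your construction is essentially the standard one underlying that reference, and it is correct: the twisted commutation $\omega_-\,\xi=q^{n}\xi\,\omega_-$ for $\xi\in\L_n$ does follow from \eqref{eq: omega- commuting} applied factor-by-factor to the monomial generators and then extends to all of $\L_n$ by bimodule linearity (using that $\omega_-$ is central over $\A(\C P^1_q)$); your maps $L$ and $R$ are balanced and bilinear for exactly the reasons you give; and $\Phi_{(n)}=R^{-1}\circ L$ is then a bimodule isomorphism. You have also correctly located the one genuine dependency: you need the isomorphism $\L_m\otimes_{\A(\C P^1_q)}\L_n\cong\L_{m+n}$ of \eqref{eq: line bundle product} to be \emph{implemented by multiplication} in $\A(SU_q(2))$, which is slightly stronger than the bare statement recorded in the paper. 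That strengthening is true and is how the isomorphism is established in the cited source (via the explicit dual bases/idempotents exhibiting $\L_n$ as projective), so this is a citation-granularity issue rather than a gap; it would be worth stating it explicitly as the hypothesis you are actually invoking. One stylistic caution: the resulting $\Phi_{(n)}$ is characterized only implicitly by $q^{n}\zeta'\xi'=\xi\zeta$, and if a later argument (e.g.\ Lemma \ref{lemma: associativity of Phi} or \eqref{eq: right Leibniz rule}) needs the specific normalization of the flip used in \cite{khalkhali2011holomorphic}, you would need to check that your $\Phi_{(n)}$ agrees with theirs and not merely with some bimodule isomorphism between the two sides.
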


\begin{lemma}\label{lemma: associativity of Phi}
    Under the isomorphism $\L_m\otimes_{\A(\C P^1_q)}\L_n\cong \L_{m+n}$ in \eqref{eq: line bundle product}, we have the identity
    \begin{equation}\label{eq: associatitity of Phi}
   \Phi_{(m+n)} =(\Phi_{(m)}\otimes \id)\circ(\id\otimes \Phi_{(n)})
    \end{equation}
    as isomorphisms from $\L_m\otimes_{\A(\C P^1_q)}\L_n\otimes_{\A(\C P^1_q)} \Omega^{0,1}(\C P^1_q)$ to $\Omega^{0,1}(\C P^1_q) \otimes_{\A(\C P^1_q)} \L_m\otimes_{\A(\C P^1_q)}\L_n$.
\end{lemma}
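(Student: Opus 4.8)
The plan is to realize all the bimodules and maps concretely inside $\A(SU_q(2))$ via multiplication, and then deduce the coherence identity from associativity of that multiplication. The starting point is a grading observation: assign $\deg a=\deg c=-1$ and $\deg a^*=\deg c^*=+1$. A quick inspection shows every relation in \eqref{eq: relations in A(SUq(2))} is homogeneous, so this defines a $\Z$-grading on $\A(SU_q(2))$; by the generator description, $\L_n$ is exactly the degree-$n$ homogeneous component, the product satisfies $\L_m\L_n\subseteq\L_{m+n}$, and the isomorphism $\L_m\otimes_{\A(\C P^1_q)}\L_n\cong\L_{m+n}$ of \eqref{eq: line bundle product} is induced by multiplication. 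Moreover, reading \eqref{eq: omega- commuting} through this grading yields the single clean relation $\omega_-\xi=q^{\deg\xi}\xi\omega_-$ for homogeneous $\xi$; in particular $\omega_-\xi=q^{n}\xi\omega_-$ for $\xi\in\L_n$.

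Next I would describe $\Phi_{(n)}$ through multiplication maps. Since $\Omega^{0,1}(\C P^1_q)=\L_{-2}\omega_-$, define $\mu^L_k\colon\L_k\otimes_{\A(\C P^1_q)}\Omega^{0,1}(\C P^1_q)\to\L_{k-2}\omega_-$ by $\xi\otimes\zeta\omega_-\mapsto\xi\zeta\omega_-$ and $\mu^R_k\colon\Omega^{0,1}(\C P^1_q)\otimes_{\A(\C P^1_q)}\L_k\to\L_{k-2}\omega_-$ by $\zeta\omega_-\otimes\xi\mapsto\zeta\omega_-\xi=q^{k}\zeta\xi\omega_-$. Both are $\A(\C P^1_q)$-bimodule isomorphisms because the $\L$'s are invertible bimodules, and I expect the twisted flip of Lemma \ref{lemma: twist flip map} to be exactly $\Phi_{(k)}=(\mu^R_k)^{-1}\circ\mu^L_k$; equivalently $\Phi_{(k)}$ is characterized by $\mu^R_k\circ\Phi_{(k)}=\mu^L_k$, that is, it is the unique bimodule isomorphism preserving the total product in $\A(SU_q(2))\omega_-$.

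With this in hand, the coherence identity \eqref{eq: associatitity of Phi} reduces to associativity. I would compose both sides with the multiplication isomorphism $\mu\colon\Omega^{0,1}(\C P^1_q)\otimes_{\A(\C P^1_q)}\L_m\otimes_{\A(\C P^1_q)}\L_n\to\L_{m+n-2}\omega_-$, $\zeta\omega_-\otimes\xi\otimes\eta\mapsto\zeta\omega_-\xi\eta$, which is bijective since it is a composite of the multiplication isomorphisms above. For the left-hand side, after identifying $\L_m\otimes_{\A(\C P^1_q)}\L_n\cong\L_{m+n}$ the composite $\mu\circ\Phi_{(m+n)}$ is just $\mu^L_{m+n}$, so an input $\xi\otimes\eta\otimes\zeta\omega_-$ lands on $\xi\eta\zeta\omega_-$. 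For the right-hand side, applying $\id\otimes\Phi_{(n)}$, then $\Phi_{(m)}\otimes\id$, then $\mu$, I would push $\omega_-$ rightward through the degree-$m$ and degree-$n$ factors using $\omega_-\xi=q^{\deg\xi}\xi\omega_-$; the accumulated factor $q^{m+n}$ cancels exactly against the $q^{-m}$ and $q^{-n}$ produced by the two flips, again yielding $\xi\eta\zeta\omega_-$. Since $\mu$ is injective, the two composites agree, hence so do the two maps.

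The main obstacle I anticipate is not the calculation but the bookkeeping linking the abstract flip of Lemma \ref{lemma: twist flip map} to the concrete formula $\Phi_{(k)}=(\mu^R_k)^{-1}\circ\mu^L_k$: one must check that these multiplication maps are well defined on the balanced tensor products and bijective, and that the resulting flip coincides with the one of \cite{khalkhali2011holomorphic}. Once that identification is secured, the tracking of the powers of $q$ is the only delicate point, and the computation above shows they combine precisely to $q^{m+n}$, which is the arithmetic heart of why this hexagon-type identity holds.
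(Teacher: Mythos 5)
Your proposal is correct and follows essentially the same route as the paper, whose entire proof is the one-line observation that the identity follows from $\Omega^{0,1}(\C P^1_q)=\L_{-2}\omega_-$, the commutation relations \eqref{eq: omega- commuting}, and associativity. You have simply fleshed out that argument: packaging \eqref{eq: omega- commuting} as $\omega_-\xi=q^{\deg\xi}\xi\omega_-$, realizing the flips via multiplication in $\A(SU_q(2))$, and checking that the powers of $q$ combine as $q^{m+n}=q^m q^n$ is exactly the content the paper leaves implicit.
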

\begin{proof}
    Since $\Omega^{0,1}(\C P^1_q)= \L_{-2}\omega_-$, \eqref{eq: associatitity of Phi} follows from \eqref{eq: omega- commuting} and the associativity of tensor products.
\end{proof}

\subsection{$\pb$-connections on line bundles}\label{subsection: standard holomorphic structures on line bundles}
\begin{definition}\label{def: left module connection}
    Let $\mathcal{E}$ be a left $\A(\C P^1_q)$-module. A left $\pb$-connection on $\mathcal{E}$ is a linear map $\overline{\nabla}: \mathcal{E}\to \Omega^{0,1}(\C P^1_q)\otimes_{\A(\C P^1_q)}\mathcal{E}$ satisfying the left Leibniz rule
    $$
    \overline{\nabla}(fe)=\pb(f)\otimes_{\A(\C P^1_q)}e+f\overline{\nabla}(e) \text{ for any } f\in \A(\C P^1_q), ~e\in \mathcal{E}.
    $$

    We can define right $\pb$-connections on right $\A(\C P^1_q)$-modules in the same way.
\end{definition}

We will need following definition in later construction:

\begin{definition}\label{def: bimodule connection}[\cite{khalkhali2011holomorphic} Definition 2.11]
    Let $\mathcal{E}$ be  an $\A(\C P^1_q)$-bimodule. A left $\pb$-connection $\overline{\nabla}$  on $\mathcal{E}$ is called a bimodule  $\pb$-connection if there exists an $\A(\C P^1_q)$-bimodule isomorphism $$\sigma(\overline{\nabla}): \mathcal{E}\otimes_{\A(\C P^1_q)}\Omega^{0,1}(\C P^1_q)\overset{\sim}{\to}\Omega^{0,1}(\C P^1_q)\otimes_{\A(\C P^1_q)}\mathcal{E}$$ such that for any $f\in \A(\C P^1_q)$ and $s\in\mathcal{E}$, the following twisted right Leibniz rule holds
    \begin{equation}
        \overline{\nabla}(sf)=\overline{\nabla}(s)f+\sigma(\overline{\nabla})(s\pb(f)).
    \end{equation}
\end{definition}

\begin{definition}\label{def: left holomorphic structure}
    Let $\mathcal{E}$ be a left $\A(\C P^1_q)$-module. A left holomorphic structure on $\mathcal{E}$ is a flat left $\pb$-connection on $\mathcal{E}$, i.e. a left  $\pb$-connection $\overline{\nabla}$ on $\mathcal{E}$ such that $\overline{\nabla}\circ \overline{\nabla}=0$.

    We can define right holomorphic structures on right $\A(\C P^1_q)$-modules in the same way.
\end{definition}

\begin{remark}
    For $\A(\C P^1_q)$-modules, the condition $\overline{\nabla}\circ \overline{\nabla}=0$ is automatically satisfied by dimension reason.
\end{remark}

We can define the standard $\pb$-connection $\overline{\nabla}^{(n)}: \L_n \to \Omega^{0,1}(\C P^1_q)\otimes_{\mathcal{A}(\C P^1_q)}\L_n$. In particular on $\L_0$ the $\pb$-connection $\overline{\nabla}^{(0)}$ coincides with $\pb$.

Observe that for $f\in  \A(\C P^1_q)$ and $s\in \L_n,$ we satisfy the Leibniz Rule \[
\overline{\nabla}^{(n)}(fs)=\pb(f)s+f\overline{\nabla}^{(n)}(s).
\]

According to \cite[Proposition 3.7]{khalkhali2011holomorphic}, the standard $\pb$-connection $\overline{\nabla}^{(n)}$ also satisfies the Leibniz rule with respect to the right multiplication
\begin{equation}\label{eq: right Leibniz rule}
\overline{\nabla}^{(n)}(sf)=\overline{\nabla}^{(n)}(s)f+\Phi_{(n)}(s \otimes\pb(f)),
\end{equation}
where $\Phi_{(n)}$ is the twist flip isomorphism in \eqref{eq: twist flip isomorphism}. In other words on each $\L_n$ the standard $\pb$-connection $\overline{\nabla}^{(n)}$ is a bimodule $\pb$-connection in the sense of Definition \ref{def: bimodule connection}.

The standard $\pb$-connection induces the following cochain complex: \[
0 \xrightarrow{0} \L_n \xrightarrow{\overline{\nabla}^{(n)}} \O \otimes \L_n \xrightarrow{0} 0
\]

\begin{proposition}\label{prop: 0,0-th cohomology for Ln}[\cite{khalkhali2011holomorphic} Theorem 4.4]
With the standard $\pb$-connection, the $(0,0)$-cohomologies on $\L_n$ is given by
\begin{equation}
H^{0,0}_{\overline{\nabla}^{(n)}}(\L_n)=\begin{cases}0, ~n>0\\ \C ^{|n|+1},~ n\leq 0\end{cases}
\end{equation}
\end{proposition}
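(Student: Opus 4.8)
The plan is to identify $H^{0,0}_{\overline{\nabla}^{(n)}}(\L_n)$ with $\ker\overline{\nabla}^{(n)}$, which is immediate from the two-term complex $0\to\L_n\xrightarrow{\overline{\nabla}^{(n)}}\Omega^{0,1}(\C P^1_q)\otimes\L_n\to 0$, and then to compute this kernel. The first step is to record that $a$ and $c$ are $\pb$-holomorphic. Viewing $\overline{\nabla}^{(n)}$ as the restriction of $\pb$ to $\L_n\subset\A(SU_q(2))$ and using $B_-=ac^*$, $B_+=ca^*$, $B_0=cc^*$ together with \eqref{eq: pb on generators}, one solves the Leibniz relations to obtain $\pb a=\pb c=0$ and $\pb a^*=q^{1/2}c\,\omega_-$, $\pb c^*=-q^{-1/2}a\,\omega_-$. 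Hence every monomial in $a$ and $c$ alone is $\pb$-closed. For $n\le 0$ the module $\L_n$ contains the $|n|+1$ generators $\{c^m a^{-n-m}:0\le m\le -n\}$, which are linearly independent and $\pb$-closed, so $\dim\ker\overline{\nabla}^{(n)}\ge |n|+1$; for $n>0$ the right-weight-$(-n)$ subspace contains no nonzero monomial purely in $a,c$ (such monomials have nonnegative weight), so no sections of this form exist. This already suggests the stated dichotomy and reduces the theorem to a matching upper bound.

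For the upper bound I would pass to the Peter--Weyl picture. The complex structure on $\A(SU_q(2))$ is left $SU_q(2)$-covariant, so $\overline{\nabla}^{(n)}$ is a morphism of left $\A(SU_q(2))$-comodules. Decomposing into spin-$l$ isotypic pieces, $\L_n$ (the right-weight-$(-n)$ spectral subspace) is multiplicity-free, $\L_n\cong\bigoplus_l V_l$ with $V_l$ the spin-$l$ corepresentation occurring for each $l$ with $2l\ge|n|$ of the correct parity; likewise, using $\Omega^{0,1}(\C P^1_q)=\L_{-2}\omega_-$ and $\L_{-2}\otimes_{\A(\C P^1_q)}\L_n\cong\L_{n-2}$ from \eqref{eq: line bundle product}, the target is multiplicity-free with spin content $2l\ge|n-2|$. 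By Schur's lemma $\overline{\nabla}^{(n)}$ acts on each $V_l$ as a scalar $\lambda_l$, and is forced to vanish on any $V_l$ occurring in the source but not the target.

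The dichotomy now comes from comparing lowest spins. For $n\le 0$ one has $|n-2|=|n|+2$, so the lowest summand $V_{|n|/2}$ of $\L_n$ is absent from the target and lies wholly in the kernel, contributing $\dim V_{|n|/2}=|n|+1$ (it is precisely $\operatorname{span}\{c^m a^{-n-m}\}$ found above); for $n>0$ one has $|n-2|\le|n|$, so every source spin also occurs in the target. The theorem thus reduces to showing $\lambda_l\neq 0$ whenever $V_l$ occurs in both source and target. This nonvanishing is the main obstacle and the only step requiring genuine computation: by covariance it suffices to exhibit, for each relevant $l$, a single vector of $V_l\subset\L_n$ on which $\pb$ does not vanish, which I would do by taking a convenient weight vector and evaluating $\pb$ through the Leibniz rule and the formulas $\pb a^*=q^{1/2}c\,\omega_-$, $\pb c^*=-q^{-1/2}a\,\omega_-$. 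The subtlety is that, after reordering into the PBW basis $\{c^i(c^*)^j a^k\}\cup\{c^i(c^*)^j(a^*)^k\}$ via \eqref{eq: relations in A(SUq(2))} and \eqref{eq: omega- commuting}, the leading coefficient of $\pb$ on such a vector is a signed combination of $q$-integers arising from differentiating the several $a^*$ and $c^*$ factors, and one must verify that this combination never vanishes for $0<q<1$ (equivalently, that the associated graded of $\pb$ for the filtration by total power of $a^*,c^*$ is injective in positive degrees). This is where $q$-integer identities do the real work, and it is the crux of the argument; everything else is the formal Schur-lemma reduction together with the weight bookkeeping above.
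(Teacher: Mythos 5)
The paper does not prove this proposition at all: it is imported verbatim as \cite[Theorem 4.4]{khalkhali2011holomorphic}, so the only honest comparison is with the argument in that reference, whose strategy (left covariance of $\pb$, multiplicity-free Peter--Weyl decomposition of $\L_n$ into spin-$l$ corepresentations, Schur's lemma reducing $\overline{\nabla}^{(n)}$ to scalars $\lambda_l$) is essentially the one you outline. Your weight bookkeeping is correct: the formulas $\pb a=\pb c=0$, $\pb a^*=q^{1/2}c\,\omega_-$, $\pb c^*=-q^{-1/2}a\,\omega_-$ are consistent with \eqref{eq: pb on generators}, the lowest spin of $\L_n$ is $|n|/2$ while that of $\Omega^{0,1}(\C P^1_q)\otimes\L_n\cong\L_{n-2}\omega_-$ is $|n-2|/2$, and this correctly isolates the $(|n|+1)$-dimensional bottom summand $\operatorname{span}\{c^ma^{-n-m}\}$ for $n\le 0$ and shows no summand drops out for $n>0$.

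The genuine gap is that the entire theorem, beyond this formal reduction, rests on the claim $\lambda_l\neq 0$ for every $V_l$ occurring in both source and target, and you do not prove it --- you explicitly defer it as ``the crux'' and ``where $q$-integer identities do the real work.'' Without that step you have only the inequality $\dim H^{0,0}\ge |n|+1$ for $n\le 0$ and nothing at all for $n>0$; in particular the vanishing $H^{0,0}_{\overline{\nabla}^{(n)}}(\L_n)=0$ for $n>0$, which is the half of the statement this paper actually relies on, is entirely unestablished. Moreover your proposed route to the nonvanishing (reorder a general weight vector into a PBW basis and check that a ``signed combination of $q$-integers'' never cancels) is the hard way and is not obviously going to close: signed combinations of $q$-integers can vanish, so you would still owe a real identity. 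The efficient fix is to compute $\lambda_l$ on a highest- or lowest-weight vector of $V_l$, e.g.\ $(c^*)^m(a^*)^{n-m}$ times a suitable power of $a^*a$ or, cleaner, to use the realization of $\pb$ as the right action of the lowering generator of $U_q(su(2))$, for which the eigenvalue on the relevant weight vector is a single product of $q$-integers $[k]_q$ with $k\ge 1$, manifestly positive for $0<q<1$. Until that computation is actually carried out, the proposal is a plan rather than a proof.
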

In particular $H^{0,0}_{\overline{\nabla}^{(0)}}(\L_0)=\C $.

We also have the following result on $H^{0,1}$
\begin{proposition}\label{prop: 0,1th cohomology for L0}[\cite{d2013geometry} Proposition 7.2]
    With the standard $\pb$-connection, we have
\begin{equation}
H^{0,1}_{\overline{\nabla}^{(0)}}(\L_0)=0.
\end{equation}
\end{proposition}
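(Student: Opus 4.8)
The plan is to reduce the statement to the surjectivity of $\pb$ and then exploit the $\A(SU_q(2))$-covariance of the calculus. Since $\L_0=\A(\C P^1_q)$, we have $\Omega^{0,1}(\C P^1_q)\otimes_{\A(\C P^1_q)}\L_0\cong\Omega^{0,1}(\C P^1_q)$, and because $\overline{\nabla}^{(0)}=\pb$ the complex computing the cohomology is simply $0\to\A(\C P^1_q)\xrightarrow{\ \pb\ }\Omega^{0,1}(\C P^1_q)\to 0$. Thus $H^{0,1}_{\overline{\nabla}^{(0)}}(\L_0)=\Omega^{0,1}(\C P^1_q)/\operatorname{im}\pb$, and the whole assertion is equivalent to the surjectivity of $\pb\colon\A(\C P^1_q)\to\Omega^{0,1}(\C P^1_q)$.

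Next I would invoke the Peter–Weyl decomposition of $\A(SU_q(2))$ into matrix coefficients of the spin-$\ell$ corepresentations, $\ell\in\tfrac12\N$. Viewing $\C P^1_q=SU_q(2)/U(1)$ as a quantum homogeneous space, $\A(\C P^1_q)$ is the right $U(1)$-coinvariant subalgebra, and under the residual left $\A(SU_q(2))$-coaction it decomposes as $\A(\C P^1_q)=\bigoplus_{\ell\geq 0}W_\ell$, with each integer spin $\ell$ occurring exactly once ($\dim W_\ell=2\ell+1$). Similarly $\Omega^{0,1}(\C P^1_q)=\L_{-2}\omega_-$ is the right weight-$2$ part multiplied by the left-invariant form $\omega_-$, so $\Omega^{0,1}(\C P^1_q)=\bigoplus_{\ell\geq 1}W'_\ell$ with each spin $\ell\geq 1$ occurring exactly once. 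Because the calculus is left-covariant and $\omega_-$ is left-invariant, $\pb$ is a morphism of left comodules; Schur's lemma then forces $\pb$ to restrict to maps $\pb_\ell\colon W_\ell\to W'_\ell$ that are each either zero or an isomorphism. The reduction is therefore clean: $\pb$ is surjective if and only if $\pb_\ell\neq 0$ for every $\ell\geq 1$, while the constants $W_0=\C$ are automatically annihilated, recovering $H^{0,0}(\L_0)=\C$.

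To check $\pb_\ell\neq 0$ I would test $\pb$ on a single distinguished generator of $W_\ell$. Computing left weights from $a,c^*$, one sees that $B_-=ac^*$ has extremal left weight, so $B_-^\ell=q^{-\binom{\ell}{2}}a^\ell(c^*)^\ell$ is a highest-weight vector lying entirely in $W_\ell$. Using the Leibniz rule, $\pb(B_-)=-q^{-1/2}a^2\omega_-$, and the fact that $\omega_-$ commutes with $\A(\C P^1_q)$, one gets
\[
\pb(B_-^\ell)=-q^{-1/2}\Big(\sum_{j=0}^{\ell-1}B_-^{\,j}\,a^2\,B_-^{\,\ell-1-j}\Big)\omega_-.
\]
Moving every $c^*$ past the $a$'s via $c^*a=q^{-1}ac^*$, each summand collapses to a positive power of $q$ times the single monomial $a^{\ell+1}(c^*)^{\ell-1}$; since $0<q<1$ these contributions are all positive and cannot cancel, so $\pb(B_-^\ell)$ is a nonzero multiple of $a^{\ell+1}(c^*)^{\ell-1}\omega_-$. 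Hence $\pb_\ell\neq 0$ for all $\ell\geq 1$, giving surjectivity and the desired vanishing $H^{0,1}_{\overline{\nabla}^{(0)}}(\L_0)=0$.

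I expect the main obstacle to be the representation-theoretic bookkeeping in the middle step rather than the final computation: one must carefully match the left-coaction decompositions of $\A(\C P^1_q)$ and of $\L_{-2}\omega_-$, verify that each relevant spin occurs with multiplicity one, and confirm that $\pb$ is genuinely covariant for this coaction, so that Schur's lemma legitimately reduces the problem to one scalar $\lambda_\ell$ per $\ell$. Once that framework is secured, the nonvanishing of each $\lambda_\ell$ is the short and transparent $q$-monomial calculation above.
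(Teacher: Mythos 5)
The paper offers no proof of this proposition: it is imported wholesale from the reference \cite{d2013geometry} (Proposition 7.2), so strictly speaking there is no internal argument for you to match. Your proposal is an actual proof, and I believe it is correct. The reduction to surjectivity of $\pb$, the multiplicity-one left-comodule decompositions $\A(\C P^1_q)\cong\bigoplus_{\ell\geq 0}V_\ell$ and $\Omega^{0,1}(\C P^1_q)=\L_{-2}\omega_-\cong\bigoplus_{\ell\geq 1}V_\ell$, the left-invariance of $\omega_-$, and the resulting Schur reduction to one scalar $\lambda_\ell$ per integer spin are all standard facts about the Podle\'s sphere, though none of them is recorded in this paper, so a complete write-up would need to cite or prove them (in particular the covariance of $\pb$, which the paper only defines on the generators $B_0,B_\pm$ and extends as a derivation). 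Your nonvanishing check is sound --- each summand $B_-^{\,j}a^2B_-^{\,\ell-1-j}$ is a positive real multiple (a power of $q^{-2}$ rather than of $q$, but positivity is all that matters) of $a^{\ell+1}(c^*)^{\ell-1}\neq 0$ --- and it can be shortened using the paper's own relation \eqref{eq: B0 pb B- B+ commutes}: from $(\pb B_-)B_-=q^2B_-\pb B_-$ one gets $\pb(B_-^\ell)=\tfrac{1-q^{2\ell}}{1-q^2}B_-^{\ell-1}\pb B_-=-q^{-1/2}\tfrac{1-q^{2\ell}}{1-q^2}B_-^{\ell-1}a^2\omega_-\neq 0$ with no reordering of $a$ and $c^*$. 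For comparison, the cited proof of D'Andrea--Landi reaches the same conclusion $L^2$-theoretically, by diagonalizing the Dolbeault Laplacian on each spin block and checking that its eigenvalues on the $(0,1)$-forms are nonzero; your route trades that spectral computation for a purely algebraic Schur argument, which avoids adjoints and the Haar inner product entirely but leans on the same underlying harmonic-analysis decomposition.
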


Since the difference of any two $\pb$-connections is $\A(\C P^1_q)$-linear, any $\pb$-connection $\L_n$ is expressible as $\overline{\nabla}^{(n)} + D$ where $\overline{\nabla}^{(n)}$ is the standard $\pb$-connection on $\L_n$ and $$D \in Hom_{\A(\C P^1_q)}(\L_n, \Omega^{0,1}(\C P^1_q) \otimes_{\A(\C P^1_q)} \L_n).$$

Moreover by Lemma \ref{lemma: twist flip map} and \eqref{eq: line bundle product} such $D$ is realizable through right multiplication by a $(0,1)$ form, so we choose to express holomorphic structures of $\L_n$ on $\C P^1_q$ as
\begin{equation}\label{eq: pb-connections omega}
    \nb^{(n)}(s) := \overline{\nabla}^{(n)} s - \Phi_{(n)}(s\theta).
\end{equation}

\begin{remark}
    $\nb^{(n)}$  is a left $\pb$-connection on $\L_n$, but unlike the standard $\pb$-connection $\overline{\nabla}^{(n)}$, $\nb^{(n)}$ is not a bimodule $\pb$-connection on $\L_n$ in general.
\end{remark}

The following result generalized \cite[Proposition 3.8]{khalkhali2011holomorphic}.

\begin{definition-proposition}\label{def-prop: tensor product connection}
Let $\overline{\nabla}^{(m)}$ be the standard $\pb$-connection on $\L_m$ and $\nb^{(n)}$ be a left $\pb$-connection on $\L_n$, then we define the tensor product $\overline{\nabla}^{(m)}\otimes \nb^{(n)}$ as 
\begin{equation}
    \overline{\nabla}^{(m)}\otimes \nb^{(n)}:=\overline{\nabla}^{(m)}\otimes \id+ (\Phi_{(m)}\otimes \id)\circ (\id\otimes \nb^{(n)}).
\end{equation}
$\overline{\nabla}^{(m)}\otimes \nb^{(n)}$ is a left $\pb$-connection on $\L_{m+n}$. Moreover we have
\begin{equation}\label{eq: tensor product connections equal}
\overline{\nabla}^{(m)}\otimes \nb^{(n)}=\nb^{(m+n)}.
\end{equation}
\end{definition-proposition}
\begin{proof}
    First we check $\overline{\nabla}^{(m)}\otimes \nb^{(n)}$ is well defined. For $s\in \L_m$, $t\in \L_n$ and $f\in \A(\C P^1_q)$, we have 
    $$
    (\overline{\nabla}^{(m)}\otimes \nb^{(n)})(sf\otimes t)=\overline{\nabla}^{(m)}(sf)\otimes t+(\Phi_{(m)}\otimes \id) (sf \otimes \nb^{(n)}(t)).
    $$

    By \eqref{eq: right Leibniz rule} we know 
    $\overline{\nabla}^{(m)}(sf)=\overline{\nabla}^{(m)}(s)f+\Phi_{(m)}(s\otimes \pb(f))$
    hence
      \begin{equation*}
      \begin{split}
    (\overline{\nabla}^{(m)}\otimes \nb^{(n)})(sf\otimes t)=&\overline{\nabla}^{(m)}(s)f\otimes t+\Phi_{(m)}(s\otimes \pb(f))\otimes t+(\Phi_{(m)}\otimes \id) (sf \otimes \nb^{(n)}(t))\\
    =&\overline{\nabla}^{(m)}(s)\otimes ft+(\Phi_{(m)}\otimes \id) (s\otimes \pb(f)\otimes t+s \otimes f\nb^{(n)}(t)).
    \end{split}
    \end{equation*}
    On the other hand
      \begin{equation*}
      \begin{split}
    (\overline{\nabla}^{(m)}\otimes \nb^{(n)})(s\otimes ft)=&\overline{\nabla}^{(m)}(s)\otimes ft+ (\Phi_{(m)}\otimes \id)(s\otimes \nb^{(n)}(ft))\\
    =&\overline{\nabla}^{(m)}(s)\otimes ft+(\Phi_{(m)}\otimes \id)(s\otimes \pb(f)\otimes t+s\otimes f\nb^{(n)}t).
     \end{split}
    \end{equation*}
Therefore $(\overline{\nabla}^{(m)}\otimes \nb^{(n)})(sf\otimes t)=(\overline{\nabla}^{(m)}\otimes \nb^{(n)})(s\otimes ft)$.

Next we check that $\overline{\nabla}^{(m)}\otimes \nb^{(n)}$ is a left $\pb$-connection. For $s\in \L_m$, $t\in \L_n$ and $f\in \A(\C P^1_q)$, we have
  \begin{equation*}
      \begin{split}
    (\overline{\nabla}^{(m)}\otimes \nb^{(n)})(fs\otimes t)=&\overline{\nabla}^{(m)}(fs)\otimes t+(\Phi_{(m)}\otimes \id) (fs \otimes \nb^{(n)}(t))\\
    =& \pb(f)s\otimes t+f\overline{\nabla}^{(m)}(s)\otimes t+(\Phi_{(m)}\otimes \id) (fs \otimes \nb^{(n)}(t)).
     \end{split}
         \end{equation*}
     Since $\Phi_{(m)}$ is an $\A(\C P^1_q)$-bimodule map, we have
     $$(\Phi_{(m)}\otimes \id) (fs \otimes \nb^{(n)}(t))=f(\Phi_{(m)}\otimes \id) (s \otimes \nb^{(n)}(t))$$ hence
$$(\overline{\nabla}^{(m)}\otimes \nb^{(n)})(fs\otimes t)=\pb(f)s\otimes t+f(\overline{\nabla}^{(m)}\otimes \nb^{(n)})(s\otimes t).$$

Lastly we check \eqref{eq: tensor product connections equal}. For  $s\in \L_m$ and $t\in \L_n$ we have
 \begin{equation}\label{eq: tensor connection expansion}
      \begin{split}(\overline{\nabla}^{(m)}\otimes \nb^{(n)})(s\otimes t)=&\overline{\nabla}^{(m)}(s)\otimes t+(\Phi_{(m)}\otimes \id) (s \otimes \nb^{(n)}(t))\\ 
      =&\overline{\nabla}^{(m)}(s)\otimes t+(\Phi_{(m)}\otimes \id) (s \otimes \overline{\nabla}^{(n)}(t)-s\otimes\Phi_{(n)}(t\otimes \theta))\\
      =&\overline{\nabla}^{(m)}(s)\otimes t+(\Phi_{(m)}\otimes \id) (s \otimes \overline{\nabla}^{(n)}(t))-(\Phi_{(m)}\otimes \id)(s\otimes\Phi_{(n)}(t\otimes \theta)).\end{split}
         \end{equation}
By \cite[Proposition 3.8]{khalkhali2011holomorphic} we know that $\overline{\nabla}^{(m)}\otimes \id+ (\Phi_{(m)}\otimes \id)\circ (\id\otimes \overline{\nabla}^{(n)})=\overline{\nabla}^{(m+n)}$. The equality in \eqref{eq: tensor product connections equal} then follows from Lemma \ref{lemma: associativity of Phi} and \eqref{eq: tensor connection expansion}.
\end{proof}

\begin{remark}
$\overline{\nabla}^{(m)}\otimes \nb^{(n)}$ is  flat by dimension reason.
\end{remark}

\begin{remark}
    In general we cannot define the tensor production connection $\overline{\nabla}_{\theta_1}^{(m)}\otimes \overline{\nabla}_{\theta_2}^{(n)}$ of two left $\pb$-connections $\overline{\nabla}_{\theta_1}^{(m)}$ and $\overline{\nabla}_{\theta_2}^{(n)}$.
\end{remark}

\begin{remark}
In the sequel we will usually omit the superscript "$(n)$" in the notation of $\pb$-connections and simply denote it by $\nb$.
\end{remark}

\subsection{$C^*$-completions, $L^2$-completions, and the spectral triple}
As in \cite[Section 3.2]{khalkhali2011holomorphic}, we denote by $\mathcal{C}(SU_q(2))$ the $C^*$-completion of $\mathcal{A}(SU_q(2))$. By definition $\mathcal{C}(SU_q(2))$ is the universal $C^*$-algebra generated by $a$ and $c$ subject to relations in \eqref{eq: relations in A(SUq(2))}.
Moreover, the exists a unique left invariant Haar state $h$ on $\mathcal{C}(SU_q(2))$ such that $h(1)=1$. As a result we can consider  $L^2(SU_q(2))$ via the GNS-construction on $\mathcal{C}(SU_q(2))$.

We can also define the $C^*$-subalgebra of $\mathcal{C}(SU_q(2))$ generated by $B_0$, $B_+$, and $B_-$, which we denote by $\mathcal{C}(\C P^1_q)$; and its $L^2$-completion $L^2(\C P^1_q)$. We consider $\mathcal{C}(\C P^1_q)$ and $L^2(\C P^1_q)$ the algebras of continuous and $L^2$-functions on $\C P^1_q$, 
respectively.

We shall introduce a $C^*$-representation of $\mathcal{C}(\C P^1_q)$. Let $\ell^2$ be the standard separable Hilbert space with orthonormal basis $\{e_n\}_{n\geq 0}$, and $B(\ell^2)$ be the $C^*$-algebra of bounded operators on $\ell^2$.

\begin{proposition}\label{prop: standard representation}[\cite{podles1987quantum} Proposition 4, \cite{aguilar2018podles} Proposition 4.1]
There exists a faithful representation $\pi: \mathcal{C}(\C P^1_q)\to B(\ell^2)$ such that
  \begin{equation}\label{eq: standar representation}
    \begin{split}
        \pi(B_-)(e_n)&=q^{n}\sqrt{1-q^{2n}}e_{n-1};\\
        \pi(B_0)(e_n)&=q^{2n}e_n;\\
        \pi(B_+)(e_n)&=q^{n+1}\sqrt{1-q^{2n+2}}e_{n+1}.
    \end{split}
    \end{equation}

    In particular, the spectrum of $B_0$  is $\{0\} \cup \{q^{2n} | n \in \Z_{\geq 0} \}$.
\end{proposition}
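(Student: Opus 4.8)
The plan is to obtain $\pi$ by restricting a known irreducible representation of the universal $C^*$-algebra $\mathcal{C}(SU_q(2))$. Concretely, on $\ell^2$ with orthonormal basis $\{e_n\}_{n\ge 0}$ I set $\rho(a)e_n=\sqrt{1-q^{2n}}\,e_{n-1}$ (with $e_{-1}:=0$) and $\rho(c)e_n=q^{n}e_n$. Both operators have norm at most $1$, and a direct check on basis vectors shows they satisfy the relations \eqref{eq: relations in A(SUq(2))}; for instance $\rho(a^*a+c^*c)e_n=(1-q^{2n})e_n+q^{2n}e_n=e_n$, and similarly $\rho(aa^*+q^2cc^*)e_n=e_n$. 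Since $\mathcal{C}(SU_q(2))$ is by definition the universal $C^*$-algebra on these relations, $\rho$ extends to a $*$-representation $\rho:\mathcal{C}(SU_q(2))\to B(\ell^2)$. Restricting $\rho$ to the subalgebra $\mathcal{C}(\C P^1_q)$ and evaluating $\rho(B_0)=\rho(cc^*)$, $\rho(B_-)=\rho(ac^*)$, $\rho(B_+)=\rho(ca^*)$ on the basis reproduces exactly the formulas \eqref{eq: standar representation}; this defines the desired $\pi$. The adjoint relation $\pi(B_+)^*=\pi(B_-)$ is then automatic, as $\rho$ is a $*$-homomorphism and $B_+^*=B_-$.

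Next I would compute the spectrum of $B_0$. The operator $\pi(B_0)$ is diagonal with eigenvalues $q^{2n}$, and since $q^{2n}\to 0$ it is in fact compact; its point spectrum is $\{q^{2n}:n\ge 0\}$, and because the spectrum is closed the only further point is the accumulation value $0$. Hence the spectrum of $\pi(B_0)$ is $\{0\}\cup\{q^{2n}:n\ge 0\}$, and once $\pi$ is shown to be faithful this coincides with the spectrum of $B_0$ inside $\mathcal{C}(\C P^1_q)$.

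Faithfulness is the main point, and the plan is to pin down the image. Each $q^{2n}$ is an isolated point of the spectrum of $\pi(B_0)$, so $\chi_{\{q^{2n}\}}$ is continuous on that spectrum and continuous functional calculus places the rank-one projection $|e_n\rangle\langle e_n|$ in $\pi(\mathcal{C}(\C P^1_q))$. Composing these projections with suitable powers of $\pi(B_+)$ and $\pi(B_-)$ yields every matrix unit $|e_m\rangle\langle e_n|$, whence $\pi(\mathcal{C}(\C P^1_q))\supseteq \K(\ell^2)$. On the other hand all three generators $\pi(B_0),\pi(B_\pm)$ are themselves compact, their coefficient sequences tending to $0$, so the image is the closed algebra generated by compacts and therefore equals (the unitization of) $\K(\ell^2)$. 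I would then conclude that $\ker\pi$, a closed two-sided ideal, is trivial: it is disjoint from the matrix units on which $\pi$ is manifestly injective, and $\K(\ell^2)$ (resp. its unitization) has no larger proper ideal to absorb it.

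The hardest step is exactly this last structural input. Arguing that the abstract $\mathcal{C}(\C P^1_q)$ is genuinely (the unitization of) $\K(\ell^2)$—equivalently, that $\pi$ is not collapsing a strictly larger algebra—cannot be done by the formal manipulations above and requires the representation theory of the Podle\'s sphere; it is precisely here that I would invoke \cite{podles1987quantum} and \cite{aguilar2018podles}. Granting that identification, $\pi$ is an isomorphism onto its image, hence faithful, and the spectrum computation of the previous paragraph transfers to $B_0\in\mathcal{C}(\C P^1_q)$, giving $\{0\}\cup\{q^{2n}:n\in\Z_{\geq 0}\}$.
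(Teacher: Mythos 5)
The paper does not prove this proposition at all; it is imported verbatim from the cited references (Podle\'s and Aguilar--Kaad), so there is no internal argument to compare against. Your reconstruction is essentially correct and goes further than the paper does: the explicit representation $\rho(a)e_n=\sqrt{1-q^{2n}}\,e_{n-1}$, $\rho(c)e_n=q^ne_n$ does satisfy the relations \eqref{eq: relations in A(SUq(2))} (your checks of $a^*a+c^*c=1$ and $aa^*+q^2cc^*=1$ are the right ones, and $ac=qca$ follows from the weight shift), it factors through the universal $C^*$-algebra, and its restriction to the generators $B_0=cc^*$, $B_\pm$ reproduces \eqref{eq: standar representation} exactly; the spectrum computation for the compact diagonal operator $\pi(B_0)$ is also correct, and spectral permanence transfers it to $\mathcal{C}(\C P^1_q)$ once faithfulness is known. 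The one soft spot is your closing argument for faithfulness: identifying the \emph{image} as $\C 1+\K(\ell^2)$ (via the isolated-point spectral projections and the matrix-unit construction, which is fine) says nothing by itself about the \emph{kernel} of $\pi$, and the phrase about the kernel being ``disjoint from the matrix units'' is not a proof --- one genuinely needs the classification of irreducible representations of $\mathcal{C}(SU_q(2))$ (or of the Podle\'s sphere), e.g.\ to see that every irreducible representation of $\mathcal{C}(\C P^1_q)$ is either equivalent to $\pi$ or is the character killing $B_0,B_\pm$, and that the latter factors through $\pi$ via the quotient $(\C 1+\K(\ell^2))/\K(\ell^2)\cong\C$. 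You explicitly flag this and defer it to \cite{podles1987quantum} and \cite{aguilar2018podles}, which is exactly where the paper places the entire burden, so your treatment is consistent with (and more informative than) the paper's.
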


\begin{remark}
We can further show that $\mathcal{C}(\C P^1_q)$ is $*$-isomorphic to the $C^*$-subalgebra of $B(\ell^2)$ generated by $1$ and all compact operators. Nevertheless we do not need this fact in our paper.
\end{remark} 

As in \cite[Section 3.3]{khalkhali2011holomorphic}, we can define $\Gamma(\L_n)$  and $L^2(\L_n)$ as spaces of continuous and $L^2$-sections of the line bundle $\L_n$, respectively. It is clear that  $\Gamma(\L_n)$ is a $\mathcal{C}(\C P^1_q)$-bimodule. Similar to \eqref{eq: line bundle product} we have
\begin{equation}
\End_{\mathcal{C}(\C P^1_q)}(\Gamma(\L_n))=\mathcal{C}(\C P^1_q).
\end{equation}

In particular we can consider the Hilbert space $L^2(\Omega^{0,1}(\C P^1_q))$. According to \cite[Section 7.1]{d2013geometry}, the map
$\pb: \A(\C P^1_q)\to \Omega^{0,1}(\C P^1_q)$ has a Hermitian conjugate
\begin{equation}
\pb^{\dagger}:  \Omega^{0,1}(\C P^1_q)\to \A(\C P^1_q).
\end{equation}
Moreover we have the following theorem:

\begin{theorem}\label{thm: spectral triple}[\cite{d2010dirac} Theorem 6.2, see also \cite{das2020dolbeault} Theorem 6.21]
    $(\A(\C P^1_q), L^2(\Omega^{0,\bullet}(\C P^1_q)), D_{\pb})$ forms a spectral triple in the sense of \cite{connes1995noncommutative}, where $D_{\pb}:=\pb+\pb^{\dagger}$.
\end{theorem}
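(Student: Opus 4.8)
The plan is to verify the three defining properties of a spectral triple directly: a bounded $*$-representation of the algebra on $\H=L^2(\Omega^{0,\bullet}(\C P^1_q))$, essential self-adjointness of $D_{\pb}$ together with compactness of its resolvent, and boundedness of the commutators $[D_{\pb},a]$. First I would fix the Hilbert space. Since the antiholomorphic complex has only two terms, $L^2(\Omega^{0,\bullet}(\C P^1_q))=L^2(\C P^1_q)\oplus L^2(\Omega^{0,1}(\C P^1_q))$, which I equip with the $\Z/2$-grading placing functions in degree $0$ and $(0,1)$-forms in degree $1$. The algebra $\A(\C P^1_q)$ acts diagonally by left multiplication; this representation is bounded and extends to the $C^*$-algebra $\mathcal{C}(\C P^1_q)$ of Proposition~\ref{prop: standard representation}, since both summands are $L^2$-completions of left $\A(\C P^1_q)$-modules. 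With respect to this grading $D_{\pb}=\pb+\pb^{\dagger}$ is odd, so the triple will be even, with $\pb$ and its Hermitian conjugate $\pb^{\dagger}$ (whose existence is recorded above, following \cite{d2013geometry}) as the off-diagonal blocks; in particular $D_{\pb}^{2}$ is the diagonal Dolbeault Laplacian $\pb^{\dagger}\pb\oplus\pb\pb^{\dagger}$.

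The heart of the argument is self-adjointness together with compact resolvent, and here I would exploit the $SU_q(2)$-equivariance of the whole construction. The left coaction of $\A(SU_q(2))$ makes $L^2(\C P^1_q)$ and $L^2(\Omega^{0,1}(\C P^1_q))$ into direct sums of finite-dimensional corepresentations (the quantum Peter--Weyl decomposition of $L^2(SU_q(2))$ cut down to the relevant $\L_n$-isotypic sectors), and $\pb$ intertwines these corepresentations. Consequently $D_{\pb}$ preserves each finite-dimensional isotypic block, so on the algebraic Dolbeault complex---which I would take as a core---it is a symmetric operator that decomposes as a direct sum of self-adjoint operators on mutually orthogonal finite-dimensional subspaces; this forces essential self-adjointness. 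Compact resolvent then reduces to showing that the eigenvalues of $|D_{\pb}|$ have finite multiplicity and tend to $\infty$. I would compute the spectra of $\pb^{\dagger}\pb$ and $\pb\pb^{\dagger}$ explicitly on the spherical-harmonic-type basis indexed by the spin label, using the representation \eqref{eq: standar representation}; the eigenvalues come out as $q$-integers, and because $[\ell]_q$ grows like $q^{-\ell}\to\infty$ for $0<q<1$, the resolvent is compact.

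Finally, boundedness of the commutators follows from the Leibniz rule. For $a\in\A(\C P^1_q)$ and $\xi$ in the degree-$0$ part of the algebraic complex, $[\pb,a]\xi=\pb(a\xi)-a\,\pb(\xi)=\pb(a)\,\xi$, so $[\pb,a]$ is left multiplication by the fixed $(0,1)$-form $\pb(a)$, which is bounded on $L^2$ because $\Omega^{0,1}(\C P^1_q)$ acts boundedly through its bimodule structure; taking adjoints shows $[\pb^{\dagger},a]=-[\pb,a^{*}]^{*}$ is bounded as well, whence $[D_{\pb},a]$ extends boundedly to all of $\H$. The main obstacle is the spectral computation in the previous step: one must diagonalize the $q$-deformed Laplacian on each isotypic block and verify the precise growth of its eigenvalues, which is exactly where the analysis of the $q$-number asymptotics enters and where the $q$-deformation could in principle spoil the discreteness of the spectrum.
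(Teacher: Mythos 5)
This statement is not proved in the paper at all: it is imported verbatim from \cite{das2020dolbeault} (Theorem 6.21), so there is no internal argument to compare yours against. That said, your sketch follows what is essentially the standard route taken in that reference and in the earlier literature on Dirac operators over the Podle\'s sphere: represent $\A(\C P^1_q)$ by left multiplication on $L^2(\C P^1_q)\oplus L^2(\Omega^{0,1}(\C P^1_q))$, use left $SU_q(2)$-equivariance of $\pb$ to decompose the Hilbert space into finite-dimensional isotypic blocks preserved by $D_{\pb}$ (giving essential self-adjointness on the algebraic core), compute the spectrum of the Dolbeault Laplacian blockwise to get eigenvalues growing like $q$-integers $[\ell]_q\sim q^{-\ell}\to\infty$ (giving compact resolvent), and obtain bounded commutators from $[\pb,a]=\pb(a)\cdot(-)$ together with $[\pb^{\dagger},a]=-[\pb,a^{*}]^{*}$. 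The one substantive caveat is the one you flag yourself: the entire weight of the theorem rests on the explicit diagonalization of $\pb^{\dagger}\pb$ and $\pb\pb^{\dagger}$ on each isotypic block and the verification that the eigenvalues are nonzero away from the kernel and unbounded, and your proposal only describes this computation rather than carrying it out. As a proof outline it is sound and consistent with how the cited result is actually established; as a complete proof it would need that spectral computation filled in, which is precisely why the paper delegates the statement to \cite{das2020dolbeault} rather than reproving it.
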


\begin{remark}\label{rmk: db closed map}
In particular, the map $\pb: \A(\C P^1_q)\to \Omega^{0,1}(\C P^1_q)$ extends to a closed map
$$\pb: L^2(\C P^1_q)\to L^2(\Omega^{0,1}(\C P^1_q)).$$
\end{remark}

We will need the following result
\begin{proposition}\label{prop: kernel of pb in completion}[\cite[Corollary 4.3]{khalkhali2011holomorphic}]
    There are no nontrivial holomorphic functions in $\Dom(\pb)\cap \mathcal{C}(\C P^1_q)$, i.e. we have $\ker\pb\cap \mathcal{C}(\C P^1_q)=\C $.
\end{proposition}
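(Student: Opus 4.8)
The plan is to deduce the claim from the purely algebraic computation of $H^{0,0}_{\overline{\nabla}^{(0)}}(\L_0)$ in Proposition \ref{prop: 0,0-th cohomology for Ln} by exploiting the quantum group symmetry that is present throughout the construction. In fact I would prove the stronger assertion that the kernel of the closed extension $\pb\colon L^2(\C P^1_q)\to L^2(\Omega^{0,1}(\C P^1_q))$ from Remark \ref{rmk: db closed map} equals $\C$. Since $\mathcal{C}(\C P^1_q)\cap\Dom(\pb)\subset L^2(\C P^1_q)$, the proposition follows immediately from this.

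The first step is to recall the Peter--Weyl decomposition arising from the left coaction of $SU_q(2)$, under which $\A(\C P^1_q)$ is a left coideal subalgebra. Because the Haar state is invariant, this coaction is unitary on $L^2(\C P^1_q)$ and yields an orthogonal decomposition $L^2(\C P^1_q)=\widehat{\bigoplus}_{\ell\ge 0}\H_\ell$ into isotypic components, where each $\H_\ell$ is finite dimensional and spanned by algebraic elements, so that $\H_\ell\subset\A(\C P^1_q)$, with $\H_0=\C$. Writing $P_\ell$ for the orthogonal projection onto $\H_\ell$, I would then verify that $\pb$ is equivariant for this coaction. This is a consequence of the left covariance of the first order differential calculus on $SU_q(2)$ underlying the definition of $\pb$ and of $\Omega^{0,1}(\C P^1_q)=\L_{-2}\omega_-$; concretely it can be checked on the generators $B_0,B_\pm$ from the formulas \eqref{eq: pb on generators} together with the commutation relations \eqref{eq: omega- commuting}. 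Equivariance means exactly that $\pb$ reduces with respect to the isotypic decomposition, so the bounded projections $P_\ell$ commute with the closed operator $\pb$.

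With this in place the argument closes quickly. Let $f\in\Dom(\pb)$ with $\pb f=0$, and decompose $f=\sum_{\ell\ge 0}P_\ell f$. Since $P_\ell$ commutes with the closed operator $\pb$, each component satisfies $P_\ell f\in\Dom(\pb)$ and $\pb(P_\ell f)=P_\ell\,\pb f=0$. But $P_\ell f\in\H_\ell$ is a finite combination of algebraic elements, so here $\pb$ coincides with the algebraic $\pb$, and $P_\ell f$ is an algebraic holomorphic function. By Proposition \ref{prop: 0,0-th cohomology for Ln} in the case $n=0$, where $\overline{\nabla}^{(0)}=\pb$, the only such elements lie in $\H_0=\C$, whence $P_\ell f=0$ for all $\ell\ge 1$. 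Therefore $f=P_0 f\in\C$, as desired.

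I expect the main obstacle to be the second step: establishing that $\pb$ is genuinely equivariant and, more delicately, that its $L^2$-closure still commutes with the spectral projections $P_\ell$. Equivariance on the dense algebraic core $\A(\C P^1_q)$ is a finite verification, but transporting the commutation $\pb P_\ell=P_\ell\pb$ to the closed operator requires the standard reducing-subspace argument: since $P_\ell$ is bounded, preserves the core, and $\pb$ is closed, one obtains $P_\ell\Dom(\pb)\subset\Dom(\pb)$ and $\pb P_\ell=P_\ell\pb$ on all of $\Dom(\pb)$. Once this reduction is justified, the finite dimensionality of each $\H_\ell$ does the remaining work, reducing the continuous (indeed $L^2$) problem to the already established algebraic one.
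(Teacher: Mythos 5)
The paper does not actually prove this proposition; it imports it verbatim as \cite[Corollary 4.3]{khalkhali2011holomorphic}, so there is no internal proof to compare against. Your argument is a sound, self-contained reconstruction, and it is essentially the mechanism behind the cited result: reduce the analytic statement to the algebraic computation $H^{0,0}_{\overline{\nabla}^{(0)}}(\L_0)=\C$ of Proposition \ref{prop: 0,0-th cohomology for Ln} by decomposing $L^2(\C P^1_q)$ into finite-dimensional isotypic components and using equivariance of $\pb$. The steps that carry the real weight are exactly the two you flag: (i) equivariance of $\pb$ under the left coaction, which holds because the underlying calculus on $SU_q(2)$ is left-covariant and $\omega_-$ is a left-invariant form, so that $\Omega^{0,1}(\C P^1_q)=\L_{-2}\omega_-$ inherits the coaction and $\pb$ intertwines it; since $\pb$ obeys the ordinary Leibniz rule and the coaction is an algebra map, checking this on $B_0,B_\pm$ via \eqref{eq: pb on generators} does suffice; and (ii) the passage from the algebraic core to the closure, where your reducing-subspace argument is correct provided you use the projection $P_\ell$ on the domain and its counterpart on $L^2(\Omega^{0,1}(\C P^1_q))$ (you conflate the two notationally, but the graph-limit argument goes through verbatim). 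Two further points worth making explicit: the conclusion $f=P_0f$ identifies $f$ with a constant only in $L^2$, so you need faithfulness of the Haar state (true for $0<q<1$) to conclude $f\in\C$ inside $\mathcal{C}(\C P^1_q)$; and your argument in fact yields the stronger statement $\ker\pb=\C$ on the full $L^2$-domain, which is more than the proposition asserts but is harmless and consistent with Corollary \ref{coro: any connection gives a spectral triple}.
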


For any $\pb$-connection $\overline{\nabla}_{\theta}$ on $\L_0$, we  can define its Hermitian conjugate $\overline{\nabla}_{\theta}^{\dagger}$ and the Dirac operator $D_{\overline{\nabla}_{\theta}}$ in the same way. We have the following corollary:

\begin{corollary}\label{coro: any connection gives a spectral triple}
 For any $\pb$-connection $\overline{\nabla}_{\theta}$ on $\L_0$, $(\A(\C P^1_q), L^2(\Omega^{0,\bullet}(\C P^1_q)), D_{\overline{\nabla}_{\theta}})$ forms a spectral triple. In particular $\ker \overline{\nabla}_{\theta}$ is finite dimensional for any $\pb$-connection $\overline{\nabla}_{\theta}$ on $\L_0$.
\end{corollary}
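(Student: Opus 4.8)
The corollary asserts two things: first, that for any $\pb$-connection $\overline{\nabla}_\theta$ on $\L_0$, the data $(\A(\C P^1_q), L^2(\Omega^{0,\bullet}(\C P^1_q)), D_{\overline{\nabla}_\theta})$ forms a spectral triple; and second, that $\ker\overline{\nabla}_\theta$ is finite-dimensional. The plan is to reduce the general case to the already-established spectral triple for the standard $\pb$-connection in Theorem \ref{thm: spectral triple}. The key observation is that any $\pb$-connection on $\L_0$ differs from the standard one $\overline{\nabla}^{(0)}=\pb$ by a bounded (indeed $\A(\C P^1_q)$-linear) operator. Concretely, writing $\overline{\nabla}_\theta(s)=\pb(s)-\Phi_{(0)}(s\theta)$ as in \eqref{eq: pb-connections omega}, the difference $\overline{\nabla}_\theta-\pb$ is given by right multiplication by the $(0,1)$-form $\theta$, which extends to a bounded operator on the relevant $L^2$-completions since right multiplication by a fixed element of $\Gamma(\Omega^{0,1}(\C P^1_q))$ is bounded.

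First I would make precise that $D_{\overline{\nabla}_\theta}=\overline{\nabla}_\theta+\overline{\nabla}_\theta^\dagger$ differs from $D_{\pb}=\pb+\pb^\dagger$ by a bounded self-adjoint operator $B$, namely the sum of the bounded perturbation and its adjoint. Then I would invoke the standard stability of spectral triples under bounded perturbations: if $(\A, \H, D)$ is a spectral triple and $B$ is a bounded self-adjoint operator such that $[B, \pi(f)]$ is bounded for all $f\in\A$, then $(\A, \H, D+B)$ is again a spectral triple. The three axioms to verify are: (i) the commutator $[D_{\overline{\nabla}_\theta}, \pi(f)]$ is bounded for all $f\in\A(\C P^1_q)$, which follows because $[D_{\pb},\pi(f)]$ is bounded by Theorem \ref{thm: spectral triple} and $[B,\pi(f)]$ is bounded since $B$ is itself built from module maps and bounded multiplications; (ii) $D_{\overline{\nabla}_\theta}$ is self-adjoint, which holds because it is a bounded self-adjoint perturbation of the self-adjoint operator $D_{\pb}$; and (iii) $D_{\overline{\nabla}_\theta}$ has compact resolvent, which is the crucial point.

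For (iii) I would argue that compactness of the resolvent is preserved under bounded perturbation. Since $D_{\pb}$ has compact resolvent (being the Dirac operator of the spectral triple in Theorem \ref{thm: spectral triple}) and $B$ is bounded, the resolvent identity
\begin{equation*}
(D_{\overline{\nabla}_\theta}-\lambda)^{-1}-(D_{\pb}-\lambda)^{-1}=-(D_{\overline{\nabla}_\theta}-\lambda)^{-1}B(D_{\pb}-\lambda)^{-1}
\end{equation*}
expresses the resolvent of $D_{\overline{\nabla}_\theta}$ as a compact operator plus a bounded operator times a compact operator, hence compact; one must first check that $\lambda$ can be chosen in the resolvent set of both operators, which is automatic for $\lambda$ with large imaginary part. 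This establishes that $D_{\overline{\nabla}_\theta}$ has compact resolvent and completes the verification that the triple is a spectral triple.

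The finite-dimensionality of $\ker\overline{\nabla}_\theta$ then follows as a corollary. An element $s\in\ker\overline{\nabla}_\theta$ lies in $\ker D_{\overline{\nabla}_\theta}$ after including the degree-zero part into $L^2(\Omega^{0,\bullet}(\C P^1_q))$; since $D_{\overline{\nabla}_\theta}$ has compact resolvent, its kernel is finite-dimensional, being the eigenspace for the eigenvalue $0$ of an operator whose spectrum is discrete with finite multiplicities. I expect the main obstacle to be the careful bookkeeping in (iii), namely justifying that the perturbation $B$ genuinely extends to a bounded operator on the $L^2$-completion $L^2(\Omega^{0,\bullet}(\C P^1_q))$ and that the resolvent manipulation is valid on the nose rather than just formally; the self-adjointness and boundedness of the commutators are comparatively routine once this is in place.
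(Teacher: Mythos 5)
Your proposal is correct and follows essentially the same route as the paper: both identify $D_{\overline{\nabla}_\theta}-D_{\pb}$ as a bounded self-adjoint perturbation, invoke stability of spectral triples under such perturbations, and deduce finite-dimensionality of the kernel from compact resolvent. The paper states this in three lines, whereas you spell out the verification (commutator boundedness, self-adjointness, and the second resolvent identity for compactness), which is sound but not a different argument.
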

\begin{proof}
    Since $\theta\wedge(-)$ is a bounded operator, the difference between $D_{\pb}$ and $D_{\overline{\nabla}_{\theta}}$ is a bounded self-adjoint operator. As spectral triples are preserved by bounded perturbations, $(\A(\C P^1_q), L^2(\Omega^{0,\bullet}(\C P^1_q)), D_{\overline{\nabla}_{\theta}})$ is still a spectral triple.

    The finite dimensionality of $\ker \overline{\nabla}_{\theta}$ follows from the fact that $D_{\overline{\nabla}_{\theta}}$ has compact resolvent.
\end{proof}

\section{Nontrivial Gauge Equivalency Classes of Holomorphic Structures on line bundles}\label{section: non standard holomorphic structures}

\subsection{Generalities on gauge equivalences of holomorphic structures}\label{subsection: gauge equivalence}
Khalkhali et. al extend the notion of gauge equivalence in the noncommutative case in \cite[Definition 2.9]{khalkhali2011holomorphic}. Two $\pb$-connections $\overline{\nabla}_{\theta_1}, \overline{\nabla}_{\theta_2}$ on  $\L_n$ are said to be gauge equivalent if there exists an invertible element $g \in \End_{\A(\C P^1_q)}(\L_n)$ such that \[
    \overline{\nabla}_{\theta_1} = g^{-1} \circ \overline{\nabla}_{\theta_2}\circ g.
\]

\begin{lemma}\label{lemma: gauge equivalence expanded}
    Two $\pb$-connections $\overline{\nabla}_{\theta_1}, \overline{\nabla}_{\theta_2}$ on  $\L_n$ are  gauge equivalent if and only if there exists an invertible element $g\in \A(\C P^1_q)$ such that
    $$
    \theta_1=g\theta_2 g^{-1}-\pb(g)g^{-1}.
    $$

    In particular, a $\pb$-connection $\overline{\nabla}_{\theta}$ is gauge equivalent to the standard $\pb$-connection $\overline{\nabla}$ if and only if there exists an invertible element $g\in \A(\C P^1_q)$ such that $\pb(g)=g\theta$.
\end{lemma}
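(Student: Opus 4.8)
The plan is to prove the equivalence by directly unwinding the definition of gauge equivalence from Subsection \ref{subsection: gauge equivalence}, using the identification $\End_{\A(\C P^1_q)}(\L_n)=\A(\C P^1_q)$ from \eqref{eq: end of line bundles} and the description of $\pb$-connections via \eqref{eq: pb-connections omega}. By that identification, an element $g\in\End_{\A(\C P^1_q)}(\L_n)$ acts by right multiplication, and $g$ is invertible as an endomorphism precisely when it is invertible in $\A(\C P^1_q)$. So the task reduces to computing, for $s\in\L_n$, the expression $(g^{-1}\circ\overline{\nabla}_{\theta_2}\circ g)(s)$ and comparing it with $\overline{\nabla}_{\theta_1}(s)$.

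First I would write $\overline{\nabla}_{\theta_i}(s)=\overline{\nabla}^{(n)}(s)-\Phi_{(n)}(s\theta_i)$ following \eqref{eq: pb-connections omega}, where $\overline{\nabla}^{(n)}$ is the standard connection. The key computation is to apply $\overline{\nabla}_{\theta_2}$ to $sg$ and use that $\overline{\nabla}^{(n)}$ satisfies the right Leibniz rule \eqref{eq: right Leibniz rule}. Concretely, $\overline{\nabla}^{(n)}(sg)=\overline{\nabla}^{(n)}(s)g+\Phi_{(n)}(s\otimes\pb(g))$, and since $\omega_-$ (hence any $(0,1)$-form, and $\theta_2$) commutes with elements of $\A(\C P^1_q)$ by \eqref{eq: omega- commuting}, the term $\Phi_{(n)}(sg\,\theta_2)$ can be rewritten using the bimodule property of $\Phi_{(n)}$ so as to pull $g$ through. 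After applying $g^{-1}$ (right multiplication) to the result, the standard part $\overline{\nabla}^{(n)}(s)g g^{-1}$ collapses to $\overline{\nabla}^{(n)}(s)$, and collecting the remaining terms yields
\[
(g^{-1}\circ\overline{\nabla}_{\theta_2}\circ g)(s)=\overline{\nabla}^{(n)}(s)-\Phi_{(n)}\!\big(s\,(g\theta_2 g^{-1}-\pb(g)g^{-1})\big).
\]
Matching this against $\overline{\nabla}_{\theta_1}(s)=\overline{\nabla}^{(n)}(s)-\Phi_{(n)}(s\theta_1)$ and using that $\Phi_{(n)}$ is an isomorphism (so the arguments must agree), I obtain the stated criterion $\theta_1=g\theta_2 g^{-1}-\pb(g)g^{-1}$.

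The particular case then follows immediately by taking $\overline{\nabla}_{\theta_2}$ to be the standard connection, i.e. $\theta_2=0$: the criterion becomes $\theta_1=-\pb(g)g^{-1}$, equivalently $\pb(g)=-\theta_1 g$. I would reconcile the sign and ordering with the commutation relations \eqref{eq: omega- commuting} (since $\theta$ commutes with $g$, one may freely write $\theta g=g\theta$), arriving at $\pb(g)=g\theta$ as stated; here I should double-check the sign convention inherited from \eqref{eq: pb-connections omega} to ensure the final form matches.

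The main obstacle I anticipate is the careful bookkeeping around the twisted flip $\Phi_{(n)}$: one must verify that $g$, acting on the right, really does commute through $\Phi_{(n)}$ in the correct way, which relies essentially on $\theta$ commuting with $\A(\C P^1_q)$ via \eqref{eq: omega- commuting} and on $\Phi_{(n)}$ being a genuine bimodule map. A secondary point of care is confirming that invertibility of $g$ as an endomorphism of $\L_n$ corresponds exactly to invertibility in $\A(\C P^1_q)$, which is precisely what \eqref{eq: end of line bundles} guarantees.
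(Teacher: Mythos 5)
Your main computation follows exactly the route of the paper's (very terse) proof: identify $\End_{\A(\C P^1_q)}(\L_n)$ with right multiplication by $\A(\C P^1_q)$ via \eqref{eq: end of line bundles}, expand $(g^{-1}\circ\overline{\nabla}_{\theta_2}\circ g)(s)$ using the right Leibniz rule \eqref{eq: right Leibniz rule} and the parametrization \eqref{eq: pb-connections omega}, and pull $g^{-1}$ through $\Phi_{(n)}$ as a bimodule map. That part is correct and is essentially what the paper does.

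There is, however, a genuine error in your derivation of the ``in particular'' clause. You set $\theta_2=0$, obtain $\pb(g)=-\theta_1 g$, and then claim to fix the sign and ordering because ``$\theta$ commutes with $g$'' by \eqref{eq: omega- commuting}. This is false: \eqref{eq: omega- commuting} only says that $\omega_-$ commutes with $\A(\C P^1_q)$, whereas a general $\theta=\ell\,\omega_-\in\Omega^{0,1}(\C P^1_q)=\L_{-2}\omega_-$ has $\ell\in\L_{-2}$, which does not commute with $\A(\C P^1_q)$ (e.g.\ $a^2B_0=q^4B_0a^2$). Moreover, even if $\theta$ did commute with $g$, you would land on $\pb(g)=-g\theta$, which still has the wrong sign. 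The correct way to get $\pb(g)=g\theta$ is to assign the roles the other way: take $\theta_1=0$ (the standard connection) and $\theta_2=\theta$, so the criterion reads $0=g\theta g^{-1}-\pb(g)g^{-1}$, i.e.\ $\pb(g)=g\theta$ directly. This is legitimate because gauge equivalence is symmetric under $g\mapsto g^{-1}$; equivalently, starting from your $\theta=-\pb(g)g^{-1}$ one can substitute $h=g^{-1}$ and use $\pb(g)g^{-1}=-g\,\pb(g^{-1})$ to recover $\pb(h)=h\theta$. With that one step repaired, the proof is complete and coincides with the paper's argument.
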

\begin{proof}
    By \eqref{eq: end of line bundles} we know that $\End_{\A(\C P^1_q)}(\L_n)=\A(\C P^1_q)$ where the right hand side means right multiplication by elements in $\A(\C P^1_q)$. The result then follows from  \eqref{eq: right Leibniz rule} and \eqref{eq: pb-connections omega}.
\end{proof}

However, the condition that $g\in \A(\C P^1_q)$ in Lemma \ref{lemma: gauge equivalence expanded} is too restrictive as shown in the following example.

\begin{example}\label{example: exp equation for B_0}
Let $\theta=\pb(B_0)$. Using \eqref{eq: B0 pb B- B+ commutes} and induction we can get
\begin{equation}\label{eq: pb of B_0^n}
    \pb(B_0^n)=\sum_{k=0}^{n-1}q^{2k}B_0^{n-1}\pb(B_0)=q^{n-1}[n]_{q}B_0^{n-1}\pb(B_0),
\end{equation}
where $[n]_{q}=\frac{q^{n}-q^{-n}}{q-q^{-1}}$ is the $q$-integer as in \cite[(3.1)]{khalkhali2011holomorphic}. We define $[n]_{q}!:=\prod_{k=1}^n [k]_{q}$ and $[0]_{q}!:=1$. Then we can check that 
\begin{equation}\label{eq: g for B_0}
g:=\sum_{n=0}^{\infty}\frac{B_0^n}{q^{\frac{n(n-1)}{2}}[n]_{q}!}
\end{equation}
satisfies $\pb(g)=g\pb(B_0)$. Notice that Propositioin \ref{prop: standard representation} gives us $\lVert B_0\rVert=1$ hence the series in \eqref{eq: g for B_0} convergies. It is clear that
$$
g^{-1}=\sum_{n=0}^{\infty}\frac{(-1)^n B_0^n}{q^{\frac{n(n-1)}{2}}[n]_{q}!}
$$
hence $g\in \mathcal{C}(\C P^1_q)$ is invertible and $g\in \Dom(\pb)$ and $g^{-1}\in \Dom(\pb)$ but $g\notin \A(\C P^1_q)$.
\end{example}

Inspired by Example \ref{example: exp equation for B_0} we have the following modified definition.

\begin{definition}\label{def: gauge equivalence}
     We call two $\pb$-connections $\overline{\nabla}_{\theta_1}, \overline{\nabla}_{\theta_2}$ on  $\L_n$   gauge equivalent if there exists an invertible element $g\in \mathcal{C}(\C P^1_q)^{\times}\cap \Dom(\pb)$ such that $g^{-1}\in \Dom(\pb)$ and
     $$
         \theta_1=g\theta_2 g^{-1}-\pb(g)g^{-1}.
    $$
     hence
      \begin{equation}\label{eq: gauge equivalence general}
        \pb(g)= g\theta_2 -\theta_1 g.
     \end{equation}

     In particular, a $\pb$-connection $\overline{\nabla}_{\theta}$ on  $\L_n$ is gauge equivalent to the standard $\pb$-connection if there exists an invertible element $g\in \mathcal{C}(\C P^1_q)^{\times}\cap \Dom(\pb)$ such that $g^{-1}\in \Dom(\pb)$ and
     \begin{equation}\label{eq: gauge equivalence to standard general}
        \pb(g)=g\theta.
     \end{equation}
\end{definition}

By Proposition \ref{prop: 0,1th cohomology for L0} we know that $H^{0,1}_{\overline{\nabla}}(\L_0)=0.$ As a result for any $\theta\in \Omega^{0,1}(\C P^1_q)$ there exists an $f\in \A(\C P^1_q)$ such that $\theta=\pb (f)$. Therefore a $\pb$-connection $\overline{\nabla}_{\theta}=\overline{\nabla}_{\pb(f)}$ on  $\L_n$ is gauge equivalent to the standard $\pb$-connection if there exists an invertible element $g\in \mathcal{C}(\C P^1_q)^{\times}\cap \Dom(\pb)$ such that $g^{-1}\in \Dom(\pb)$ and
     \begin{equation}\label{eq: exponential equation}
        \pb(g)=g\pb(f),
     \end{equation}
which is a noncommutative analogue of the exponential equation.

\begin{remark}
Although it seems plausible that $g\in \mathcal{C}(\C P^1_q)^{\times}\cap \Dom(\pb)$ implies $g^{-1}\in \Dom(\pb)$, we do not know whether it is true in general. So we have to add $g^{-1}\in \Dom(\pb)$ to the definition. The authors want to thank Satyajit Guin for pointing out this. 

Also see Corollary \ref{coro: inverse domain d} below for a proof in a special case.
\end{remark}

\begin{remark}
If the algebra was commutative, then $g=\exp(f)$ would give a solution to \eqref{eq: exponential equation}. Hence we call \eqref{eq: exponential equation} the noncommutative exponential equation.
\end{remark}

\begin{remark}
In \cite{polishchuk2006analogues} Polishchuk studied the analogue of \eqref{eq: exponential equation} on noncommutative two-tori.
\end{remark}

The following lemma plays a key role in the contruction of non-standard holomorphic structures:

\begin{lemma}\label{lemma: non zero solution non gauge equivalent}
    If there exists a non-zero non-invertible $h \in \mathcal{C}(\C P^1_q)\cap \Dom(\pb)$ such that $\pb (h) = h \pb (f)$, then there cannot exist an invertible $g$ such that  $g\in \mathcal{C}(\C P^1_q)^{\times}\cap \Dom(\pb)$, and  $g^{-1}\in \Dom(\pb)$, and $\pb (g) = g \pb (f)$.
\end{lemma}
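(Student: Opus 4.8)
The plan is to argue by contradiction. Assume that, alongside the given non-zero non-invertible $h$ with $\pb(h)=h\pb(f)$, there were an invertible $g\in\mathcal{C}(\C P^1_q)^{\times}\cap\Dom(\pb)$ solving the same equation $\pb(g)=g\pb(f)$. Writing $\theta:=\pb(f)$, the idea is to show that the ``quotient'' $hg^{-1}$ is annihilated by $\pb$, so that Proposition \ref{prop: kernel of pb in completion} forces it to be a scalar; this in turn makes $h$ a scalar multiple of the invertible $g$, contradicting the non-invertibility of $h$.

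The heart of the matter is a short Leibniz computation. Since $\pb$ is a derivation on $\L_0=\A(\C P^1_q)$ and extends to a closed operator (Remark \ref{rmk: db closed map}), applying it to $1=gg^{-1}$ gives $\pb(g)g^{-1}+g\,\pb(g^{-1})=0$, whence
\[
\pb(g^{-1})=-g^{-1}\pb(g)g^{-1}=-g^{-1}(g\theta)g^{-1}=-\theta g^{-1}.
\]
Because both defining equations carry $\theta$ on the right, the $\theta$-terms cancel precisely when $g^{-1}$ is placed on the right:
\[
\pb(hg^{-1})=\pb(h)g^{-1}+h\,\pb(g^{-1})=h\theta g^{-1}-h\theta g^{-1}=0.
\]
(Note that the analogous computation for $g^{-1}h$ does \emph{not} cancel, so the one-sided placement is essential.)

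It then remains to feed this into the rigidity of holomorphic functions. The element $hg^{-1}$ lies in $\mathcal{C}(\C P^1_q)\cap\Dom(\pb)$ and satisfies $\pb(hg^{-1})=0$, so Proposition \ref{prop: kernel of pb in completion} yields $hg^{-1}=\lambda\cdot 1$ for some $\lambda\in\C$. If $\lambda=0$ then $h=0$, and if $\lambda\neq 0$ then $h=\lambda g$ is invertible; either alternative contradicts the hypothesis on $h$, which is exactly the asserted non-existence of an invertible $g$. Conceptually, the computation shows that once a single invertible solution $g$ exists, the full solution space of $\pb(x)=x\theta$ collapses to $\C g$, so every non-zero solution is automatically invertible.

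The step that needs the most care is the domain bookkeeping rather than the algebra. Since $\pb$ is only a closed, unbounded operator, one must justify that $g^{-1}\in\Dom(\pb)$ and that the product $hg^{-1}$ again lies in $\Dom(\pb)$ before the Leibniz manipulations are legitimate; for a general closed derivation, products of domain elements need not stay in the domain. I would resolve this using the closedness of $\pb$ together with the fact that $h$, $g$, $g^{-1}$, and the one-form $\theta=\pb(f)$ all act as \emph{bounded} operators on $L^2$: approximating $h$ and $g$ by elements of $\A(\C P^1_q)$ on which $\pb$ acts algebraically, the boundedness of right multiplication by $g^{-1}$ on $L^2(\Omega^{0,1}(\C P^1_q))$ lets one pass to the limit and identify $\pb(hg^{-1})$ with the expression computed above. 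Alternatively, one observes that membership of a gauge transformation in $\Dom(\pb)$ with inverse in $\Dom(\pb)$ is already implicit in Definition \ref{def: gauge equivalence} (this is what makes gauge equivalence symmetric), so that $\pb(g^{-1})$ is legitimately available from the start.
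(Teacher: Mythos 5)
Your proposal is correct and follows essentially the same route as the paper: both arguments show that $a=hg^{-1}$ satisfies $\pb(a)=0$, invoke Proposition \ref{prop: kernel of pb in completion} to conclude $a\in\C$, and derive the contradiction that $h$ is zero or invertible. The only cosmetic difference is that the paper expands $\pb(h)=\pb(ag)$ and cancels the invertible $g$ on the right, thereby never needing the formula $\pb(g^{-1})=-\theta g^{-1}$ (and quietly sidestepping the domain bookkeeping you rightly flag).
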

\begin{proof}
    For the sake of contradiction, let such a $g$ exist. Since $g$ is invertible, we can write   $h = ag$ for $a = h g^{-1}$. Since both $h$ and $g^{-1}$ are in $\Dom(\pb)$, so is $a$.  We then  have
    $$
    \pb(h)=\pb(ag)=\pb(a)g+a\pb(g)=\pb(a)g+ag\pb(f)=\pb(a)g+h\pb(f).
    $$
    Since $\pb (h) = h \pb (f)$, we have \[
     \pb(a) g = 0.
    \]
    Since $g$ is invertible, we then have $\pb(a) = 0$. By Proposition \ref{prop: kernel of pb in completion}, this means that $a\in \C $ is a constant. However, this implies that either $h = 0$ or that $h$ is invertible, a contradiction.
\end{proof}

\begin{proposition}\label{prop: one-one corrspondence gauge equivalence class}
There is a one-to-one correspondence between sets of gauge equivalence classes of holomorphic structures on $\L_m$ and $\L_n$ for any $m$ and $n$.
\end{proposition}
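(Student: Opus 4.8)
The plan is to exhibit a canonical bijection that leaves the connection ``unchanged'' and only shifts the underlying line bundle. The starting point is the parametrization established before \eqref{eq: pb-connections omega}: for every $n$, any $\pb$-connection on $\L_n$ has the form $\overline{\nabla}^{(n)}_{\theta}(s) = \overline{\nabla}^{(n)}(s) - \Phi_{(n)}(s\theta)$ for a uniquely determined $\theta \in \Omega^{0,1}(\C P^1_q)$, the standard connection corresponding to $\theta = 0$. Crucially, the parameter space $\Omega^{0,1}(\C P^1_q)$ is the \emph{same} for every $n$: the identification $\Hom_{\A(\C P^1_q)}(\L_n, \Omega^{0,1}(\C P^1_q)\otimes_{\A(\C P^1_q)}\L_n)\cong \Omega^{0,1}(\C P^1_q)$ coming from Lemma \ref{lemma: twist flip map} and \eqref{eq: line bundle product} does not depend on $n$. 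Moreover, since every $\pb$-connection on a line bundle is automatically flat, holomorphic structures coincide with $\pb$-connections, so it suffices to set up the correspondence at the level of $\pb$-connections and their gauge classes.

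The second, and decisive, observation is that the gauge equivalence relation, once written in terms of the parameter $\theta$, is itself independent of $n$. Indeed, by Lemma \ref{lemma: gauge equivalence expanded} together with Definition \ref{def: gauge equivalence}, the connections $\overline{\nabla}^{(n)}_{\theta_1}$ and $\overline{\nabla}^{(n)}_{\theta_2}$ on $\L_n$ are gauge equivalent precisely when there is an invertible $g \in \mathcal{C}(\C P^1_q)^{\times}\cap \Dom(\pb)$ with
$$
\theta_1 = g\theta_2 g^{-1} - \pb(g)g^{-1},
$$
a condition in which $n$ does not appear (the dependence on $n$ through $\Phi_{(n)}$ and through $\End_{\A(\C P^1_q)}(\L_n) = \A(\C P^1_q)$ has already been absorbed in the proof of that lemma). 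Consequently, the set of gauge equivalence classes of $\pb$-connections on $\L_n$ is, for every $n$, in canonical bijection with the quotient of $\Omega^{0,1}(\C P^1_q)$ by this single, $n$-independent equivalence relation. Defining $\Psi_{m,n}\big([\overline{\nabla}^{(m)}_{\theta}]\big) := [\overline{\nabla}^{(n)}_{\theta}]$ therefore gives a well-defined map on gauge classes, with two-sided inverse $\Psi_{n,m}$, which proves the claim.

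For a more intrinsic description of $\Psi_{m,n}$ I would use the tensor product construction of Definition-Proposition \ref{def-prop: tensor product connection}: tensoring a connection $\overline{\nabla}^{(m)}_{\theta}$ on $\L_m$ with the standard connection $\overline{\nabla}^{(n-m)}$ on $\L_{n-m}$ yields, by \eqref{eq: tensor product connections equal}, exactly $\overline{\nabla}^{(n)}_{\theta}$ on $\L_{n} \cong \L_{n-m}\otimes_{\A(\C P^1_q)}\L_m$, and Lemma \ref{lemma: associativity of Phi} guarantees this operation is compatible with iteration and hence invertible (by tensoring back with $\overline{\nabla}^{(m-n)}$). This realizes $\Psi_{m,n}$ geometrically, shows it preserves the $\theta$-parameter, and makes it automatically respect the gauge relation above. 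The only point requiring care --- and the step I would treat as the main obstacle --- is verifying rigorously that the $\theta$-parametrization of connections and the gauge relation on $\theta$ are genuinely $n$-independent rather than merely formally similar; this is exactly the content that Lemma \ref{lemma: gauge equivalence expanded} packages, so once that lemma is invoked the correspondence is forced.
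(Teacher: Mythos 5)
Your proof is correct and follows essentially the same route as the paper: the paper's own argument invokes Definition-Proposition \ref{def-prop: tensor product connection} together with \eqref{eq: tensor product connections equal} and \eqref{eq: gauge equivalence general}, which is exactly your ``intrinsic'' tensor-product realization of $\Psi_{m,n}$, and your first paragraph merely unpacks why this works (the $\theta$-parametrization and the gauge relation on $\theta$ are $n$-independent). No gaps to report.
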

\begin{proof}
By Proposition \ref{def-prop: tensor product connection}, there exists a  one-to-one correspondence between  holomorphic structures on $\L_m$ and $\L_n$. The compatibility with gauge equivalences follows from \eqref{eq: tensor product connections equal} and \eqref{eq: gauge equivalence general}.
\end{proof}

\subsection{The $C^*$-subalgebra $C^*(1, B_0)$}\label{subsection: C^*(1, B_0)}

Let $C^*(1, B_0)$ denote the unital $C^*$-subalgebra of $\mathcal{C}(\C P^1_q)$ generated by $B_0$.
Since $B_0$ is self-adjoint hence normal,  by continuous functional calculus we have a $*$-isomorphism
\begin{equation}\label{eq: functional calculus B0}
\Psi: C^*(1, B_0)\overset{\sim}{\to}C(sp(B_0)),
\end{equation}
where $C(sp(B_0))$ is the $C^*$-algebra of continuous functions on $sp(B_0)$ the spectrum of $B_0$. In particular $\Psi$ maps $B_0$ to the indentity function $f(x)=x$ on $p(B_0)$.
Recall Proposition \ref{prop: standard representation} tells us
\begin{equation}\label{eq: sp B0}
sp(B_0)=\{0\} \cup \{q^{2n} | n \in \Z_{\geq 0} \}.
\end{equation}
Note that since $sp(B_0)$ only has a single limit point at zero, continuity of a function $f$ on $sp(B_0)$ is equivalent to $\lim_{n \to \infty} f(q^{2n}) = f(0).$ Additionally, an element $f \in C(sp(B_0))$ is invertible iff it never vanishes on the spectrum, as this is the necessary and sufficient condition for $1/f$ being well-defined.
 
We want to study the restriction of $\pb$ to $C^*(1, B_0)$ in more details. First we introduce the following operator.

\begin{definition}\label{def: db}
 Let $\C[x]$ denote the algebra of polynomials.   We define the linear map $\db: \C[x] \to \C[x]$ as 
 \begin{equation}\label{eq: def of db}
 \db(f)(x):=\frac{f(x)-f(q^2x)}{x-q^2x}.
 \end{equation}
\end{definition}

\begin{remark}
The same formula as \eqref{eq: def of db} appeared in \cite[Section 1.15]{koekoek2010hypergeometric}. Nevertheless  analytic properties of $\db$ like Proposition \ref{prop: db is closable} below have not been covered in \cite{koekoek2010hypergeometric} .
\end{remark}

\begin{remark}
The operator $\db$ is not a derivation on $\C[x]$ in the usual sense. Actually we can show that $\db$ satisfies a twisted Leibniz rule as in \cite[Equation (1.15..5)]{koekoek2010hypergeometric}, but we do not need this fact in our paper.
\end{remark}

If we define the dilation operator $m_{c}$ for $c\in \mathbb{R}$ on $\C[x]$ by 
\begin{equation}\label{eq: mc}
    m_c(f)(x):=f(cx),
\end{equation}
then \eqref{def: db} can be rewritten as
\begin{equation}
    \db(f)(x)=\frac{f(x)-m_{q^2}(f)(x)}{x-q^2x}.
\end{equation}
We can consider $\C[x]$ as a subspace of $C(sp(B_0))$ by restricting $f(x)$ to $sp(B_0)$. If $0<c<1$, then we can also extend $m_c$ to a bounded  operator on $C(sp(B_0))$.

\begin{lemma}\label{lemma: db and pb correspond}
The map $\db$ corresponds to $\pb$ under the functional calculus isomorphism \eqref{eq: functional calculus B0}. In more details, for any $f\in \C[x]\subset C(sp(B_0))$, we have
\begin{equation}\label{eq: db and pb correspond}
\pb(\Psi^{-1}(f))=(\Psi^{-1}(\db f))\pb B_0.
\end{equation}
\end{lemma}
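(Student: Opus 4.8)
The plan is to reduce the identity \eqref{eq: db and pb correspond} to the case of monomials and then invoke \eqref{eq: pb of B_0^n}. Both sides of \eqref{eq: db and pb correspond} are $\C$-linear in $f$: the left-hand side because $\pb$ and $\Psi^{-1}$ are linear, and the right-hand side because $\db$, $\Psi^{-1}$, and right multiplication by $\pb B_0$ are all linear. Since the monomials $\{x^n\}_{n\geq 0}$ span $\C[x]$, it suffices to verify \eqref{eq: db and pb correspond} for $f(x)=x^n$.

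First I would pin down the image of monomials under the functional calculus. Since $\Psi$ is the $*$-isomorphism \eqref{eq: functional calculus B0} sending $B_0$ to the coordinate function $x$ on $sp(B_0)$, it is in particular a ring homomorphism, so $\Psi^{-1}(x^n)=B_0^n$. Thus the left-hand side of \eqref{eq: db and pb correspond} at $f=x^n$ is $\pb(B_0^n)$, which by \eqref{eq: pb of B_0^n} equals $\left(\sum_{k=0}^{n-1}q^{2k}\right)B_0^{n-1}\pb B_0=\frac{1-q^{2n}}{1-q^2}B_0^{n-1}\pb B_0$ via the finite geometric series formula.

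Next I would compute the right-hand side directly from Definition \ref{def: db}:
\[
\db(x^n)(x)=\frac{x^n-(q^2x)^n}{x-q^2x}=\frac{1-q^{2n}}{1-q^2}\,x^{n-1},
\]
so that $\Psi^{-1}(\db(x^n))=\frac{1-q^{2n}}{1-q^2}B_0^{n-1}$ and hence $(\Psi^{-1}(\db(x^n)))\pb B_0=\frac{1-q^{2n}}{1-q^2}B_0^{n-1}\pb B_0$. This coincides with the left-hand side computed above, establishing the monomial case (the degenerate case $n=0$ reading $0=0$), and linearity then yields \eqref{eq: db and pb correspond} for all $f\in\C[x]$.

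The only genuine points to check are that the scalar prefactor coming from the geometric sum $\sum_{k=0}^{n-1}q^{2k}$ agrees with the prefactor $\frac{1-q^{2n}}{1-q^2}$ produced by $\db$ — which it does immediately — and that the functional calculus faithfully identifies $B_0^n$ with $x^n$. I expect no serious obstacle here: the substance of the lemma is exactly that the twisted difference operator $\db$ on $C(sp(B_0))$ is the functional-calculus transport of $\pb$, and once the monomial case is set up the proof reduces to this one comparison of coefficients.
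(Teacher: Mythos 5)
Your proposal is correct and follows essentially the same route as the paper's proof: reduce to monomials by linearity, compute $\db(x^n)=\frac{1-q^{2n}}{1-q^2}x^{n-1}$ from Definition \ref{def: db}, match it against $\pb(B_0^n)=\frac{1-q^{2n}}{1-q^2}B_0^{n-1}\pb B_0$ from \eqref{eq: pb of B_0^n}, and use $\Psi(B_0)=x$. No gaps.
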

\begin{proof}
The definition \eqref{eq: def of db} gives \[
\db(x^n) = \frac{1 - q^{2n}}{1 - q^2} x^{n-1},
\]
and $\db(1) = 0$. On the other hand 
\eqref{eq: pb of B_0^n} gives \[
\pb(B_0^n) =\sum_{k=0}^{n-1}q^{2k}B_0^{n-1}\pb(B_0)= \frac{1 - q^{2n}}{1 - q^2} B_0^{n-1} \pb B_0.
\]
Since $\Psi(B_0)=x$, the lemma then follows by linearity.
\end{proof}
By the  Stone-Weierstrass Theorem, $\C[x]$ is a dense subset of $C(sp(B_0))$. However the operator $\db$ is not bounded, for example for $f_n(x)=(1-x)^n$ we have $\|f_n\|=1$ when we take the maximal norm as elements in $C(sp(B_0))$. On the other hand $\|\db(f_n)\|\geq |\db(f_n)(0)|=n$.

Therefore we cannot extend $\db$ to an operator on $C(sp(B_0))$. To further study the analytic properties of $\db$, we introduce the 
  $\I$ operator inspired by \cite[Equation (1.15.7)]{koekoek2010hypergeometric}.

\begin{definition}\label{def: Ibar operator}
  The linear map $\I: \C[x]\to \C[x]$  is defined  by 
  \begin{equation}\label{eq: def of Ibar}
\bar{I}(x^n)=\frac{1-q^2}{1-q^{2n+2}}x^{n+1}
\end{equation}
and $\bar{I}(0)=0$.
\end{definition}

\begin{lemma}\label{lemma: I db inverse}
For all $f \in \C[x]$ we have 
\begin{equation}\label{eq: I db inverse}
\db(\I(f)) = f \text{ and }\I(\db(f)) = f-f(0).
\end{equation}
\end{lemma}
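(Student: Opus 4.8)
The plan is to prove the two identities in \eqref{eq: I db inverse} by checking them on the monomial basis $\{x^n\}_{n\geq 0}$ of $\C[x]$ and then invoking linearity, since both $\db$ and $\I$ are linear maps. The key computations were in fact already recorded in the preceding lemmas: from the proof of Lemma \ref{lemma: db and pb correspond} we have $\db(x^n) = \frac{1-q^{2n}}{1-q^2}x^{n-1}$ for $n\geq 1$ and $\db(1)=0$, while the defining formula \eqref{eq: def of Ibar} gives $\I(x^n) = \frac{1-q^2}{1-q^{2n+2}}x^{n+1}$ for all $n\geq 0$ (and $\I(0)=0$).

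For the first identity $\db(\I(f)) = f$, I would apply $\db$ to $\I(x^n)$. Since $\I(x^n)$ is a scalar multiple of $x^{n+1}$, we get
\begin{equation*}
\db(\I(x^n)) = \frac{1-q^2}{1-q^{2n+2}}\,\db(x^{n+1}) = \frac{1-q^2}{1-q^{2n+2}}\cdot\frac{1-q^{2n+2}}{1-q^2}\,x^n = x^n,
\end{equation*}
so the factors cancel exactly and $\db(\I(x^n))=x^n$ for every $n\geq 0$. By linearity $\db(\I(f))=f$ for all $f\in\C[x]$.

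For the second identity $\I(\db(f)) = f - f(0)$, I would first dispose of the constant case: $\db(1)=0$ gives $\I(\db(1))=0 = 1 - 1$, matching $f - f(0)$. For $n\geq 1$, applying $\I$ to $\db(x^n)$ yields
\begin{equation*}
\I(\db(x^n)) = \frac{1-q^{2n}}{1-q^2}\,\I(x^{n-1}) = \frac{1-q^{2n}}{1-q^2}\cdot\frac{1-q^2}{1-q^{2n}}\,x^n = x^n = x^n - 0,
\end{equation*}
which equals $x^n$ minus the value of $x^n$ at $0$. Writing a general $f = f(0)\cdot 1 + \sum_{n\geq 1} c_n x^n$, linearity gives $\I(\db(f)) = \sum_{n\geq 1} c_n x^n = f - f(0)$, since the constant term is annihilated by $\db$.

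I do not anticipate a genuine obstacle here, as the statement is essentially a bookkeeping verification that $\I$ and $\db$ are mutually inverse on the complement of the constants. The only point requiring minor care is the asymmetry between the two identities: $\db$ has a kernel (the constants), so $\I\circ\db$ cannot be the identity but rather projects away the constant term $f(0)$, whereas $\db\circ\I$ is the identity because $\I$ lands in the span of $\{x^n : n\geq 1\}$ where $\db$ is injective. Making sure the indexing of the $q$-factors lines up correctly in both cancellations is the one place to double-check, but the two displayed computations above confirm the factors telescope exactly.
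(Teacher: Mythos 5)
Your proposal is correct and follows exactly the same route as the paper, which simply states that the identities hold for each monomial $x^n$ by direct computation and then extends by linearity; you have just written out the telescoping of the $q$-factors that the paper leaves implicit. The constant-term bookkeeping for $\I(\db(f)) = f - f(0)$ is also handled correctly.
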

\begin{proof} It By direct computation we can check that \eqref{eq: I db inverse} holds for any $f(x)=x^n$. The general case then follows by linearity.
\end{proof}

\begin{lemma}\label{lemma: expansion of I}
    Given a $f \in \C[x]$ we have  \begin{equation}\label{eq: expansion of I}
    \I(f)(x) = (1-q^2)x \sum_{n=0}^{\infty} q^{2n}(m_{q^{2n}}f)(x),
    \end{equation}
    where $m_{q^{2n}}$ is defined in \eqref{eq: mc}.
\end{lemma}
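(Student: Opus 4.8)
The plan is to reduce to monomials and then sum a geometric series. Since both sides of \eqref{eq: expansion of I} are linear in $f$, and $\C[x]$ is spanned by the monomials $x^m$, $m\geq 0$, it suffices to verify the identity when $f(x)=x^m$.

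First I would compute the effect of the dilation operators on such a monomial. By \eqref{eq: mc} we have $(m_{q^{2n}}x^m)(x)=(q^{2n}x)^m=q^{2nm}x^m$, so the right-hand side of \eqref{eq: expansion of I} becomes
$$(1-q^2)x\sum_{n=0}^{\infty}q^{2n}\,q^{2nm}x^m=(1-q^2)x^{m+1}\sum_{n=0}^{\infty}q^{2n(m+1)}.$$
Next I would evaluate the geometric series. Because $0<q<1$ we have $0<q^{2(m+1)}<1$, hence the series converges and $\sum_{n=0}^{\infty}q^{2n(m+1)}=\frac{1}{1-q^{2m+2}}$. Substituting gives $\frac{1-q^2}{1-q^{2m+2}}x^{m+1}$, which is precisely $\I(x^m)$ by Definition \ref{def: Ibar operator}. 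This establishes the identity for every monomial, and by linearity for every $f\in\C[x]$.

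The only point requiring care is the meaning of the infinite sum on the right-hand side. For a fixed monomial it is an ordinary convergent numerical series for each $x\in sp(B_0)\subset[0,1]$, and since $\|m_{q^{2n}}x^m\|=q^{2nm}\le 1$ on the spectrum, the extra factor $q^{2n}$ forces absolute convergence in the supremum norm of $C(sp(B_0))$; hence the termwise summation above is legitimate and its limit is the genuine polynomial $\I(x^m)$. I do not expect a real obstacle here, as the convergence is controlled by the elementary bound $0<q<1$.

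Should one prefer to avoid the explicit series, an alternative route is to denote by $g$ the right-hand side of \eqref{eq: expansion of I}, verify directly that $g(0)=0$ and $\db g=f$ by applying $\db$ term by term, and then invoke Lemma \ref{lemma: I db inverse} to conclude $\I f=\I(\db g)=g-g(0)=g$. This approach, however, requires justifying the termwise action of the unbounded operator $\db$ on the infinite series, which makes the direct monomial computation the cleaner choice.
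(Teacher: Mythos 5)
Your proof is correct and follows essentially the same route as the paper's: reduce to monomials by linearity and identify $\frac{1}{1-q^{2m+2}}$ with the geometric series $\sum_{n\ge 0}q^{2n(m+1)}$ (the paper simply reads this identity in the other direction, expanding $\I(x^k)$ into the series). Your added remark on norm convergence is fine but not needed beyond the pointwise/termwise computation.
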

\begin{proof}
   For $f(x)=x^k$, \eqref{eq: def of Ibar} gives
   \begin{equation*}
   \begin{split}
   \I(f)(x)=&(1-q^2)x\frac{x^k}{1-q^{2k+2}}\\
   =&(1-q^2)x\sum_{n=0}^{\infty}q^{(2k+2)n}x^k\\
   =&(1-q^2)x\sum_{n=0}^{\infty} q^{2n}(q^{2n}x)^k\\
   =&(1-q^2)x\sum_{n=0}^{\infty}q^{2n}(m_{q^{2n}}f)(x).
   \end{split}
   \end{equation*}
   The general case then follows by linearity.
\end{proof}

\begin{lemma}\label{lemma: bound of I}
    Given a $f \in \C[x]$, we have $\|\I(f)\| \leq \|f\|.$
\end{lemma}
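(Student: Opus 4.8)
The plan is to deduce the bound directly from the series expansion established in Lemma \ref{lemma: expansion of I}, namely
\[
\I(f)(x) = (1-q^2)x \sum_{n=0}^{\infty} q^{2n}(m_{q^{2n}}f)(x) = (1-q^2)x\sum_{n=0}^{\infty} q^{2n} f(q^{2n}x).
\]
Since the relevant norm is the supremum norm on $C(sp(B_0))$, I would fix an arbitrary point $x\in sp(B_0)$ and estimate $|\I(f)(x)|$ termwise. The key structural observation is that the spectrum $sp(B_0)=\{0\}\cup\{q^{2m}\mid m\in\Z_{\geq 0}\}$ is \emph{invariant} under multiplication by $q^{2n}$: if $x=q^{2m}$ then $q^{2n}x=q^{2(n+m)}\in sp(B_0)$, and if $x=0$ then $q^{2n}x=0\in sp(B_0)$. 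Consequently each evaluation point $q^{2n}x$ again lies in $sp(B_0)$, so $|f(q^{2n}x)|\leq \|f\|$ uniformly in $n$.

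With this in hand the estimate is a geometric-series computation. Bounding each term gives
\[
|\I(f)(x)| \leq (1-q^2)|x|\,\|f\| \sum_{n=0}^{\infty} q^{2n} = (1-q^2)|x|\,\|f\|\cdot\frac{1}{1-q^2} = |x|\,\|f\|.
\]
Since every $x\in sp(B_0)$ satisfies $|x|\leq 1$ (as $q^{2m}\leq 1$ for $0<q<1$ and $m\geq 0$), we conclude $|\I(f)(x)|\leq \|f\|$ for all $x\in sp(B_0)$, and taking the supremum over $x$ yields $\|\I(f)\|\leq \|f\|$.

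There is no serious obstacle here once Lemma \ref{lemma: expansion of I} is available; the only points requiring care are the absolute convergence of the series (guaranteed by $0<q<1$ together with the uniform bound $|f(q^{2n}x)|\leq\|f\|$) and the verification that the spectrum is stable under the dilations $m_{q^{2n}}$, which is what allows the single constant $\|f\|$ to dominate every summand. Both are elementary, so the proof reduces to assembling these observations.
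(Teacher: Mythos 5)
Your proof is correct and follows essentially the same route as the paper: both start from the series expansion in Lemma \ref{lemma: expansion of I}, apply the triangle inequality term by term, use that $|x|\leq 1$ on $sp(B_0)$ and that the dilations $m_{q^{2n}}$ do not increase the sup norm, and sum the geometric series. Your explicit check that $sp(B_0)$ is invariant under multiplication by $q^{2n}$ is a point the paper leaves implicit, but it is the same argument.
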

\begin{proof}
   For $f\in \C[x]$, by Lemma \ref{lemma: expansion of I} we have  \[
    \|\I(f)\| = \|(1-q^2) x \sum_{n=0}^\infty q^{2n}(n_{q^{2n}} f) \| \leq   (1-q^2) \| x \|\cdot  \|  \sum_{n=0}^\infty q^{2n}(m_{q^{2n}} f) \| 
    \]
    Since $sp(B_0)=\{0\} \cup \{q^{2n} | n \in \Z_{\geq 0} \}\subset [0,1]$, we have $\|x\| = 1$. Hence \[
    \|\I(f)\| \leq  (1-q^2)  \sum_{n=0}^\infty \|   q^{2n}(m_{q^{2n}} f) \| = (1-q^2)  \sum_{n=0}^\infty q^{2n} \| (m_{q^{2n}} f) \|
    \]
    Now, since $q^{2n}\leq 1$, the dilation $m_{q^{2n}}$ does not increase the norm of $f,$ we have \[
    \|\I(f)\| \leq  (1-q^2)  \sum_{n=0}^\infty q^{2n} \| f \| = \|f\| \cdot \Big( (1-q^2)  \sum_{n=0}^\infty q^{2n} \Big) = \|f\|.
    \] 
\end{proof}

\begin{lemma}\label{lemma: I extend to C(spB0)}
    $\I$ extends to a  bounded map $\I: C(sp(B_0)) \to C(sp(B_0))$. Moreover, \eqref{eq: expansion of I} holds for any $f\in C(sp(B_0))$.
\end{lemma}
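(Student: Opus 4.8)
The plan is to extend $\I$ from the dense polynomial subspace $\C[x]\subset C(sp(B_0))$ to the whole space by a standard density argument, using the uniform bound established in Lemma \ref{lemma: bound of I}. First I would recall from the discussion after Lemma \ref{lemma: db and pb correspond} that $\C[x]$ is dense in $C(sp(B_0))$ by the Stone--Weierstrass theorem. Since Lemma \ref{lemma: bound of I} gives $\|\I(f)\|\leq \|f\|$ for all $f\in\C[x]$, the operator $\I$ is bounded (with operator norm at most $1$) on this dense subspace. A bounded linear operator on a dense subspace of a Banach space extends uniquely to a bounded operator on the whole space, with the same norm bound; applying this to $\I$ yields the desired bounded extension $\I: C(sp(B_0))\to C(sp(B_0))$, which I will continue to denote by $\I$.

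For the second assertion, I would show that the series formula \eqref{eq: expansion of I} persists under this extension. The idea is that both sides of \eqref{eq: expansion of I} are continuous (bounded) functions of $f$ in the $C(sp(B_0))$-norm, and they agree on the dense set $\C[x]$, hence they agree everywhere. Concretely, define the operator $T$ on $C(sp(B_0))$ by
\[
T(f)(x) := (1-q^2)x \sum_{n=0}^{\infty} q^{2n}(m_{q^{2n}}f)(x).
\]
I would verify that $T$ is a well-defined bounded operator: since each dilation $m_{q^{2n}}$ is a bounded operator on $C(sp(B_0))$ with $\|m_{q^{2n}}f\|\leq\|f\|$ (as $q^{2n}\le 1$ maps $sp(B_0)$ into itself), the series $\sum_{n=0}^\infty q^{2n}(m_{q^{2n}}f)$ converges absolutely in the uniform norm, and multiplication by the fixed function $x$ (with $\|x\|=1$) preserves boundedness. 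By Lemma \ref{lemma: expansion of I}, $T$ agrees with $\I$ on $\C[x]$. Since $T$ and the extension of $\I$ are both bounded and agree on a dense subspace, they coincide on all of $C(sp(B_0))$, which gives \eqref{eq: expansion of I} for every $f\in C(sp(B_0))$.

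The only subtlety worth checking is that $T$ indeed lands in $C(sp(B_0))$, i.e. that the uniformly convergent series of continuous functions is continuous and that multiplication by $x$ stays in the algebra; this is immediate since $C(sp(B_0))$ is a Banach algebra and uniform limits of continuous functions are continuous. I expect no real obstacle here: the entire statement is a routine ``bounded operator on a dense subspace extends uniquely'' argument combined with continuity of both sides of the series identity. The one point requiring mild care is confirming that the dilations $m_{q^{2n}}$ preserve $C(sp(B_0))$ and are uniformly bounded, which follows because $q^{2n}\,sp(B_0)\subseteq sp(B_0)\subseteq[0,1]$ so that composition with $x\mapsto q^{2n}x$ sends continuous functions on $sp(B_0)$ to continuous functions on $sp(B_0)$ without increasing the sup-norm.
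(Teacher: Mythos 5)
Your proposal is correct and follows essentially the same route as the paper: density of $\C[x]$ by Stone--Weierstrass plus the norm bound $\|\I(f)\|\leq\|f\|$ from Lemma \ref{lemma: bound of I} gives the bounded extension. Your explicit verification that the series formula \eqref{eq: expansion of I} defines a bounded operator agreeing with $\I$ on the dense subspace is a point the paper leaves implicit, but it is the same argument.
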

\begin{proof}
    By the Stone-Weierstrass theorem, $\C[x]$ is dense in $C(sp(B_0))$. The result then follows from Lemma \ref{lemma: bound of I}.
\end{proof}

\begin{lemma}\label{lemma: I injective}
   The map $\I: C(sp(B_0)) \to C(sp(B_0))$ is injective.
\end{lemma}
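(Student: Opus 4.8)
The plan is to prove injectivity of $\I: C(sp(B_0)) \to C(sp(B_0))$ by exploiting the explicit series formula \eqref{eq: expansion of I}, which Lemma \ref{lemma: I extend to C(spB0)} guarantees holds for every $f \in C(sp(B_0))$. Suppose $\I(f) = 0$; I want to conclude $f = 0$, i.e. $f(q^{2k}) = 0$ for all $k \geq 0$ and $f(0) = 0$. Evaluating the series at a point $x = q^{2m} \in sp(B_0)$ gives
\[
0 = \I(f)(q^{2m}) = (1-q^2)q^{2m}\sum_{n=0}^{\infty} q^{2n} f(q^{2n}\cdot q^{2m}) = (1-q^2)q^{2m}\sum_{n=0}^{\infty} q^{2n} f(q^{2(n+m)}).
\]
Since $q^{2m} \neq 0$, this forces $\sum_{n=0}^{\infty} q^{2n} f(q^{2(n+m)}) = 0$ for every $m \geq 0$.

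The key step is then to extract $f$ from this family of vanishing weighted tail sums. Writing $S_m := \sum_{n=0}^{\infty} q^{2n} f(q^{2(n+m)}) = \sum_{j \geq m} q^{2(j-m)} f(q^{2j})$, I observe that $S_m$ and $S_{m+1}$ differ in a controlled way: indeed $q^2 S_{m+1} = \sum_{j \geq m+1} q^{2(j-m)} f(q^{2j}) = S_m - f(q^{2m})$. Since every $S_m = 0$, this telescoping relation collapses to $f(q^{2m}) = S_m - q^2 S_{m+1} = 0$ for all $m \geq 0$. Finally, continuity of $f$ on $sp(B_0)$ (equivalently $f(0) = \lim_{m\to\infty} f(q^{2m})$, as noted in the text after \eqref{eq: sp B0}) gives $f(0) = 0$ as well, so $f \equiv 0$ on the spectrum and $\I$ is injective.

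I expect the main (and only genuine) obstacle to be justifying the rearrangement/telescoping of the infinite series, i.e. confirming that the manipulation $q^2 S_{m+1} = S_m - f(q^{2m})$ is valid as an identity of convergent series rather than merely formal. This is harmless because the series $\sum_n q^{2n} f(q^{2(n+m)})$ converges absolutely: $f$ is bounded on the compact set $sp(B_0)$ and the weights $q^{2n}$ are summable, so all the reindexing is legitimate. An alternative, slicker route avoiding the telescoping would be to note that $\I$ factors through $\db$ via Lemma \ref{lemma: I db inverse} (which gives $\db \circ \I = \id$ on $\C[x]$); if one could extend $\db \circ \I = \id$ to all of $C(sp(B_0))$ then injectivity would be immediate, but since $\db$ is unbounded and only defined on $\C[x]$ this requires care, so the direct evaluation-and-telescoping argument above is the cleaner path.
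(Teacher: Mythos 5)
Your proof is correct and follows essentially the same route as the paper: both evaluate the series expansion \eqref{eq: expansion of I} at the points $q^{2m}$ of the spectrum, compare the resulting vanishing tail sums at consecutive indices to extract $f(q^{2m})=0$, and then use continuity to conclude $f(0)=0$. Your explicit remark on absolute convergence justifying the telescoping is a harmless (and welcome) addition, not a different argument.
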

\begin{proof}
Let $f\in C(sp(B_0))$ be in the kernel of $\I$. Since $sp(B_0)=\{0\} \cup \{q^{2n} | n \in \Z_{\geq 0} \}$, for any $k\geq 0$ we have $\I(f)(q^{2k})=0$. By \eqref{eq: expansion of I} we have
$$
\I(f)(q^{2k})=(1-q^2)q^{2k} \sum_{n=0}^{\infty} q^{2n}(m_{q^{2n}}f)(q^{2k})=(1-q^2)q^{2k} \sum_{n=0}^{\infty} q^{2n}f(q^{2n+2k}).
$$
Therefore $\I(f)(q^{2k})=0$ implies 
\begin{equation}\label{eq: sum is zero1}
\sum_{n=0}^{\infty} q^{2n}f(q^{2n+2k})=0.
\end{equation}

Notice that we also have $\I(f)(q^{2(k+1)})=0$ hence 
\begin{equation}\label{eq: sum is zero2}\sum_{n=0}^{\infty} q^{2n}f(q^{2n+2(k+1)})=\sum_{n=0}^{\infty} q^{2n}f(q^{2n+2+2k})=0.\end{equation}
Compare \eqref{eq: sum is zero1} and \eqref{eq: sum is zero2} we get
$$f(q^{2k})=0 \text{ for any }k\geq 0.
$$
Since $f$ is continuous, we also get
$$f(0)=\lim_{k\to \infty}f(q^{2k})=0.$$
Hence $f\equiv 0$.
\end{proof}

\begin{proposition}\label{prop: db is closable}
    $\db$ is a closable operator on $C(sp(B_0))$.
\end{proposition}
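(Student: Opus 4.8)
The plan is to verify closability directly from its sequential characterization: $\db$ is closable on $C(sp(B_0))$ precisely when, for every sequence $f_n \in \C[x]$ with $f_n \to 0$ and $\db(f_n) \to g$ in $C(sp(B_0))$, one necessarily has $g = 0$. Equivalently, the closure of the graph of $\db$ must contain no element of the form $(0,g)$ with $g \neq 0$. The whole argument rests on exploiting the bounded operator $\I$ as an approximate inverse of the unbounded operator $\db$, which lets us bypass the unboundedness of $\db$ established in the discussion preceding Definition \ref{def: Ibar operator}.

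The key algebraic input is the identity $\I(\db(f)) = f - f(0)$ from Lemma \ref{lemma: I db inverse}, valid for all $f \in \C[x]$. So I would start by assuming $f_n \in \C[x]$ with $f_n \to 0$ and $\db(f_n) \to g$ in the sup norm on $C(sp(B_0))$, and then apply $\I$ to the sequence $\db(f_n)$. By Lemma \ref{lemma: I db inverse} we have
\[
\I(\db(f_n)) = f_n - f_n(0).
\]
Since point evaluation at $0 \in sp(B_0)$ is continuous with respect to the sup norm and $f_n \to 0$, the scalars $f_n(0)$ tend to $0$; combined with $f_n \to 0$ this gives $f_n - f_n(0) \to 0$ in $C(sp(B_0))$.

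On the other hand, $\I$ is a bounded operator on $C(sp(B_0))$ by Lemma \ref{lemma: I extend to C(spB0)}, hence continuous, so from $\db(f_n) \to g$ we obtain $\I(\db(f_n)) \to \I(g)$. Comparing the two limits forces $\I(g) = 0$. Finally, $\I$ is injective by Lemma \ref{lemma: I injective}, so $g = 0$, which is exactly the closability criterion. I do not expect a genuine obstacle here: the only subtle point is recognizing that the relation $\I \circ \db = \id - \mathrm{ev}_0$ expresses $\db$ through two \emph{bounded} operations (the bounded $\I$ and the bounded functional $\mathrm{ev}_0$), which is precisely what converts the unboundedness of $\db$ into a harmless feature for closability. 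One should take care that the sequential criterion is applied on the domain $\C[x]$ and that all convergences are in the sup norm on $C(sp(B_0))$, so that continuity of $\mathrm{ev}_0$ and of $\I$ both apply.
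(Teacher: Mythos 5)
Your proof is correct and follows essentially the same route as the paper: apply the bounded operator $\I$ to the converging sequence $\db(f_n)$, use the identity $\I(\db(f_n)) = f_n - f_n(0)$ from Lemma \ref{lemma: I db inverse} to conclude $\I(g)=0$, and finish with the injectivity of $\I$ from Lemma \ref{lemma: I injective}. The only (harmless) addition is your explicit remark that $f_n(0)\to 0$ via continuity of evaluation at $0$, which the paper leaves implicit.
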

\begin{proof}
    Recall that $\d$ being closable means that for all $\{f_n\} \in \C[x]$ such that $f_n \to 0$ and $\d f_n \to g,$ for some $g \in C(sp(B_0)),$ then $g = 0.$ 
    
    Now, by Lemma \ref{lemma: I extend to C(spB0)} we know $\I$ is bounded hence $\d f_n \to g$ implies $\I \d f_n \to \I g$. By Lemma \ref{lemma: I db inverse}, $\I \d f_n=f_n - f_n(0)$ hence we have $f_n - f_n(0) \to \I g.$ However, since $f_n \to 0,$  we also have $f_n - f_n(0) \to 0,$ which means that $\I g = 0.$ The injectivity of $\I$ as in Lemma \ref{lemma: I injective} then implies  $g=0$.
\end{proof}

Proposition \ref{prop: db is closable} tells us that we can extend $\db$ to a closed operator on $C(sp(B_0))$.

\begin{remark}
    We can deduce that $\db$ is closable from the fact that $\pb$ is a closed operator and the relation \eqref{eq: db and pb correspond}. We give a direct proof here because the operator $\I$ which is introduced in the proof is important in the proof of Proposition \ref{prop: delta_bar_domain} below.
\end{remark}

\begin{proposition} \label{prop: delta_bar_domain}
    $f\in C(sp(B_0))$ is in the domain of $\d$ if and only if \[
    \frac{f(x)-f(q^2 x)}{x-q^2 x}
    \]
    is a continuous function on $sp(B_0),$ i.e.
    $$
    \lim_{k\to \infty}\frac{f(q^{2k})-f(q^{2k+2})}{q^{2k}-q^{2k+2}} \text{ exists.}
    $$
In this case we have  \begin{equation}
    (\db f)(q^{2k})=\frac{f(q^{2k})-f(q^{2k+2})}{q^{2k}-q^{2k+2}} \text{ and }(\db f)(0)= \lim_{k\to \infty}\frac{f(q^{2k})-f(q^{2k+2})}{q^{2k}-q^{2k+2}}.
\end{equation}
\end{proposition}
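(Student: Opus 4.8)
The plan is to prove the two implications of the equivalence separately, using the operator $\I$ as the main tool, since it is the bounded inverse of $\db$ produced in the proof of Proposition \ref{prop: db is closable}. Throughout I would keep in mind that $f$ lies in $\Dom(\db)$ precisely when there is a sequence $f_n\in\C[x]$ with $f_n\to f$ and $\db f_n\to g$ uniformly on $sp(B_0)$ for some $g\in C(sp(B_0))$, in which case $\db f=g$; and that for a polynomial one has $(\db f)(q^{2k})=\frac{f(q^{2k})-f(q^{2k+2})}{q^{2k}-q^{2k+2}}$ directly from \eqref{eq: def of db}.

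For the ``only if'' direction, suppose $f\in\Dom(\db)$ with approximating polynomials $f_n\to f$ and $\db f_n\to g$. Uniform convergence on $sp(B_0)$ gives pointwise convergence at each spectral point, so for every fixed $k\geq 0$ the quotient $(\db f_n)(q^{2k})=\frac{f_n(q^{2k})-f_n(q^{2k+2})}{q^{2k}-q^{2k+2}}$ converges to $\frac{f(q^{2k})-f(q^{2k+2})}{q^{2k}-q^{2k+2}}$ (the denominator is a fixed nonzero number). Since also $(\db f_n)(q^{2k})\to g(q^{2k})$, the limit $g$ agrees with the difference quotient at every $q^{2k}$. Continuity of $g$ on $sp(B_0)$ then forces $\lim_{k\to\infty}\frac{f(q^{2k})-f(q^{2k+2})}{q^{2k}-q^{2k+2}}=g(0)$ to exist, which is exactly the asserted continuity, and this simultaneously yields the stated formula $\db f=g$.

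For the ``if'' direction, assume the difference quotient extends to a continuous $g\in C(sp(B_0))$, so $g(q^{2k})=\frac{f(q^{2k})-f(q^{2k+2})}{q^{2k}-q^{2k+2}}$ and $g(0)=\lim_k g(q^{2k})$. The key step is to show $\I(g)=f-f(0)$. Using the expansion \eqref{eq: expansion of I}, which by Lemma \ref{lemma: I extend to C(spB0)} is valid on all of $C(sp(B_0))$, I evaluate at $q^{2k}$ and reindex by $m=n+k$ to obtain $\I(g)(q^{2k})=(1-q^2)\sum_{m=k}^{\infty}q^{2m}g(q^{2m})$. Because $q^{2m}-q^{2m+2}=q^{2m}(1-q^2)$, the definition of $g$ gives $(1-q^2)q^{2m}g(q^{2m})=f(q^{2m})-f(q^{2m+2})$, so the series telescopes to $f(q^{2k})-\lim_{M\to\infty}f(q^{2M})=f(q^{2k})-f(0)$, the last equality using continuity of $f$ at $0$. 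Hence $\I(g)$ and $f-f(0)$ agree at every $q^{2k}$, and therefore on all of $sp(B_0)$ by continuity, giving $\I(g)=f-f(0)$.

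To finish I would transfer this back to $\db$ by a polynomial approximation. Choose $g_n\in\C[x]$ with $g_n\to g$ (Stone--Weierstrass) and set $h_n:=\I(g_n)+f(0)\in\C[x]$. Boundedness of $\I$ (Lemma \ref{lemma: I extend to C(spB0)}) gives $\I(g_n)\to\I(g)=f-f(0)$, so $h_n\to f$; and Lemma \ref{lemma: I db inverse} together with $\db(\text{constant})=0$ gives $\db h_n=\db\I(g_n)=g_n\to g$. Since $\db$ is closed, this shows $f\in\Dom(\db)$ with $\db f=g$, completing both the equivalence and the formula. The main obstacle is the telescoping identity $\I(g)=f-f(0)$: the remaining steps are routine uses of uniform convergence and of the established properties of $\I$, but recognizing that the continuity hypothesis is exactly what makes the expansion for $\I(g)$ collapse to $f-f(0)$ is the crux of the argument.
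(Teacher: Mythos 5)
Your proof is correct and follows essentially the same route as the paper: the forward direction by passing to the pointwise limit of the difference quotients of the approximating polynomials, and the converse by approximating $g$ with polynomials $g_n$ and using $f_n=\I(g_n)+f(0)$ together with the boundedness of $\I$ and the identity $\I(g)=f-f(0)$. The only difference is that you explicitly carry out the telescoping computation establishing $\I(g)=f-f(0)$, which the paper merely asserts as something one ``can check'' from \eqref{eq: expansion of I}.
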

\begin{proof}
    Recall that the domain of $\d$ consists of all  functions $f\in C(sp(B_0))$ such that there exists a sequence $f_n\in \C[x]$ such that  $\lim_{n \to \infty} f_n = f$ and $\lim_{n \to \infty} \d f_n$ converges.
    
    Now, if $f\in \Dom (\db)$, let $f_n\in \C[x]$ be a sequence such that $f_n\to f$ and $\db f_n$ converges with limit $\db f$.  
    We know that 
    $$(\db f_n)(x)=\frac{f_n(x)-f_n(q^2 x)}{x-q^2 x}.$$
    Since $f_n\to f$ we get \[
    \frac{f(x)-f(q^2 x)}{x-q^2 x} = \lim_{n \to \infty} \frac{f_n(x)-f_n(q^2 x)}{x-q^2 x}.
    \]
    for any $x\neq 0$. Since $\db f_n$ converges we know $\frac{f(x)-f(q^2 x)}{x-q^2 x}$ is continuous on $sp(B_0)$ and 
    $$
    (\db f)(x)=\frac{f(x)-f(q^2 x)}{x-q^2 x}.
    $$

    On the other hand, if $\frac{f(x)-f(q^2 x)}{x-q^2 x}$ is a continuous function on $sp(B_0)$. Since $\C[x]$ is dense in $C(sp(B_0))$, there exists a sequence $g_n\in \C[x]$ such that
    $$g_n\to \frac{f(x)-f(q^2 x)}{x-q^2 x}$$
    Since $\I$ is a bounded operator on $C(sp(B_0))$ we get
    $$
   \I g_n\to\I \big(\frac{f(x)-f(q^2 x)}{x-q^2 x}\big).
    $$
    By \eqref{eq: expansion of I} we can check
    $$
    \I \big(\frac{f(x)-f(q^2 x)}{x-q^2 x}\big)=f(x)-f(0)
    $$
    therefore $\I g_n\to f-f(0)$. We then define
    $$f_n=\I g_n+f(0).
    $$
    It is then clear that $f_n\to f$ and $\db f_n=\db \I g_n=g_n$ converges with limit $\db f$.
\end{proof}

\begin{corollary}\label{coro: inverse domain d}
    If  $f\in C(sp(B_0))$ is in the domain of $\d$ and $f$ is invertible, then $f^{-1}$ is also in the domain of $\d$.
\end{corollary}
\begin{proof}
From the definition, we know that $f\in C(sp(B_0))$ is invertible if and only if $f(q^{2n})\neq 0$ for each $n\geq 0$ and $f(0)=\lim_{n\to \infty}f(q^{2n})\neq 0$. If we further know that $f$ is in the domain of $\d$, then by Proposition \ref{prop: delta_bar_domain} we have
  $$
    \lim_{k\to \infty}\frac{f(q^{2k})-f(q^{2k+2})}{q^{2k}-q^{2k+2}}= \text{ exists.}
    $$
    Therefore
      \begin{equation*}
      \begin{split}
    &\lim_{k\to \infty}\frac{f^{-1}(q^{2k})-f^{-1}(q^{2k+2})}{q^{2k}-q^{2k+2}} \\
    =& -\lim_{k\to \infty}\frac{1}{f(q^{2k})f(q^{2k+2})}\lim_{k\to \infty}\frac{f(q^{2k})-f(q^{2k+2})}{q^{2k}-q^{2k+2}}\text{ also exists.}
    \end{split}
    \end{equation*}
    Again by Proposition \ref{prop: delta_bar_domain} we know that $f^{-1}$ is in the domain of $\d$.
\end{proof}

\begin{corollary} \label{coro: db and pb correspond general}
    For any $f\in \Dom(\pb)\cap C^*(1, B_0)$ we have $\Psi(f)\in \Dom(\db)$. Moreover we have 
    \begin{equation}
        \pb f=\big (\Psi^{-1}(\db (\Psi f))\big )\pb B_0.
    \end{equation}
    Sometimes we abuse the notation and simply write it as 
        \begin{equation}\label{eq: db and pb correspond general}
        \pb f=(\db ( f))\pb B_0.
    \end{equation}
\end{corollary}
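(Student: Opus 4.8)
The plan is to promote the polynomial identity of Lemma \ref{lemma: db and pb correspond} to all of $\Dom(\pb)\cap C^*(1,B_0)$ by a closure argument. The first step is to arrange polynomial approximants that already lie in $C^*(1,B_0)$. Since $\pb$ is the $L^2$-closure of its restriction to $\A(\C P^1_q)$ (Remark \ref{rmk: db closed map}), any $f\in\Dom(\pb)$ admits $f_k\in\A(\C P^1_q)$ with $f_k\to f$ in $L^2(\C P^1_q)$ and $\pb f_k\to\pb f$ in $L^2(\Omega^{0,1}(\C P^1_q))$. To push the $f_k$ into $C^*(1,B_0)$, I would use the circle action fixed on generators by $a\mapsto a$ and $c\mapsto\lambda c$, so that $B_0\mapsto B_0$, $B_\pm\mapsto\lambda^{\pm1}B_\pm$, and, by \eqref{eq: omega- commuting}, $\omega_-\mapsto\bar\lambda\,\omega_-$. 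A direct check on \eqref{eq: pb on generators} shows this is a family of $*$-automorphisms commuting with $\pb$ and fixing $\pb B_0$. Averaging over $\lambda$ produces a norm-one conditional expectation $E$ onto the fixed-point algebra, which is exactly $C^*(1,B_0)$, carrying $\A(\C P^1_q)$ into the polynomials in $B_0$ and satisfying $E\pb=\pb E$. Replacing $f_k$ by $Ef_k=p_k(B_0)$ then yields polynomial approximants with $p_k(B_0)\to f$ and $\pb p_k(B_0)\to\pb f$ in $L^2$.

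With the approximants in hand, Lemma \ref{lemma: db and pb correspond} gives $\pb p_k(B_0)=\bigl(\Psi^{-1}(\db\,\Psi p_k)\bigr)\,\pb B_0$. Because the Haar-state GNS measure charges every point $q^{2n}$ of $sp(B_0)$, the convergence $p_k(B_0)\to f$ in $L^2$ forces the pointwise convergence $(\Psi p_k)(q^{2n})\to(\Psi f)(q^{2n})$ for each $n$, and hence $(\db\,\Psi p_k)(q^{2n})\to\frac{(\Psi f)(q^{2n})-(\Psi f)(q^{2n+2})}{q^{2n}-q^{2n+2}}=:g(q^{2n})$ pointwise on $sp(B_0)\setminus\{0\}$. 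Thus the candidate for $\db\,\Psi f$ is this difference-quotient function $g$, and by Proposition \ref{prop: delta_bar_domain} the whole statement reduces to showing that $g$ extends continuously to $0$, i.e. that $\lim_n g(q^{2n})$ exists. Granting this, Proposition \ref{prop: delta_bar_domain} yields $\Psi f\in\Dom(\db)$ with $\db\,\Psi f=g$, and passing to the limit in the polynomial identity gives $\pb f=\bigl(\Psi^{-1}(\db\,\Psi f)\bigr)\,\pb B_0$, the asserted formula.

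The bounded inverse $\I$ of Definition \ref{def: Ibar operator} is the natural tool for handling the continuity of $g$. By Lemma \ref{lemma: I db inverse} one has $\I(\db\,\Psi p_k)=\Psi p_k-(\Psi p_k)(0)$, with $\I$ bounded (Lemma \ref{lemma: I extend to C(spB0)}) and injective (Lemma \ref{lemma: I injective}) on $C(sp(B_0))$. The cleanest route is to show that $\db\,\Psi p_k$ is uniformly Cauchy: its uniform limit is then continuous, agrees with $g$ by the pointwise convergence above, and closedness of $\db$ (Proposition \ref{prop: db is closable}) simultaneously certifies $\Psi f\in\Dom(\db)$ and $\db\,\Psi f=g$. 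Conversely, once $g$ is known to be continuous one may simply take $h_k\in\C[x]$ with $h_k\to g$ uniformly and set $\tilde p_k:=(\Psi f)(0)+\I(h_k)$, so that $\db\tilde p_k=h_k\to g$ and $\tilde p_k\to(\Psi f)(0)+\I g=\Psi f$ uniformly, exhibiting $\Psi f\in\Dom(\db)$ directly.

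The main obstacle is precisely this reconciliation of the $L^2$-topology defining $\Dom(\pb)$ with the uniform topology defining $\Dom(\db)$ through Proposition \ref{prop: delta_bar_domain}. The delicate point is that $L^2$-membership of $\pb f$ by itself controls only a weighted $\ell^2$-norm of the numbers $g(q^{2n})$, which a priori does not force $\lim_n g(q^{2n})$ to exist; the existence of the limit must be read off from the stronger fact that $f$ lies in the $L^2$-closure of $\A(\C P^1_q)$, i.e. that $p_k(B_0)$ and $\pb p_k(B_0)$ converge together. I would attack this by computing the $L^2$-metric on $\Omega^{0,1}(\C P^1_q)$ explicitly on the elements $\Psi^{-1}(x^{n})\,\pb B_0$ arising from \eqref{eq: pb on generators}, and analysing the closure of the graph of $\db$ in the resulting norm near the endpoint $0$ of $sp(B_0)$; showing that this closure enforces continuity of $g$ (equivalently, that $\db\,\Psi p_k$ is uniformly Cauchy) is the step I expect to be the crux of the whole argument.
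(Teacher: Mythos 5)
Your proposal does not close, and the gap is exactly the step you flag as the crux: showing that the difference quotient $g(q^{2n})=\frac{(\Psi f)(q^{2n})-(\Psi f)(q^{2n+2})}{q^{2n}-q^{2n+2}}$ has a limit as $n\to\infty$. Within your framework this cannot be rescued, because you read $\Dom(\pb)$ as the domain of the $L^2$-closure from Remark \ref{rmk: db closed map}. In the $L^2$ topology the atom $q^{2n}$ of the spectral measure of $B_0$ carries mass comparable to $q^{2n}$, so the convergence $\pb p_k\to\pb f$ controls only a weighted $\ell^2$ average of the numbers $g(q^{2n})$, exactly as you observe. But this is compatible with $g(q^{2n})$ oscillating between $0$ and $1$: for such data the increments $f(q^{2n})-f(q^{2n+2})$ are summable, so $f$ is continuous, and the truncated functions (with the tail of the increments set to zero) converge to $f$ uniformly with $\pb$-images converging in $L^2$ by dominated convergence, so $f$ lies in the $L^2$-domain of $\pb$; yet $\Psi f\notin\Dom(\db)$ by Proposition \ref{prop: delta_bar_domain}. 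So under the $L^2$ reading the statement is not merely unproved by your argument --- it fails. The paper gives no proof of the corollary because it reads $\Dom(\pb)\cap C^*(1,B_0)$ in the uniform ($C^*$-norm) topology: $f$ is a uniform limit of polynomials $p_k(B_0)$ with $\pb p_k$ uniformly convergent, and then Lemma \ref{lemma: db and pb correspond} converts this approximating data for $\pb$ directly into approximating data for $\db$ (the factor $\pb B_0$ is just multiplication by a fixed element), so $\Psi f\in\Dom(\db)$ and the displayed identity pass to the limit immediately via Proposition \ref{prop: db is closable} and Proposition \ref{prop: delta_bar_domain}, with no further analysis.

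Two smaller remarks. Your circle-action and conditional-expectation device for pushing approximants into $C^*(1,B_0)$ is a genuinely nice observation not present in the paper, but it becomes unnecessary once the uniform topology is used, since $\C[x]$ is already dense in $C(sp(B_0))$ and the graph closure defining $\Dom(\db)$ is taken there. Your fallback route via $\tilde p_k=(\Psi f)(0)+\I(h_k)$ is circular as written: it presupposes that $g$ is continuous at $0$, which is the very point at issue.
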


\subsection{Existence of non-standard holomorphic structures}\label{subsec: existence of non standard holo structures}
We can now tackle our problem of finding an invertible $g$ such that $\pb g = g \pb f,$ in the case of restricting both $g$ and $f$ to $C^*(1, B_0).$  

\begin{proposition} \label{prop: solution g in product form}
    Let $f\in C(sp(B_0))$ be a function contained in $\Dom(\db).$ Then, a solution to $\d g = g \d f$ is given by \begin{equation}\label{eq: def of g}
    g(q^{2n}) = g(1) \prod_{k = 1}^n  (1 - f(q^{2k - 2}) + f(q^{2k})) \text{ for any }n\geq 1,
    \end{equation}
    and \begin{equation}\label{eq: def of g0}
    g(0) = g(1)\prod_{k = 1}^\infty (1 - f(q^{2k - 2}) + f(q^{2k}))
    \end{equation}
   The $g$ defined above is always in $\Dom(\db)$.
\end{proposition}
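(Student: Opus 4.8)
The plan is to turn the operator equation $\db g = g\db f$ into an explicit recurrence along the spectrum and then solve it. By Proposition \ref{prop: delta_bar_domain}, any function $h\in\Dom(\db)$ has $(\db h)(q^{2k}) = \frac{h(q^{2k})-h(q^{2k+2})}{q^{2k}-q^{2k+2}}$. So first I would evaluate $\db g = g\db f$ at $x=q^{2k}$ and clear the common denominator $q^{2k}-q^{2k+2}$; this reduces the equation to $g(q^{2k})-g(q^{2k+2}) = g(q^{2k})(f(q^{2k})-f(q^{2k+2}))$, equivalently $g(q^{2k+2}) = g(q^{2k})(1-f(q^{2k})+f(q^{2k+2}))$. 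Iterating this recurrence from the seed $g(1)$ reproduces exactly the product formula \eqref{eq: def of g}, and since $g$ is required to be continuous on $sp(B_0)$ and $q^{2n}\to 0$, its value at the limit point must be $g(0)=\lim_{n\to\infty}g(q^{2n})$, which is the infinite product \eqref{eq: def of g0}. The recurrence is equivalent to the equation at each point $q^{2k}$, so the derivation is reversible and the displayed $g$ will indeed be a solution once its regularity is checked.

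The step I expect to be the main obstacle is showing that the infinite product in \eqref{eq: def of g0} converges, so that $g(0)$ is well defined and $g$ is continuous at $0$; this is precisely where the hypothesis $f\in\Dom(\db)$ is used. Writing the $k$-th factor as $a_k = 1-f(q^{2k-2})+f(q^{2k})$, I would control its deviation from $1$ using $\db f$: the difference-quotient description gives $f(q^{2k-2})-f(q^{2k}) = q^{2k-2}(1-q^2)(\db f)(q^{2k-2})$, so $a_k-1 = -q^{2k-2}(1-q^2)(\db f)(q^{2k-2})$. Because $f\in\Dom(\db)$, the function $\db f$ lies in $C(sp(B_0))$ and is therefore bounded, say $\|\db f\|\le M$; hence $|a_k-1|\le (1-q^2)M\,q^{2k-2}$ and $\sum_{k\ge 1}|a_k-1|\le M<\infty$. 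Absolute summability of $a_k-1$ yields convergence of the product, and convergence of the product is exactly the assertion that the partial products $g(1)\prod_{k=1}^{n}a_k = g(q^{2n})$ converge to $g(0)$, i.e. that $g$ is continuous at $0$. (At the isolated points $q^{2n}$ continuity is automatic.)

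Finally I would verify $g\in\Dom(\db)$ via the criterion of Proposition \ref{prop: delta_bar_domain}, namely that $\frac{g(q^{2k})-g(q^{2k+2})}{q^{2k}-q^{2k+2}}$ has a limit as $k\to\infty$. By the recurrence defining $g$, this quotient equals $g(q^{2k})(\db f)(q^{2k})$; since $g$ is continuous at $0$ and $\db f$ is continuous, it converges to $g(0)(\db f)(0)$, so the limit exists and Proposition \ref{prop: delta_bar_domain} gives $g\in\Dom(\db)$ with $(\db g)(0)=g(0)(\db f)(0)$. This also closes the solution claim: the equation $\db g = g\db f$ holds at every $q^{2k}$ by construction and extends to the point $0$ by taking the limit, so it holds on all of $sp(B_0)$. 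The only place the full force of $f\in\Dom(\db)$ is invoked is the boundedness of $\db f$, which drives the geometric estimate and hence the convergence of the product; beyond that the argument is elementary manipulation of the recurrence.
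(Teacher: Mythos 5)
Your proposal is correct and follows essentially the same route as the paper's own proof: reduce $\db g = g\,\db f$ to the pointwise recurrence $g(q^{2k+2}) = g(q^{2k})(1 - f(q^{2k}) + f(q^{2k+2}))$, iterate to obtain the product formula, use the boundedness of $\db f$ (from $f\in\Dom(\db)$) to get the geometric bound $|f(q^{2k-2})-f(q^{2k})|\le M(1-q^2)q^{2k-2}$ ensuring convergence of the infinite product and continuity at $0$, and then invoke Proposition \ref{prop: delta_bar_domain} to conclude $g\in\Dom(\db)$ since the difference quotient of $g$ equals the continuous function $g\,\db f$. The only cosmetic difference is that you phrase the convergence via absolute summability of $a_k-1$ while the paper takes logarithms; the underlying estimate is identical.
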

\begin{proof}
    First, recall from Proposition \ref{prop: delta_bar_domain} that $f \in Dom(\db)$ if and only if $\frac{f(x) - f(q^2x)}{1 - q^2x}$ is a continuous function on $sp(B_0),$ and if so, $\d f = \frac{f(x) - f(q^2x)}{1 - q^2x}.$ Then, given such an $f,$ a solution $g$ such that $\d g = g \d f$ is equivalent to a $g$ such that 
    \begin{equation}\label{eq: solve for g 1}
    \frac{g(x) - g(q^2x)}{1 - q^2x} = g(x) \frac{f(x) - f(q^2x)}{1 - q^2x}
    \end{equation}
    Now, since we are only considering $x \in [0,1],$ $1 - q^2x$ is never zero, so we may reduce \eqref{eq: solve for g 1} to 
    $$
    g(x) - g(q^2x) = g(x) (f(x) - f(q^2x)),
     $$
     which gives
    \begin{equation}\label{eq: solve for g 2}
    g(q^2x) = g(x)(1 - f(x) + f(q^2x)).
      \end{equation}
    Now, if we let $x = q^{2n - 2},$ this gives us the recursive formula \[
    g(q^{2n}) = g(q^{2n - 2})(1 - f(q^{2n - 2}) + f(q^{2n})).
    \]
    Thus, if we write $g(1) = c$ for any $c \in \C,$ we obtain \begin{equation}\label{eq: def of g in proof}
    g(q^{2n}) = c \prod_{k = 1}^n (1 - f(q^{2n - 2k}) + f(q^{2n - 2k + 2})) = c \prod_{k = 1}^n (1 - f(q^{2k - 2}) + f(q^{2k}))
    \end{equation}
    
We know $g$ is continuous if and only if $\lim_{n \to \infty} g(q^{2n}) = g(0).$ Since the $g$ is defined pointwise in \eqref{eq: def of g in proof}, we need only set $g(0)$ as the limit to the above expression as $n \to \infty.$ Then, $g$ is continuous if the product \[
    g(0) := c \prod_{k = 1}^\infty (1 - f(q^{2k - 2}) + f(q^{2k}))
    \]
   converges. 
   By taking logarithm, it is easy to see that the above infinite product converges if \[
    \sum_{j=1}^\infty\big( f(q^{2n})- f(q^{2n-2})\big)
    \]
    converges absolutely. 
     Since $f\in \Dom(\db),$ we have \[
    |f(q^{2n})- f(q^{2n-2})| = |(q^{2n} - q^{2n - 2}) \d f(q^{2n - 2}) |
    \]
where $\db f$ is a continuous function on $sp(B_0)$, hence bounded. As a result there exists a number $M$ such that
$$|f(q^{2n})- f(q^{2n-2})|\leq M(1-q^2)q^{2n-2}$$
for all $n$. Hence $ \sum_{j=1}^\infty\big( f(q^{2n})- f(q^{2n-2})\big)$ converges absolutely hence $g$ is continuous.

Now $g$ is continuous and satisfies \eqref{eq: solve for g 1} for any $x=q^{2n}$. By Proposition \ref{prop: delta_bar_domain} the right hand side of \eqref{eq: solve for g 1} is continuous, hence the left hand side, which is  $\frac{g(x) - g(q^2x)}{1 - q^2x}$, is also continuous. Again by Proposition \ref{prop: delta_bar_domain} we know that $g\in \Dom(\db)$.
\end{proof}

\begin{corollary}\label{coro: g invertible}
   The solution $g$ in Proposition \ref{prop: solution g in product form} is invertible if and only if $g(1)\neq 0$ and
    \begin{equation}\label{eq: defective spot of f}
    f(q^{2n}) - f(q^{2n - 2}) \neq 1\text{ for all }n \in \N.\end{equation}
\end{corollary}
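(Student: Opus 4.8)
The plan is to use the functional-calculus identification $C^*(1,B_0)\cong C(sp(B_0))$ together with the criterion recorded in Subsection \ref{subsection: C^*(1, B_0)}: an element of $C(sp(B_0))$ is invertible precisely when it never vanishes on $sp(B_0)=\{0\}\cup\{q^{2n}\mid n\in\Z_{\geq 0}\}$. Thus invertibility of $g$ is equivalent to the conjunction of $g(q^{2n})\neq 0$ for every $n\geq 0$ and $g(0)\neq 0$, and the whole argument reduces to reading off these non-vanishing conditions from the explicit formulas \eqref{eq: def of g} and \eqref{eq: def of g0}.

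First I would treat the points $q^{2n}$. By \eqref{eq: def of g}, $g(q^{2n})$ equals $g(1)$ times the finite product $\prod_{k=1}^n\big(1-f(q^{2k-2})+f(q^{2k})\big)$, where $g(q^{2\cdot 0})=g(1)$ corresponds to the empty product. Since a finite product is nonzero exactly when each factor is nonzero, the family $\{g(q^{2n})\}_{n\geq 0}$ is nowhere zero if and only if $g(1)\neq 0$ and none of the factors $1-f(q^{2k-2})+f(q^{2k})$ vanishes. The vanishing of such a factor is precisely the equality excluded in \eqref{eq: defective spot of f}, so this half already produces both stated conditions.

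The main point is then to show that the remaining requirement $g(0)\neq 0$ imposes nothing new. Here I would invoke the absolute convergence of $\sum_k\lvert f(q^{2k})-f(q^{2k-2})\rvert$ established inside the proof of Proposition \ref{prop: solution g in product form}: writing the factors as $1+c_k$ with $\sum_k\lvert c_k\rvert<\infty$, the standard theory of infinite products shows that $\prod_k(1+c_k)$ converges to a \emph{nonzero} limit if and only if no factor $1+c_k$ is zero. Consequently, once $g(1)\neq 0$ and every factor is nonzero, the product in \eqref{eq: def of g0} is automatically a nonzero number and $g(0)\neq 0$ is forced; conversely, if some factor vanishes then the corresponding $g(q^{2n})$ already vanishes. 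Combining the two halves yields the stated equivalence. I expect this last step to be the only genuine obstacle: one must rule out the possibility that the infinite product \emph{converges to zero} with no individual factor being zero, and it is exactly the absolute convergence from Proposition \ref{prop: solution g in product form} that excludes this, the finite-product part being purely formal.
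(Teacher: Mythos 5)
Your proposal is correct and follows essentially the same route as the paper: reduce invertibility to non-vanishing on $sp(B_0)$, read off the finite-product conditions at the points $q^{2n}$, and use the absolute convergence of $\sum_k\lvert f(q^{2k})-f(q^{2k-2})\rvert$ from Proposition \ref{prop: solution g in product form} to rule out the infinite product in \eqref{eq: def of g0} converging to zero. The paper phrases this last step via the sum of logarithms not diverging to $-\infty$, which is the same argument you invoke through the standard theory of infinite products.
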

\begin{proof}
We know that $g$ is invertible if and only if $g(q^{2n})\neq 0$ for each $n\geq 0$ and $g(0)=\lim_{n\to \infty}g(q^{2n})\neq 0$. If $g(1)=0$ then $g$ is clearly non-invertible. So now we assume $g(1)\neq 0$.
    
    By \eqref{eq: def of g}, we know that $g(q^{2n}) = 0$ for some $n$ if and only if there exists some $k \leq n$ such that $f(q^{2k}) - f(q^{2k - 2}) = 1.$ 
    
    Also, if $ (1 - f(q^{2k - 2}) + f(q^{2k})) \neq 0$ for each $k$, then since 
    $$\sum_{k=1}^{\infty}\log((1 - f(q^{2k - 2}) + f(q^{2k})))
    $$ does not go to $-\infty$ as in the proof of Proposition \ref{prop: solution g in product form}, we know that the infinite product
    $$g(0)= g(1)\prod_{k = 1}^\infty (1 - f(q^{2k - 2}) + f(q^{2k}))$$ is also  not zero. We finished the proof.
\end{proof}

Inspired by Corollary \ref{coro: g invertible} we have the following definition.

\begin{definition}\label{def: defective spot}
    We say that $f \in C^*(1, B_0)$ has a \textit{defective spot} at $n \in \N$ if $$(\Psi f)(q^{2n}) - (\Psi f)(q^{2n - 2}) = 1,$$
    where $\Psi: C^*(1, B_0)\overset{\sim}{\to}C(sp(B_0))$ is the functional calculus isomorphism as in \eqref{eq: functional calculus B0}. 

    We denote the set of defective spots of $f$ by $S_f$.
\end{definition}
\begin{remark}\label{rmk: defective spot finite}
We know that $S_f$ must be a finite subset of $\N$ as $\Psi f\in C(sp(B_0))$ is continuous at $0$.
\end{remark}

Note that $\frac{B_0}{q^{2n-2} - q^{2n}}$ is a function which has a defective spot at $n.$

\begin{theorem}\label{thm: gauge equivalent to trivial criterion}
    Given $f \in  \A(\C P^1_q)\cap C^*(1, B_0),$ there exists an invertible $g\in\mathcal{C}(\C P^1_q)^{\times}\cap \Dom(\pb)$ such that $g^{-1}\in \Dom(\pb)$ and $\pb g = g \pb f$ if and only if $f$ has no defective spot.

    In other words, for $f \in  \A(\C P^1_q)\cap C^*(1, B_0),$ the $\pb$-connection $\nb$ on $\L_n$ with $\theta=\pb f$ is gauge equivalent to the standard $\pb$-connection $\overline{\nabla}$ if and only if $f$ has no defective spot.
\end{theorem}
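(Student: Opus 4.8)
The plan is to identify the gauge-equivalence condition with the noncommutative exponential equation and then transport it, via the functional calculus $\Psi$ of \eqref{eq: functional calculus B0}, to the commutative operator $\db$ on $C(sp(B_0))$, where the recursion has already been made explicit. By Definition \ref{def: gauge equivalence}, $\nb$ with $\theta=\pb f$ is gauge equivalent to $\overline{\nabla}$ exactly when there is an invertible $g\in\mathcal{C}(\C P^1_q)^{\times}\cap\Dom(\pb)$ with $\pb g=g\pb f$. Since $f\in\A(\C P^1_q)\subseteq\Dom(\pb)$ and $f\in C^*(1,B_0)$, Corollary \ref{coro: db and pb correspond general} gives $\Psi f\in\Dom(\db)$ and $\pb f=\Psi^{-1}(\db\Psi f)\,\pb B_0$. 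The first task is to record a transfer principle: for $h\in C^*(1,B_0)$, the scalar equation $\db(\Psi h)=(\Psi h)\,\db(\Psi f)$ implies $h\in\Dom(\pb)$ and $\pb h=h\pb f$. One half of the correspondence is \eqref{eq: db and pb correspond general}; for membership in $\Dom(\pb)$ I would approximate $\Psi h$ by polynomials as in the proof of Proposition \ref{prop: delta_bar_domain}, push them down to $\C[B_0]\subseteq\A(\C P^1_q)$, apply Lemma \ref{lemma: db and pb correspond} termwise, and conclude using closedness of $\pb$ (Remark \ref{rmk: db closed map}). With this in hand the statement reduces to the scalar recursion on $sp(B_0)$.

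For the ``if'' direction, suppose $f$ has no defective spot. I would invoke Proposition \ref{prop: solution g in product form} with normalization $g(1)=1$ to obtain $g\in\Dom(\db)$ solving $\db g=g\db f$ via \eqref{eq: def of g}--\eqref{eq: def of g0}, and then Corollary \ref{coro: g invertible}, whose hypothesis \eqref{eq: defective spot of f} is precisely the absence of defective spots, to conclude $g$ is invertible. The transfer principle turns $g$ into an invertible element of $C^*(1,B_0)^{\times}\cap\Dom(\pb)$ with $\pb g=g\pb f$, which is the desired gauge equivalence.

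For the ``only if'' direction I would argue contrapositively and appeal to Lemma \ref{lemma: non zero solution non gauge equivalent}. Assume $f$ has a defective spot and let $n_0$ be the smallest, so $1-f(q^{2n_0-2})+f(q^{2n_0})=0$. Taking $h$ to be the solution of Proposition \ref{prop: solution g in product form} normalized by $h(1)=1$, the product \eqref{eq: def of g} for $h(q^{2n})$ acquires the vanishing factor at $k=n_0$ once $n\geq n_0$, so $h(q^{2n})=0$ for all $n\geq n_0$ and therefore $h(0)=0$, while $h(1)=1\neq0$. Thus $h$ is a non-zero, non-invertible element of $\Dom(\db)$ solving $\db h=h\db f$; the transfer principle makes it a non-zero, non-invertible $h\in\mathcal{C}(\C P^1_q)\cap\Dom(\pb)$ with $\pb h=h\pb f$. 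Lemma \ref{lemma: non zero solution non gauge equivalent} then forbids any invertible $g\in\mathcal{C}(\C P^1_q)$ solving $\pb g=g\pb f$, which is the contrapositive.

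The main obstacle is the transfer principle of the first paragraph: a solution built on the commutative model $C(sp(B_0))$ must be shown to genuinely land in $\Dom(\pb)$ and to satisfy $\pb h=h\pb f$, not merely its $\db$-shadow. This matters most because in the ``only if'' direction the hypothetical invertible $g$ is allowed to live in the full algebra $\mathcal{C}(\C P^1_q)$ rather than in $C^*(1,B_0)$; the detour through Lemma \ref{lemma: non zero solution non gauge equivalent} is exactly what lets the commutatively constructed non-invertible $h$ obstruct \emph{every} invertible $g$ in the full algebra at once.
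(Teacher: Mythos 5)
Your proposal is correct and follows essentially the same route as the paper: the ``if'' direction via the explicit product solution of Proposition \ref{prop: solution g in product form} and its invertibility criterion Corollary \ref{coro: g invertible}, and the ``only if'' direction by producing the non-zero non-invertible solution from the same construction and invoking Lemma \ref{lemma: non zero solution non gauge equivalent}. Your extra attention to the transfer between $\Dom(\db)$ and $\Dom(\pb)$ is a reasonable elaboration of what the paper handles implicitly through Corollary \ref{coro: db and pb correspond general}, but it does not change the argument.
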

\begin{proof}
    By Corollary \ref {coro: db and pb correspond general} and Corollary \ref{coro: g invertible}, if $f$ has no defective spot, then an invertible solution $g \in\mathcal{C}(\C P^1_q)^{\times}\cap  C^*(1, B_0)\cap \Dom(\pb)$ to $\pb g = g \pb f$ exists. By Corollary \ref{coro: inverse domain d} and  Corollary \ref {coro: db and pb correspond general}, we know that $g^{-1}\in \Dom(\pb)$.
    
    On the other hand, if $f$ has a defective spot, then by Corollary \ref{coro: g invertible}, $\pb g = g \pb f$  has a  non-invertible, nonzero solution. By Lemma \ref{lemma: non zero solution non gauge equivalent}, $\pb g = g \pb f$ cannot have any invertible solution with $g$ and $g^{-1}$ in $\Dom(\pb)$.
\end{proof}

\begin{example}\label{example: non gauge equivalent to zero}
We notice that the element $B_0$ has no defect spot. Actually in Example \ref{example: exp equation for B_0} we found explicitly an invertible element $g$ such that $g^{-1}\in \Dom(\pb)$  and $\pb g=g \pb B_0$.

    On the other hand we consider $f=\frac{B_0}{1-q^2}\in \A(\C P^1_q)$. It is clear that the defective spot $S_f=\{1\}$. By \eqref{eq: pb of B_0^n} we can get
    \begin{equation}
    \pb(B_0^{\infty})=B_0^{\infty}\pb(f)
    \end{equation}
    where 
    $$
    B_0^{\infty}=\lim_{n\to\infty}B_0^n\in C^*(1, B_0).
    $$
    Since $\Psi(B_0)=x$ we have
    $$
    \Psi(B_0^{\infty})(q^{2n})=\begin{cases}1 &n=0\\0 & n\geq 1
    \end{cases}
    $$
    which is a continuous function on $sp(B_0)=\{0\} \cup \{q^{2n} | n \in \Z_{\geq 0} \}$.
    In particular $B_0^{\infty}$ is not invertible. Therefore $\overline{\nabla}_{\pb(\frac{B_0}{1-q^2})}$ is not gauge equivalent to the standard $\pb$-connection $\overline{\nabla}$, which gives a concrete example of non-standard holomorphic structure on $\L_n$.
\end{example}

On the other hand, we have the following affirmative result for $\pb$-connections which are gauge equivalent to the standard ones.

\begin{corollary}\label{coro: less than one half gauge equivalent to zero}
    For any $f\in  \A(\C P^1_q)\cap C^*(1, B_0)$ with $\|f\|<\frac{1}{2}$, the $\pb$-connection $\overline{\nabla}_{\pb f}$ on $\L_n$ is gauge equivalent to the standard $\pb$-connection $\overline{\nabla}$.
\end{corollary}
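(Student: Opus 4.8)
The plan is to deduce this directly from the defective-spot criterion in Theorem \ref{thm: gauge equivalent to trivial criterion}, which reduces the claim to showing that an $f$ of small norm can have no defective spot. First I would invoke the functional calculus isomorphism $\Psi: C^*(1, B_0)\overset{\sim}{\to}C(sp(B_0))$ from \eqref{eq: functional calculus B0}. Since $\Psi$ is a $*$-isomorphism of $C^*$-algebras, it is isometric, so $\|\Psi f\|_{\infty}=\|f\|<\frac{1}{2}$. In particular, for every point $q^{2k}\in sp(B_0)$ we have the pointwise bound $|(\Psi f)(q^{2k})|\leq \|f\|<\frac{1}{2}$.

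Next I would apply the triangle inequality to the difference appearing in Definition \ref{def: defective spot}. For any $n\in\N$,
\[
|(\Psi f)(q^{2n}) - (\Psi f)(q^{2n-2})| \leq |(\Psi f)(q^{2n})| + |(\Psi f)(q^{2n-2})| < \tfrac{1}{2}+\tfrac{1}{2} = 1.
\]
Because this difference has absolute value strictly less than $1$, it can never equal $1$, which is precisely the condition defining a defective spot at $n$. Hence $S_f=\varnothing$, i.e.\ $f$ has no defective spot.

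Finally I would apply Theorem \ref{thm: gauge equivalent to trivial criterion}: since $f\in \A(\C P^1_q)\cap C^*(1, B_0)$ has no defective spot, there exists an invertible $g\in\mathcal{C}(\C P^1_q)^{\times}\cap \Dom(\pb)$ solving $\pb g = g\pb f$, which by Definition \ref{def: gauge equivalence} means $\overline{\nabla}_{\pb f}$ is gauge equivalent to the standard $\pb$-connection $\overline{\nabla}$.

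There is no real obstacle here; the only point that deserves care is justifying that $\|\Psi f\|_{\infty}=\|f\|$, which is immediate from the fact that $*$-homomorphisms between $C^*$-algebras are norm-contractive and isomorphisms are isometric, so that the hypothesis $\|f\|<\frac{1}{2}$ transfers to a uniform pointwise bound on $sp(B_0)$. Everything else is the elementary triangle-inequality estimate above followed by a citation of the established criterion.
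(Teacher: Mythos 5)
Your proof is correct and follows essentially the same route as the paper: the norm bound forces $|(\Psi f)(q^{2n})-(\Psi f)(q^{2n-2})|<1$ for all $n$, so $f$ has no defective spot, and Theorem \ref{thm: gauge equivalent to trivial criterion} finishes the argument. Your version merely spells out the isometry of $\Psi$ and the triangle inequality, which the paper leaves implicit.
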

\begin{proof}
    Since $\|f\|<\frac{1}{2}$, we know that 
    $$|f(q^{2k})-f(q^{2k-2})|<1 \text{ for any }k,$$ hence $f$ cannot have defective spot.
\end{proof}

\subsection{Gauge equivalence between $\pb$-connections}\label{subsec: gauge equivalence C(1, B0)}
We now turn to the question that when $\overline{\nabla}_{\pb f}$ and $\overline{\nabla}_{\pb h}$ are gauge equivalent for $f, h \in \A(\C P^1_q)\cap C^*(1, B_0)$.

This means that the existence of a non-invertible $g$ such that $\pb g = g \pb f + \pb h \cdot g$ does not mean that $f$ and $h$ must lie in different gauge equivalency classes. However, the existence of such an invertible $g$ still implies that $f$ and $h$ are gauge equivalent.

\begin{lemma} \label{lemma: commuting partials in CB0}
    For $g, h \in \Dom(\pb)\cap C^*(1, B_0)$ we have  
    \begin{equation}\label{eq: commuting partials in CB0}
    \pb h \cdot g = (m_{q^2}g) \cdot \pb h,
    \end{equation}
    where $m_{q^2}$ is the dilation map in \eqref{eq: mc} extended to $C^*(1, B_0)$ via the functional calculus isomorphism.
\end{lemma}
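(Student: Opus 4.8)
The plan is to reduce the identity to a single twisting relation for $\pb B_0$ and then exploit the commutativity of $C^*(1,B_0)$. First I would establish, for a polynomial $g=p(B_0)$, the commutation rule
\[
(\pb B_0)\, g = (m_{q^2} g)\,(\pb B_0).
\]
This follows by induction on $\deg p$ from the base case $(\pb B_0)B_0 = q^2 B_0\,\pb B_0$ supplied by Lemma \ref{lemma: B0 pb B-  B+commutes}: assuming $(\pb B_0)B_0^n = q^{2n}B_0^n\,\pb B_0$, multiplying on the right by $B_0$ and reapplying the base case gives $(\pb B_0)B_0^{n+1} = q^{2(n+1)}B_0^{n+1}\,\pb B_0$. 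Since $m_{q^2}(x^n)=q^{2n}x^n$, the relation for all polynomials follows by linearity.

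Next I would extend this relation from polynomials to an arbitrary $g\in C^*(1,B_0)$. Since $\C[x]$ is dense in $C(sp(B_0))$ by Stone--Weierstrass, I choose polynomials $p_j\to g$ in the sup norm. The left and right actions of $\mathcal{C}(\C P^1_q)$ on $L^2(\Omega^{0,1}(\C P^1_q))$ are bounded, and $m_{q^2}$ is bounded on $C(sp(B_0))$ for $0<q<1$ (it does not increase the norm, as used already in Lemma \ref{lemma: bound of I}); hence $(\pb B_0)p_j \to (\pb B_0)g$ and $(m_{q^2}p_j)(\pb B_0)\to (m_{q^2}g)(\pb B_0)$, so the identity passes to the limit.

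With the twisting rule in hand, I would write $\pb h = (\db h)\,\pb B_0$ using Corollary \ref{coro: db and pb correspond general} (applicable since $h\in\Dom(\pb)\cap C^*(1,B_0)$), and then compute
\[
\pb h\cdot g = (\db h)(\pb B_0)\,g = (\db h)(m_{q^2}g)(\pb B_0) = (m_{q^2}g)(\db h)(\pb B_0) = (m_{q^2}g)\,\pb h,
\]
where the second equality is the twisting rule, and the third uses that $\db h$ and $m_{q^2}g$ both lie in the commutative algebra $C^*(1,B_0)\cong C(sp(B_0))$ and hence commute.

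The induction and the final chain of equalities are routine. The one step deserving care is the passage from polynomials to a general $g$: I must justify that the left and right module multiplications on $L^2(\Omega^{0,1}(\C P^1_q))$ are continuous for the relevant norms so that the identity survives the limiting process. I expect this continuity argument to be the main (though mild) obstacle.
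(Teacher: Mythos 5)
Your proposal is correct, and it takes a mildly but genuinely different route from the paper. The paper proves the monomial identity $\pb(B_0^m)\,B_0^n=(m_{q^2}B_0^n)\,\pb(B_0^m)$ directly, by comparing the Leibniz expansion of $\pb(B_0^{m+n})$ with the closed formula \eqref{eq: pb of B_0^n}, and then extends to general $g,h$ by invoking that $\pb$ is a closed operator and that multiplication and $m_{q^2}$ are bounded. You instead isolate the single twisting rule $(\pb B_0)\,g=(m_{q^2}g)\,\pb B_0$, obtain it by induction from $(\pb B_0)B_0=q^2B_0\,\pb B_0$, extend it in $g$ by density (where indeed only boundedness of the bimodule actions is needed, since $\pb B_0$ is a fixed form and no unbounded operator appears), and then dispose of the $h$-variable entirely through the factorization $\pb h=(\db h)\,\pb B_0$ of Corollary \ref{coro: db and pb correspond general} together with the commutativity of $C^*(1,B_0)$. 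What your route buys is that you never need a closedness argument for $\pb$ in the $h$-variable --- the tersest step of the paper's proof --- at the cost of leaning on Corollary \ref{coro: db and pb correspond general}, which the paper establishes earlier by essentially the same kind of approximation. The continuity of the left and right module multiplications that you flag as the remaining obstacle is exactly what the paper also asserts without elaboration, and it is standard for the $C^*$-completions in play, so there is no gap.
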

\begin{proof}
   By \eqref{eq: pb of B_0^n} we get
   \begin{equation}\label{eq: commuting partial B0 1}
       \pb(B_0^{m+n})=\frac{1-q^{2m+2n}}{1-q^2}B_0^{m+n-1}\pb B_0.
   \end{equation}
   On the other hand we have
     \begin{equation}\label{eq: commuting partial B0 2}\pb(B_0^{m+n})=\pb(B_0^m)B_0^n+B_0^m\pb(B_0^n)\end{equation}
   where 
   $$
   B_0^m\pb(B_0^n)=B_0^m\frac{1-q^{2n}}{1-q^2}B_0^{n-1}\pb B_0=\frac{1-q^{2n}}{1-q^{2m+2n}}\pb(B_0^{m+n}).
   $$
   Therefore \eqref{eq: commuting partial B0 2} becomes
    $$ 
      \frac{1-q^{2m+2n}}{1-q^{2n}}B_0^m\pb(B_0^n)=\pb(B_0^m)B_0^n+B_0^m\pb(B_0^n)
 $$  
   hence
       \begin{equation}\label{eq: commuting partial B0 3}
    \pb(B_0^m)B_0^n=q^{2m}B_0^m\pb(B_0^n)=(m_{q^2}B_0^m)\pb(B_0^n).
   \end{equation}
   We proved that \eqref{eq: commuting partials in CB0} holds for any monomials hence for any polynomials. The general case now follows from the fact that $\pb$ is a closed operator, together with the fact that multiplication and $m_{q^2}$ are bounded operators.
\end{proof}

\begin{proposition}\label{prop: gauge equivalent between connections}
    Let $f, h \in \A(\C P^1_q)\cap C^*(1, B_0),$ and write $S_f, S_h \subset \N$ for the sets of defective spots of $f, h$ respectively. Then, there exists an invertible $g \in C^*(1, B_0)^{\times}\cap \Dom(\db)$ such that \begin{equation}\label{eq: gauge equivalence in B0}
    \pb g = g \pb f -\pb h \cdot g\end{equation} if and only if $S_f = S_h.$

    In particular if $S_f = S_h$, then the two $\pb$-connections $\overline{\nabla}_{\pb f}$ and $\overline{\nabla}_{\pb h}$  are gauge equivalent.
\end{proposition}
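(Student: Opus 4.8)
The plan is to convert the gauge equation \eqref{eq: gauge equivalence in B0} into a pointwise recursion on $\mathrm{sp}(B_0)$, exactly as in Proposition \ref{prop: solution g in product form}, and then read off when an invertible solution exists in terms of the defective spots. Using Corollary \ref{coro: db and pb correspond general} I would write both $\pb f=(\db f)\pb B_0$ and $\pb h=(\db h)\pb B_0$, and use Lemma \ref{lemma: commuting partials in CB0} to move $\pb h\cdot g$ past to the form $(m_{q^2}g)\cdot\pb h$. After cancelling the common right factor $\pb B_0$ (legitimate since $\pb B_0$ is, up to an invertible factor, the generator of $\Omega^{0,1}$), equation \eqref{eq: gauge equivalence in B0} becomes a scalar functional equation
\[
\db g=g\,\db f-(m_{q^2}g)\,\db h
\]
on $C(\mathrm{sp}(B_0))$. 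Evaluating at $x=q^{2n-2}$ and using $(m_{q^2}g)(q^{2n-2})=g(q^{2n})$ together with the explicit form of $\db$ from Proposition \ref{prop: delta_bar_domain}, I expect to obtain a first-order recursion of the shape
\[
g(q^{2n})\bigl(1-f(q^{2n-2})+f(q^{2n})+h(q^{2n-2})-h(q^{2n})\bigr)=g(q^{2n-2}),
\]
solvable pointwise provided the bracketed coefficient never vanishes.

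The key observation will be that this coefficient vanishes at level $n$ precisely when $\bigl(f(q^{2n})-f(q^{2n-2})\bigr)-\bigl(h(q^{2n})-h(q^{2n-2})\bigr)=1$. I would argue that, because $S_f$ and $S_h$ are finite (Remark \ref{rmk: defective spot finite}) and each defective increment equals exactly $1$, the combined quantity $f(q^{2n})-f(q^{2n-2})-h(q^{2n})+h(q^{2n-2})$ equals $1$ at some $n$ if and only if $n$ lies in exactly one of $S_f,S_h$; hence the coefficient is nonvanishing for every $n$ iff $S_f=S_h$. This is the heart of the equivalence: $S_f=S_h$ forces the two unit-jumps to cancel, keeping every recursion coefficient away from $0$, whereas any asymmetric defective spot produces a vanishing coefficient and thus a non-invertible $g$. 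For the convergence of the resulting infinite product defining $g(0)$, and the verification that $g\in\Dom(\db)$, I would reuse verbatim the absolute-convergence estimate from Proposition \ref{prop: solution g in product form}, since $f,h\in\Dom(\db)$ makes the increments $O(q^{2n})$.

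For the forward direction (existence of invertible $g$ $\Rightarrow$ $S_f=S_h$) I would run the contrapositive through Lemma \ref{lemma: non zero solution non gauge equivalent} in its relative form: if $S_f\neq S_h$ then at a defective level the recursion forces a nonzero non-invertible solution, which obstructs any invertible one. The main obstacle I anticipate is the bookkeeping in the reduction step: I must be careful that the twist $m_{q^2}$ acts on $g$ but \emph{not} on $h$, so the coefficient is genuinely a difference of increments of $f$ and $h$ rather than a symmetric expression, and I must confirm that evaluating at the shifted point $q^{2n-2}$ lines up the arguments correctly. Once that reduction is pinned down, the final clause---that $S_f=S_h$ implies $\overline{\nabla}_{\pb f}$ and $\overline{\nabla}_{\pb h}$ are gauge equivalent---follows immediately, since the invertible $g\in C^*(1,B_0)^{\times}\cap\Dom(\pb)$ satisfying \eqref{eq: gauge equivalence in B0} is exactly the element witnessing gauge equivalence in Definition \ref{def: gauge equivalence}.
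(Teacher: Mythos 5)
Your overall strategy---reduce to a scalar functional equation on $C(sp(B_0))$ via Corollary \ref{coro: db and pb correspond general} and Lemma \ref{lemma: commuting partials in CB0}, then analyze a pointwise recursion---is the paper's strategy, but your reduction step is wrong, and the error propagates. From $\db g = g\,\db f - (m_{q^2}g)\,\db h$ the correct pointwise identity is
\begin{equation*}
g(q^2x)\,\bigl[1 - h(x) + h(q^2x)\bigr] \;=\; g(x)\,\bigl[1 - f(x) + f(q^2x)\bigr],
\end{equation*}
with the $f$-bracket attached to $g(x)$ and the $h$-bracket attached to $g(q^2x)$; you cannot merge them into the single coefficient $1-f(q^{2n-2})+f(q^{2n})+h(q^{2n-2})-h(q^{2n})$ multiplying $g(q^{2n})$ as you propose. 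You even flag the need to track which factor $m_{q^2}$ twists, but your displayed recursion does not respect it. The downstream criterion is then false: your combined coefficient vanishes precisely when the increments of $f$ and $h$ differ by $1$, which is \emph{not} equivalent to ``$n$ lies in exactly one of $S_f,S_h$'' (take increments $\tfrac12$ and $-\tfrac12$ at some level: your coefficient vanishes though neither function has a defective spot there). With the correct two-bracket form the logic is different and cleaner: if $S_f\neq S_h$, at some level exactly one bracket vanishes and the equation forces $g$ to vanish at a point of the spectrum, so $g$ is not invertible; if $S_f=S_h$, the brackets vanish simultaneously, and at a common defective spot the equation reads $0=0$, leaving $g(q^{2n+2})$ \emph{free} to be chosen nonzero---a case your ``cancellation'' picture misses entirely. (Your convergence argument for $g(0)$ and the membership $g\in\Dom(\db)$ are fine and match the paper.)

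The second gap is in your forward direction. You invoke ``Lemma \ref{lemma: non zero solution non gauge equivalent} in its relative form,'' but no such relative form is available---the paper explicitly remarks, immediately after this proposition, that Lemma \ref{lemma: non zero solution non gauge equivalent} does not generalize to solutions of $\pb g = g\pb f - \pb h\, g$ (that is precisely why the proposition only characterizes invertible solutions \emph{within} $C^*(1,B_0)$ rather than settling gauge equivalence when $S_f\neq S_h$). The correct argument for ``invertible $g$ exists $\Rightarrow S_f=S_h$'' is the direct one sketched above: an asymmetric vanishing bracket forces a zero of $g$ on $sp(B_0)$, contradicting invertibility in $C^*(1,B_0)\cong C(sp(B_0))$. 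You need to replace both the combined recursion and the appeal to the nonexistent relative lemma before the proof goes through.
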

\begin{proof}
The second assertion follows from the first one together with  Definition \ref{def: gauge equivalence} and Corollary \ref{coro: inverse domain d}.

    To prove the first assertion, we again use the functional calculus isomorphism $\Psi$ to identify $C^*(1, B_0)$ and $C(sp(B_0))$. Equation \eqref{eq: gauge equivalence in B0} then becomes $$\d g = g \d f - (\d h) g.$$
    By Lemma \ref{lemma: commuting partials in CB0} it becomes 
    \begin{equation}\label{eq: gauge equivalent dilated}
    \d g = g \d f - (m_{q^2} g) \d h,
    \end{equation}
    By Proposition \ref{prop: delta_bar_domain}  we can write \eqref{eq: gauge equivalent dilated}  as \begin{equation}
    \frac{g(x) - g(q^2x)}{x - q^2x} = g(x) \frac{f(x) - f(q^2x)}{x - q^2x} - g(q^2x) \frac{h(x) - h(q^2x)}{x - q^2x}
    \end{equation}
    Since $x - q^2x$ is never zero for $x \in (0,1],$ this becomes \[
    g(x) - g(q^2x) = g(x) (f(x) - f(q^2x)) - g(q^2x) (h(x) - h(q^2x))
    \]
    hence \begin{equation}\label{eq: recursive eq for g}
    g(q^2x)[1 - h(x) + h(q^2x)] = g(x)[1 - f(x) + f(q^2x)]
    \end{equation}
    For $x=q^{2n}$, \eqref{eq: recursive eq for g} becomes
    \begin{equation}\label{eq: recursive eq for g 2}
    g(q^{2n+2})[1 - h(q^{2n}) + h(q^{2n+2})] = g(q^{2n})[1 - f(q^{2n}) + f(q^{2n+2})]
    \end{equation}

If $S_f\neq S_h$, then there must exist an $n$ such that one of $1 - f(q^{2n}) + f(q^{2n+2})$ and $1 - h(q^{2n}) + h(q^{2n+2})$ is zero and the other is non-zero, hence one of $g(q^{2n})$ and $g(q^{2n+2})$ must be zero. Therefore $g$ cannot be invertible.

    If $S_f= S_h$, then $1 - f(q^{2n}) + f(q^{2n+2})$ and $1 - h(q^{2n}) + h(q^{2n+2})$ are both zero or both nonzero. If both are nonzero, then we have 
    \begin{equation}
    g(q^{2n+2})=g(q^{2n})\frac{1 - f(q^{2n}) + f(q^{2n+2})}{1 - h(q^{2n}) + h(q^{2n+2})}.
    \end{equation}
    If both are zero, then \eqref{eq: recursive eq for g 2} implies that $g(q^{2n+2})$ can be any number. We can therefore define $g$ inductively at any $q^{2n}$ so that $g(q^{2n})\neq 0$.

   Moreover since $S_f=S_h$ is a finite set, let 
   $$
   N= \text{the maximum of }S_f.
   $$
Then for any $n>N$, $1 - f(q^{2n}) + f(q^{2n+2})$ and $1 - h(q^{2n}) + h(q^{2n+2})$ are both nonzero hence $g(q^{2n})$ is uniquely determined by $g(q^{2N})$ by
\begin{equation}
    g(q^{2n}) = g(q^{2N})\prod_{k=N}^{n-1} \frac{1 - f(q^{2k}) + f(q^{2k+2})}{1 - h(q^{2k}) + h(q^{2k+2})} .
    \end{equation}

    Therefore
  \begin{equation}\label{eq: infinity product quotient}
    \lim_{n \to \infty} g(q^{2n}) 
  =\prod_{k = N }^{\infty}  \frac{1 - f(q^{2k}) + f(q^{2k + 2})}{1 - h(q^{2k}) + h(q^{2k + 2})}
    \end{equation}

   Since $f,h\in \Dom(\db)$, by the same argument as in the proof of Proposition \ref{prop: solution g in product form} and Corollary \ref{coro: g invertible}, the infinite product on the right hand side of \eqref{eq: infinity product quotient} converges with a nonzero limit. Thus, $g(0)$ exists, and is nonzero.
   Hence $g$ is a continuous function which is invertible.
    
    It remains to show that $g\in \Dom(\db)$. But this  follows from \[
    \d g = g \d f - \d h \cdot g,
    \]
    and the fact that $\d f, \d h,$ and $g$ are all continuous.
\end{proof}

\begin{remark}
    Notice that Proposition \ref{prop: gauge equivalent between connections} gives a sufficient but not necessary condition: if $S_f\neq S_h$, we do not know if $\overline{\nabla}_{\pb f}$ and $\overline{\nabla}_{\pb h}$ are gauge equivalent or not. The main reason is that we do not have a generalization of Lemma \ref{lemma: non zero solution non gauge equivalent} to solutions of
    $$
    \pb g=g \pb f-\pb h g.
    $$

    We will study non-gauge equivalent $\pb$-connections using a different method in Section \ref{section: holomorphic sections}.
\end{remark}

\section{Holomorphic Sections of Non-Standard Line bundles}\label{section: holomorphic sections}

By Corollary \ref{coro: any connection gives a spectral triple}, for any $\pb$-connection $\nb$ on $\L_0$ the space of holomorphic sections
$
\ker (\nb)
$ is finite dimensional. In this section we look for elements in $\ker(\overline{\nabla}_\theta)\subset \L_0=\A(\C P^1_q)$ of the form $fB_-^n $ for some $n \in \Z_{\geq 0}$,  where $f\in C^*(1, B_0)$.

We first prove the following results:

\begin{lemma}\label{lemma: commuting partials in CB0 and B-}
For any $f\in C^*(1,B_0)$ and $n\in \N$ we have
\begin{equation}\label{eq: commuting partials in CB0 and B-}
 B_-^n ~f=(m_{q^{2n}}f)B_-^n,
\end{equation}
where $m_{q^{2n}}$ represents a dilation operator as in \eqref{eq: mc}.
\end{lemma}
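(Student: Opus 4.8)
The plan is to reduce the identity to the single defining commutation relation $B_-B_0=q^2B_0B_-$ among the generators of $\A(\C P^1_q)$, to propagate it first across powers of $B_0$ and then across the factor $B_-^n$, and finally to pass from polynomials to all of $C^*(1,B_0)$ by a density argument.

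First I would establish the monomial case $f=B_0^k$. A one-variable induction on $k$, using $B_-B_0=q^2B_0B_-$ repeatedly, gives $B_-B_0^k=q^{2k}B_0^kB_-$; a further induction on $n$, in which each factor $B_-$ acquires a scalar $q^{2k}$ as it is carried past $B_0^k$, yields
\[
B_-^n B_0^k = q^{2nk} B_0^k B_-^n.
\]
Under the functional calculus isomorphism $\Psi$ of \eqref{eq: functional calculus B0} one has $\Psi(B_0^k)=x^k$, and $m_{q^{2n}}(x^k)=(q^{2n}x)^k=q^{2nk}x^k$, so the right-hand side is precisely $(m_{q^{2n}}B_0^k)B_-^n$. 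Hence \eqref{eq: commuting partials in CB0 and B-} holds for every monomial, and by linearity for every polynomial in $B_0$.

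Next I would pass to an arbitrary $f\in C^*(1,B_0)$ by continuity. By Stone--Weierstrass the polynomials $\C[x]\cong\C[B_0]$ are dense in $C^*(1,B_0)\cong C(sp(B_0))$, so I pick polynomials $f_j\to f$. The map $f\mapsto B_-^n f$ is norm-continuous because $B_-$, and therefore $B_-^n$, is a bounded operator; the map $f\mapsto (m_{q^{2n}}f)B_-^n$ is norm-continuous because $0<q^{2n}<1$ guarantees that $m_{q^{2n}}$ extends to a bounded operator on $C(sp(B_0))$ (as noted after \eqref{eq: mc}) and right multiplication by the bounded operator $B_-^n$ is continuous. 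Thus both sides of \eqref{eq: commuting partials in CB0 and B-} depend continuously on $f$ in the norm of the ambient $C^*$-algebra and agree on the dense set of polynomials, so they agree for all $f\in C^*(1,B_0)$. The case $n=0$ is trivial, with $m_{q^0}=\id$.

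I do not expect a genuine obstacle: the computation is driven entirely by the one relation $B_-B_0=q^2B_0B_-$, and the only point requiring care is to observe that both sides are continuous functions of $f$ so that the monomial identity extends to all of $C^*(1,B_0)$. In particular one must invoke the boundedness of the dilation $m_{q^{2n}}$, which is valid precisely because $0<q^{2n}<1$.
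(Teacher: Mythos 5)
Your proof is correct and follows essentially the same route as the paper: verify the identity on monomials $B_0^k$ (reducing to the relation $B_-B_0=q^2B_0B_-$, which gives $B_-^nB_0^k=q^{2nk}B_0^kB_-^n$), extend by linearity to polynomials, and then pass to all of $C^*(1,B_0)$ by Stone--Weierstrass density together with the norm-continuity of multiplication and of the dilation $m_{q^{2n}}$. The only difference is that you spell out the polynomial computation explicitly, which the paper leaves as a one-line remark.
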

\begin{proof}
    Similar to the proof of Lemma \ref{lemma: commuting partials in CB0}, by  \eqref{eq: B0 pb B- B+ commutes}  we can check that \eqref{eq: commuting partials in CB0 and B-} holds when $f$ is a polynomial. The general case follows by  continuity of multiplications and $m_{q^{2n}}$.
\end{proof}

\begin{lemma}\label{lemma: nabla of f B-}
    For  $f\in \Dom(\pb)\cap C^*(1, B_0)$, $h\in  \A(\C P^1_q)\cap C^*(1, B_0)$, and $\theta=\pb h$, we have
        \begin{equation}
            \nb (f B_-^n) = \Big( q^{2n} B_0 \db f + \big(\frac{1-q^{2n}}{1-q^2} - q^{2n} (m_{q^{2n}} \db h) B_0\big) f\Big) B_-^{n-1} \pb B_-,
        \end{equation}
     Here we abuse the notation and denote $\Psi^{-1}(\db (\Psi f))$ simply by $\db f$.
\end{lemma}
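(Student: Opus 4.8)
The plan is to unwind the definition of $\nb=\overline{\nabla}_\theta$ on $\L_0=\A(\C P^1_q)$ directly. Since $\overline{\nabla}^{(0)}=\pb$ and the twist flip $\Phi_{(0)}$ reduces to the canonical identification on the trivial bundle, formula \eqref{eq: pb-connections omega} reads $\nb(s)=\pb(s)-s\theta$ with $\theta=\pb h$. Taking $s=fB_-^n$, I would split the computation into the two pieces $\pb(fB_-^n)$ and $fB_-^n\theta$ and reduce each to a left scalar multiple of $B_-^{n-1}\pb B_-$, using throughout that $f$, $\db f$, $\db h$, and $B_0$ all lie in the commutative algebra $C^*(1,B_0)$.

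For the first piece I would apply the Leibniz rule $\pb(fB_-^n)=\pb(f)B_-^n+f\pb(B_-^n)$. A short induction on the relation $(\pb B_-)B_-=q^2B_-\pb B_-$ from Lemma \ref{lemma: B0 pb B-  B+commutes} gives $\pb(B_-^n)=\tfrac{1-q^{2n}}{1-q^2}B_-^{n-1}\pb B_-$, which produces the term $\tfrac{1-q^{2n}}{1-q^2}f$. For $\pb(f)B_-^n$, I would write $\pb(f)=(\db f)\pb B_0$ by Corollary \ref{coro: db and pb correspond general} and then move $\pb B_0$ across $B_-^n$: combining $(\pb B_0)B_-=q^2B_0\pb B_-$ with $(\pb B_-)B_-^{n-1}=q^{2(n-1)}B_-^{n-1}\pb B_-$ yields $(\pb B_0)B_-^n=q^{2n}B_0B_-^{n-1}\pb B_-$. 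Since $\db f$ commutes with $B_0$, this contributes the term $q^{2n}B_0\,\db f$.

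For the second piece I would use Corollary \ref{coro: db and pb correspond general} again to write $\theta=(\db h)\pb B_0$, then Lemma \ref{lemma: commuting partials in CB0 and B-} to pull $\db h$ to the left via $B_-^n(\db h)=(m_{q^{2n}}\db h)B_-^n$. The analogous reordering identity $B_-^n\pb B_0=q^{2n}B_0B_-^{n-1}\pb B_-$, obtained from $B_-\pb B_0=q^2B_0\pb B_-$ together with $B_-^{n-1}B_0=q^{2(n-1)}B_0B_-^{n-1}$ (the latter from $B_-B_0=q^2B_0B_-$), reduces $fB_-^n\theta$ to $q^{2n}(m_{q^{2n}}\db h)B_0\,f\,B_-^{n-1}\pb B_-$ after commuting the $C^*(1,B_0)$-factors. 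Subtracting this from the first piece assembles exactly the bracketed coefficient in the claimed identity.

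The computation is essentially bookkeeping, so the main obstacle is purely in the noncommutative reorderings: one must repeatedly and correctly move the forms $\pb B_0$ and $\pb B_-$ past powers of $B_-$ using the exact $q$-commutation relations, and keep each scalar function on the correct side. The only conceptual point to verify carefully is that on $\L_0$ the connection genuinely acts as $s\mapsto\pb(s)-s\theta$, i.e.\ that $\Phi_{(0)}$ contributes no extra twist on the trivial bundle; once that is settled, every remaining step is a direct application of Lemma \ref{lemma: B0 pb B-  B+commutes}, Corollary \ref{coro: db and pb correspond general}, and Lemma \ref{lemma: commuting partials in CB0 and B-}.
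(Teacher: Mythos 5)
Your proposal is correct and follows essentially the same route as the paper's proof: decompose $\nb(fB_-^n)=\pb(fB_-^n)-fB_-^n\pb h$ (with $\Phi_{(0)}$ trivial on $\L_0$), rewrite $\pb f=(\db f)\pb B_0$ and $\pb h=(\db h)\pb B_0$, push $\pb B_0$ and $\pb B_-$ past powers of $B_-$ via the $q$-commutation relations of Lemma \ref{lemma: B0 pb B-  B+commutes} and the dilation identity of Lemma \ref{lemma: commuting partials in CB0 and B-}, and assemble the coefficient using commutativity of $C^*(1,B_0)$. The only (immaterial) difference is how you derive $(\pb B_0)B_-^n=q^{2n}B_0B_-^{n-1}\pb B_-$, using $B_-B_0=q^2B_0B_-$ rather than iterating $(\pb B_-)B_-=q^2B_-\pb B_-$ as the paper does.
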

\begin{proof}
    By the definition of $\nb$   we get $$
    \nb(f B_-^n) = \pb(f B_-^n) - (f B_-^n)\theta= \pb(f B_-^n) - (f B_-^n)\pb h.
    $$
Notice that we are working with $\L_0$ hence there is no need of $\Phi_{(n)}$ as in \eqref{eq: pb-connections omega}. 

By  \eqref{eq: B0 pb B- B+ commutes}  and \eqref{eq: db and pb correspond general} we further get
    \begin{equation}\label{eq: nb f B- 1}
    \begin{split}
   \nb(f B_-^n) & = (\d f) \pb B_0 B_-^n + f \pb(B_-^n) - f B_-^n \pb h\\
    =& q^2 (\d f) B_0 (\pb B_-) B_-^{n-1} + \frac{1-q^{2n}}{1-q^2} f B_-^{n-1} \pb B_- - f B_-^n \pb h\\
    =& q^{2n} (\d f) B_0 B_-^{n-1} \pb B_- + \frac{1-q^{2n}}{1-q^2} f B_-^{n-1} \pb B_- - f B_-^n \pb h.
    \end{split}
    \end{equation}

    We write $\pb h = \d h \pb B_0.$ Then we have 
    \begin{equation}\label{eq: B-n pb h}
    B_-^n \pb h = B_-^n \d h \pb B_0 =  (m_{q^{2n}} \d h) B_-^n \pb B_0.
    \end{equation}
    By \eqref{eq: B0 pb B- B+ commutes}, $\pb B_0$ commutes with $B_-,$ so the right hand side of \eqref{eq: B-n pb h} becomes \begin{equation}\label{eq: mq db h}
    (m_{q^{2n}} \d h) (\pb B_0) B_-^n = q^2 (m_{q^{2n}} \d h) B_0 (\pb B_-) B_-^{n-1} = q^{2n} (m_{q^{2n}} \d h) B_0 B_-^{n-1} \pb B_-,
    \end{equation}
    \eqref{eq: mq db h} together with \eqref{eq: nb f B- 1} give
    \begin{equation}\label{eq: nb f B- 2} 
    \begin{split}
    \nb(f B_-^n) =& q^{2n} (\d f) B_0 B_-^{n-1} \pb B_- +  \frac{1-q^{2n}}{1-q^2} f B_-^{n-1} \pb B_- - q^{2n} f (m_{q^{2n}} \d h) B_0 B_-^{n-1} \pb B_-\\
    =& \Big( q^{2n} (\d f) B_0 +  \frac{1-q^{2n}}{1-q^2}f  - q^{2n} f (m_{q^{2n}} \d h) B_0 \Big) B_-^{n-1} \pb B_- 
    \end{split}
    \end{equation}

    Since everything in these large parentheses is in the commutative $C^*$-algebra $C^*(1, B_0),$ we can rewrite \eqref{eq: nb f B- 2} as \[
    \nb(f B_-^n)=\Big( q^{2n} B_0 \d f + \big( \frac{1-q^{2n}}{1-q^2}  - q^{2n} (m_{q^{2n}} \d h) B_0\big) f \Big) B_-^{n-1} \pb B_- 
    \]  
\end{proof}

\begin{corollary}\label{coro: ker nb B-n}
   Let $h\in  \A(\C P^1_q)\cap C^*(1, B_0)$ and  $\theta = \d h$. Consider the $\pb$-connection $\nb$ on $\L_0$. Suppose the defective spot $S_h\neq \emptyset$. Then for any $0\leq n< \max{S_h}$, there exists an element $f\in \Dom(\pb)\cap   C^*(1, B_0)$ such that 
   $$
   (\Psi f)(1)\neq 0, \text{ and } f B_-^n\in \ker(\nb),
   $$
   where $\Psi: C^*(1, B_0)\overset{\sim}{\to}C(sp(B_0))$ is the functional calculus isomorphism
\end{corollary}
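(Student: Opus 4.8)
The plan is to impose $\nb(fB_-^n)=0$ directly and solve the resulting equation through the functional calculus isomorphism $\Psi$. By Lemma \ref{lemma: nabla of f B-} (for $n\geq 1$),
$$
\nb(fB_-^n)=\Big(q^{2n}B_0\db f+\big(\tfrac{1-q^{2n}}{1-q^2}-q^{2n}(m_{q^{2n}}\db h)B_0\big)f\Big)B_-^{n-1}\pb B_-,
$$
so, since we only need to produce one $f$, it is enough to arrange that the bracketed coefficient vanishes as an element of $C^*(1,B_0)$. Writing $F=\Psi f$ and identifying $h$ with $\Psi h$ as elsewhere in the paper, I would evaluate the coefficient at the spectral points $x=q^{2k}$ using Proposition \ref{prop: delta_bar_domain}. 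Setting $a_k:=F(q^{2k})$, the vanishing condition collapses to the first–order recursion
$$
a_{k+1}=q^{-2n}\big(1-h(q^{2(n+k)})+h(q^{2(n+k+1)})\big)\,a_k .
$$
The case $n=0$ is exactly the equation $\db f=f\db h$ already handled by Proposition \ref{prop: solution g in product form} and Corollary \ref{coro: g invertible}, so I would dispatch it by citing those results.

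Next I would set the free initial value $a_0=F(1)=1\neq 0$ and define $a_k$ for $k\geq 1$ by the recursion. The decisive observation is that the multiplicative factor $1-h(q^{2(n+k)})+h(q^{2(n+k+1)})$ is precisely the factor appearing in \eqref{eq: def of g}, and it vanishes exactly when $j:=n+k+1$ is a defective spot of $h$. Since the hypothesis gives $n<\max S_h$, the set $\{\,j\in S_h:j>n\,\}$ is nonempty; letting $j_0$ be its smallest element and $k_0:=j_0-n-1\geq 0$, the factor at $k=k_0$ is zero, whence $a_{k_0+1}=0$ and $a_k=0$ for all $k>k_0$. Thus $F$ is supported on the finite set $\{1,q^2,\dots,q^{2k_0}\}$ and in particular vanishes at the limit point $0$. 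Such an $F$ is automatically continuous on $sp(B_0)$ and lies in $\Dom(\db)$, since its difference quotient is again finitely supported (Proposition \ref{prop: delta_bar_domain}), and it satisfies $F(1)=a_0\neq 0$. Setting $f=\Psi^{-1}(F)$ then gives the required element, with $fB_-^n\in\ker(\nb)$.

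I expect the main obstacle to be the termination of the recursion rather than its derivation. For $n\geq 1$ the factor tends to $q^{-2n}>1$ as $k\to\infty$, so any solution started from $a_0\neq 0$ that never meets a vanishing factor would grow like $q^{-2nk}$ and fail even to be bounded on $sp(B_0)$, hence could not correspond to an element of $C^*(1,B_0)$. The hypothesis $n<\max S_h$ is exactly what forces the recursion to encounter a zero factor at a finite stage, truncating the sequence before it can diverge; this is the structural reason the statement is restricted to $0\leq n<\max S_h$. The only routine work left is the arithmetic reduction of the bracketed coefficient to the stated recursion (a direct evaluation via Proposition \ref{prop: delta_bar_domain} after clearing the common denominator $1-q^2$) and the elementary checks that a finitely supported $F$ is continuous and lies in $\Dom(\db)$.
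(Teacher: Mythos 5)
Your proposal is correct and follows essentially the same route as the paper's proof: apply Lemma \ref{lemma: nabla of f B-}, force the coefficient in $C^*(1,B_0)$ to vanish, read this off pointwise on $sp(B_0)$ via Proposition \ref{prop: delta_bar_domain} to get the recursion $f(q^{2k+2})=q^{-2n}\bigl(1-h(q^{2k+2n})+h(q^{2k+2n+2})\bigr)f(q^{2k})$, and use $n<\max S_h$ to locate a defective spot that zeroes out the sequence from a finite stage, so that $f$ is finitely supported, continuous, and in $\Dom(\db)$ with $f(1)\neq 0$. Your added remarks (handling $n=0$ via Proposition \ref{prop: solution g in product form} and noting that without a zero factor the solution would grow like $q^{-2nk}$) are consistent with the paper's own remark following the corollary.
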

\begin{proof}
Again we use $\Psi$ to identify $C^*(1, B_0)$ and $ C(sp(B_0))$. By Lemma \ref{lemma: nabla of f B-}, to find $f$ such that $f B_-^n\in \ker(\nb)$, it is sufficient to find an $f\in \Dom(\db)\cap   C(sp(B_0))$ such that
\begin{equation}
  q^{2n} B_0 \d f + \big( \frac{1-q^{2n}}{1-q^2}  - q^{2n} (m_{q^{2n}} \d h) B_0\big) f=0,
\end{equation}
i.e. for any $q^{2k}\in sp(B_0)$, $k\geq 0$ we have 
\begin{equation}\label{eq: nb f B- 1}
     q^{2k+2n}  \frac{f(q^{2k}) - f(q^{2k+2})}{q^{2k} - q^{2k+2}} + \frac{1-q^{2n}}{1-q^2}f(q^{2k})  - q^{2n} \frac{h(q^{2k+2n}) - h(q^{2k+2n+2})}{q^{2k+2n} - q^{2k+2n+2}} q^{2k} f(q^{2k})  = 0
\end{equation}

    From \eqref{eq: nb f B- 1} we get
\begin{equation}
        f(q^{2k+2})=\frac{1-h(q^{2k+2n})+h(q^{2k+2n+2})}{q^{2n}}f(q^{2k}).
\end{equation}
Therefore for any $m\geq 1$ we have
\begin{equation}\label{eq: nb fB- recursive}
f(q^{2m})=  f(1) \prod_{k = 1}^{m} \frac{1 - h(q^{2n + 2k-2}) + h(q^{2n+2k })}{q^{2n}} .
\end{equation}

Since $n<\max{S_h}$, there exists $m_0> 0$ such that $n + m_0 \in S_h$. By \eqref{eq: nb fB- recursive} we have
$$
f(q^{2m})=0, \text{ for any }m \geq m_0.
$$
Therefore we can choose $f(1)\neq 0$ and the function $f$ defined by \eqref{eq: nb fB- recursive} is continuous and belongs to $\Dom(\db)$. Moreover it satisfies $\nb(fB_-^n)=0$.
\end{proof}

\begin{remark}
    If $n\geq \max{S_n}$, then 
    $$1 - h(q^{2n + 2k-2}) + h(q^{2n+2k })\neq 0 \text{ for any } k>0,$$  and 
    $$\lim_{k\to \infty}\big(1 - h(q^{2n + 2k-2}) + h(q^{2n+2k })\big)=1.$$
    Since $0<q^{2n}<1$, the $f(q^{2m})$ defined by \eqref{eq: nb fB- recursive} diverges unless $f(1)=0$.
\end{remark}

\begin{lemma}\label{lemma: fB- linearly independent}
Let $n_1,\ldots, n_k$ be distinct nonnegative integers. Then for any $f_1,\ldots f_k\in C^*(1,B_0)$ such that $(\Psi f_i)(1)\neq 0$ for each $i$, the elements $f_1B_-^{n_1},\ldots, f_kB_-^{n_k}$ are linearly independent over $\C$.
\end{lemma}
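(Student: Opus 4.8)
The plan is to pass to the faithful representation $\pi:\mathcal{C}(\C P^1_q)\to B(\ell^2)$ of Proposition \ref{prop: standard representation} and prove the statement at the level of operators. Since $\pi$ is an algebra homomorphism, any vanishing $\C$-linear combination $\sum_{i=1}^k \lambda_i f_i B_-^{n_i}=0$ maps to the zero operator, so it suffices to show that the operators $\pi(f_i B_-^{n_i})$ are linearly independent. The structural fact I will exploit is that $\pi(B_-)$ is a weighted shift lowering the basis index by one, so $\pi(B_-)^{n_i}$ lowers it by exactly $n_i$, while each $f_i\in C^*(1,B_0)$ acts diagonally via $\pi(f_i)e_m=(\Psi f_i)(q^{2m})e_m$. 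Because the $n_i$ are distinct, the various summands move a fixed basis vector to mutually distinct basis vectors, which is precisely what allows one coefficient to be isolated at a time.

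Concretely, I would first iterate $\pi(B_-)e_m=q^{m}\sqrt{1-q^{2m}}\,e_{m-1}$ (with $\pi(B_-)e_0=0$) to obtain, for the specific vector $e_{n}$,
\[
\pi(B_-)^{n}e_{n}=\Big(\prod_{l=1}^{n} q^{l}\sqrt{1-q^{2l}}\Big)e_0 .
\]
For $0<q<1$ every factor $q^{l}\sqrt{1-q^{2l}}$ is strictly positive, so this scalar is nonzero (the empty product for $n=0$ being $1$). Then, for each fixed index $j$, I would apply the combination to $e_{n_j}$ and extract the coefficient of $e_0$. The summand $f_i B_-^{n_i}$ sends $e_{n_j}$ to a multiple of $e_{n_j-n_i}$ (or to $0$ when $n_i>n_j$), and the target is $e_0$ exactly when $n_j-n_i=0$, i.e.\ $n_i=n_j$, which by distinctness forces $i=j$. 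Hence only the $j$-th term contributes to the $e_0$-component of $\pi\big(\sum_i\lambda_i f_i B_-^{n_i}\big)e_{n_j}$, and that component equals
\[
\lambda_j\,(\Psi f_j)(1)\Big(\prod_{l=1}^{n_j}q^{l}\sqrt{1-q^{2l}}\Big).
\]

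Since the operator is assumed to vanish, this coefficient is $0$; as both $(\Psi f_j)(1)\neq 0$ (by hypothesis) and the product are nonzero, I conclude $\lambda_j=0$. Letting $j$ range over $1,\dots,k$ yields $\lambda_1=\dots=\lambda_k=0$, which is the asserted independence. I do not anticipate a genuine obstacle: the argument is elementary once the weighted-shift picture is in place. The only points requiring care are the bookkeeping of shift indices (confirming that $n_j-n_i=0$ really does single out $i=j$, using distinctness) and checking that the diagonal action of $f_j$ contributes exactly the value $(\Psi f_j)(1)$ at the bottom vector $e_0$ — this is where the normalization hypothesis $(\Psi f_i)(1)\neq 0$ is used, and it furnishes precisely the nonvanishing needed to complete the argument.
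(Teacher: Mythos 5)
Your proof is correct and follows essentially the same route as the paper: both pass to the faithful weighted-shift representation $\pi$ of Proposition \ref{prop: standard representation}, use that $B_-^{n_i}$ lowers the basis index by exactly $n_i$ while $f_i$ acts diagonally with value $(\Psi f_i)(1)$ on $e_0$, and isolate one coefficient at a time via the $e_0$-component. The only cosmetic difference is that you test against the basis vectors $e_{n_j}$ directly and project onto $e_0$ for each $j$, whereas the paper applies the operator to $\pi(B_+^{n_s})e_0$ for the minimal index $n_s$ with nonzero coefficient; the content is identical.
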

\begin{proof}
    Suppose we have $c_1,\ldots, c_k\in \C$ which are not all zeros.
    Let $n_s$ be the smallest $n_i$ such that $c_i\neq 0$. Recall the faithfull representation $\pi: \mathcal{C}(\C P^1_q)\to B(\ell^2)$ in Proposition \ref{prop: standard representation}. It is clear that$(\Psi f_s)(1)\neq 0$ implies
    $\pi(f_s)(e_0)\neq 0$. Moreover \eqref{eq: standar representation} implies
    \begin{equation}
        \pi(B_-^{n_s})\pi(B_+^{n_s})(e_0)=\lambda e_0, \text{ for some }\lambda\neq 0,
    \end{equation}
    and
    \begin{equation}
          \pi(B_-^{n})\pi(B_+^{n_s})(e_0)=0, \text{ for any }n> n_s.
    \end{equation}

    We apply $\pi(\sum_{i=1}^kc_if_iB_-^{n_i})$ to the vector $\pi(B_+^{n_s})(e_0)\in \ell^2$ and get
    \begin{equation}
    \begin{split}
    &\pi(\sum_{i=1}^kc_if_iB_-^{n_i})\pi(B_+^{n_s})(e_0)=\\
    =& c_s\lambda (\pi (f_s))(e_0)+\sum_{n_i>n_s}c_i(\pi (f_i)) \pi(B_-^{n_i})\pi(B_+^{n_s})(e_0)\\
    =& c_s \lambda (\pi (f_s))(e_0)+0=c_s \lambda (\pi (f_s))(e_0)\neq 0.
    \end{split}
    \end{equation}
    So we have $\sum_{i=1}^kc_if_iB_-^{n_i}\neq 0$.
\end{proof}

\begin{corollary} \label{coro: ker_dim_lower_bound}
    Let $h\in  \A(\C P^1_q)\cap C^*(1, B_0)$ and $N$ be the maximal element in $S_h$. Then for  $\theta = \pb h$ and the $\pb$-connection $\nb$ on $\L_0$, we have 
    \begin{equation}
    \dim(\ker(\nb)) \geq N.\end{equation}
\end{corollary}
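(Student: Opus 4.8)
The plan is to assemble $N$ explicit holomorphic sections and verify their linear independence, treating the two immediately preceding results as black boxes. First I would set $N = \max S_h$ (which exists precisely because $S_h \neq \emptyset$) and, for each integer $n$ with $0 \leq n \leq N-1$, invoke Corollary \ref{coro: ker nb B-n}. Since $n < \max S_h$, that corollary produces an element $f_n \in \Dom(\pb) \cap C^*(1, B_0)$ satisfying $(\Psi f_n)(1) \neq 0$ and $f_n B_-^n \in \ker(\nb)$. This yields a family of $N$ elements of $\ker(\nb)$, namely
\[
f_0 B_-^0, \quad f_1 B_-^1, \quad \ldots, \quad f_{N-1} B_-^{N-1},
\]
indexed by the pairwise distinct exponents $0, 1, \ldots, N-1$.

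The second step is to show that these $N$ elements are linearly independent over $\C$. The exponents $0, 1, \ldots, N-1$ are distinct nonnegative integers, and each coefficient $f_n$ satisfies $(\Psi f_n)(1) \neq 0$ by construction, so the hypotheses of Lemma \ref{lemma: fB- linearly independent} are met. That lemma then gives the linear independence of the family immediately. Since $\ker(\nb)$ contains $N$ linearly independent vectors, I conclude $\dim(\ker(\nb)) \geq N$, which is the asserted bound.

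I do not expect a genuine obstacle here: the corollary is simply the combination of Corollary \ref{coro: ker nb B-n} (existence of a kernel element $f_n B_-^n$ for every $n$ strictly below the top defective spot) with Lemma \ref{lemma: fB- linearly independent} (their independence). The only point deserving mild care is the bookkeeping of the index range: the set $\{0, 1, \ldots, N-1\}$ has exactly $N$ elements, and it is precisely the strict inequality $n < \max S_h$ in Corollary \ref{coro: ker nb B-n} that makes every such $n$ admissible. Tracking this carefully is what guarantees the count matches the claimed lower bound of $N$ rather than $N-1$ or $N+1$.
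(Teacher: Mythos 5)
Your proposal is correct and follows exactly the paper's own argument: the paper proves this corollary as "a direct consequence of Corollary \ref{coro: ker nb B-n} and Lemma \ref{lemma: fB- linearly independent}," which is precisely the combination you spell out, with the correct count of $N$ admissible exponents $n \in \{0,\ldots,N-1\}$.
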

\begin{proof}
    It is a direct consequence of Corollary \ref{coro: ker nb B-n} and Lemma \ref{lemma: fB- linearly independent}.
\end{proof}

\begin{remark}
If we want to extend the result to $B_+$, then we notice that we have an analogue of Lemma \ref{lemma: fB- linearly independent} for  $B_+^{n_1}f_1,\ldots, B_+^{n_k}f_k$ instead of $f_1B_+^{n_1},\ldots, f_kB_+^{n_k}$.

However, a careful computation shows
\begin{equation}
      \nb(B_+^n f) = B_+^{n-1}  \pb B_+ \Big( \frac{1-q^{2n}}{1-q^2} f +  (q^{-4} B_0 - q^{-2})
    \big((m_{q^{-2}} \db f) - (m_{q^{-2}} f)(m_{q^{-2}} \db h)\big)  \Big).
\end{equation}
Since $q^{-2} > 1,$ this dilation operator $m_{q^{-2}}$ is unbounded, so we cannot use functional calculus to solve this equation.
\end{remark}

The following theorem is the main result of this paper:

\begin{theorem}\label{thm: infinitely many gauge class}
    There exist infinitely many gauge equivalent classes of holomorphic structures on $\L_0$, hence on $\L_n$.
\end{theorem}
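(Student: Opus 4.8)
The plan is to use the dimension of the space of holomorphic sections, $\dim \ker \overline{\nabla}$, as a numerical gauge invariant, and to exhibit $\pb$-connections on $\L_0$ whose kernels are finite but of unboundedly large dimension. First I would record that $\dim \ker \overline{\nabla}$ is genuinely a gauge invariant. Suppose $\overline{\nabla}_{\theta_1}$ and $\overline{\nabla}_{\theta_2}$ on $\L_0$ are gauge equivalent, so that $\overline{\nabla}_{\theta_1} = g^{-1}\circ \overline{\nabla}_{\theta_2}\circ g$ for some invertible $g\in \mathcal{C}(\C P^1_q)^{\times}\cap \Dom(\pb)$ acting by (right) multiplication. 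Then multiplication by $g$ carries $\ker \overline{\nabla}_{\theta_1}$ into $\ker \overline{\nabla}_{\theta_2}$: if $\overline{\nabla}_{\theta_1}(s)=0$, then $g^{-1}\overline{\nabla}_{\theta_2}(sg)=0$, whence $\overline{\nabla}_{\theta_2}(sg)=0$. Since multiplication by $g^{-1}$ supplies a two-sided inverse, this is a $\C$-linear isomorphism $\ker \overline{\nabla}_{\theta_1}\xrightarrow{\sim}\ker \overline{\nabla}_{\theta_2}$, so the two kernels share the same dimension. By Corollary \ref{coro: any connection gives a spectral triple} every such kernel is finite dimensional, so $\dim\ker$ is a well-defined $\N$-valued invariant of gauge-equivalence classes.

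Next I would manufacture connections realizing arbitrarily large values of this invariant. For each $N\in\N$ I set $h_N := \frac{B_0}{q^{2N-2}-q^{2N}}\in \A(\C P^1_q)\cap C^*(1,B_0)$. By the observation following Definition \ref{def: defective spot}, the element $h_N$ has a defective spot at $N$; moreover $\Psi h_N$ is linear in $x$, and the quantity $(\Psi h_N)(q^{2n})-(\Psi h_N)(q^{2n-2})$ is a nonzero scalar multiple of $q^{2n-2}(q^2-1)$, which is strictly monotonic in $n$. Hence $N$ is the \emph{only} defective spot, i.e.\ $\max S_{h_N}=N$. Applying Corollary \ref{coro: ker_dim_lower_bound} with $\theta=\pb h_N$ then yields $\dim\ker \overline{\nabla}_{\pb h_N}\geq N$.

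Finally I would assemble the pieces. The numbers $d_N := \dim\ker\overline{\nabla}_{\pb h_N}$ are finite by the first paragraph yet satisfy $d_N\geq N$, so $\{d_N : N\in\N\}$ is an unbounded, hence infinite, subset of $\N$. Selecting one connection for each distinct value of $d_N$ produces infinitely many $\pb$-connections on $\L_0$ with pairwise distinct kernel dimensions; since $\dim\ker$ is a gauge invariant, no two of them are gauge equivalent, and—being connections on $\L_0$—all are automatically flat and so are honest holomorphic structures. This proves the statement for $\L_0$, and Proposition \ref{prop: one-one corrspondence gauge equivalence class} transports the infinitude of gauge classes to every $\L_n$. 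The substantive input, namely finiteness of $\dim\ker$ via the spectral triple and the lower bound from the $fB_-^n$ construction, is already in hand, so the theorem is essentially a bookkeeping corollary; the one point I expect to require care is the gauge invariance of $\dim\ker$, where one must verify that right multiplication by $g\in\mathcal{C}(\C P^1_q)^{\times}\cap\Dom(\pb)$ and by $g^{-1}$ preserves $\Dom(\overline{\nabla})$ and genuinely intertwines the two connections.
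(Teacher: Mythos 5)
Your proposal is correct and follows essentially the same route as the paper: the same test elements $h_N = B_0/(q^{2N-2}-q^{2N})$, the lower bound on $\dim\ker$ from Corollary \ref{coro: ker_dim_lower_bound}, finiteness from Corollary \ref{coro: any connection gives a spectral triple}, gauge invariance of the kernel dimension, and the transfer to $\L_n$ via Proposition \ref{prop: one-one corrspondence gauge equivalence class}. The only difference is that you spell out the verification that $\dim\ker\overline{\nabla}$ is a gauge invariant, which the paper asserts without proof.
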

\begin{proof}
    We know  that the element $$
    h=\frac{B_0}{q^{2N-2}-q^{2N}}
    $$ has  $S_h=\{N\}$. Therefore by Corollary \ref{coro: ker_dim_lower_bound}, for any $N$, we can find a $\pb$-connection $\nb$ on $\L_0$ such that 
    $$
    \dim(\ker(\nb)) \geq N.
    $$

On the other hand, by Corollary \ref{coro: any connection gives a spectral triple}, $\ker(\nb)$ is finite dimensional for any  $\pb$-connection $\nb$ on $\L_0$. Since the dimension of $\ker(\nb)$  is invariant under gauge equivalence, there exist infinitely many gauge equivalent classes of holomorphic structures on $\L_0$.

The $\L_n$ case follows from the $\L_0$ case and Proposition \ref{prop: one-one corrspondence gauge equivalence class}.
\end{proof}

\begin{remark}
It is a classical result that on commutative $\C P^1$, there exists a unique holomorphic structure up to gauge equivalence on each $\mathcal{O}(n)$. Therefore the existence of infinitely many holomorphic structures in Theorem \ref{thm: infinitely many gauge class} is a new phenomenon in noncommutative geometry which has no counterpart in the commutative world.
\end{remark}

\section{Future Work}\label{section: future work}

Note that Theorem \ref{thm: infinitely many gauge class} does not provide a classification of the gauge equivalence classes of holomorphic structures on $\L_n$ over $\C P^1_q$. It would be interesting to classify and parametrize all such gauge equivalence classes, that is, to determine the Picard group of the quantum projective line $\C P^1_q$.

We also notice that higher dimensional quantum projective spaces $\C P^l_q$ and line bundles over them were  studied in \cite{d2008noncommutative}, \cite{d2010anti}, \cite{d2010dirac}, \cite{d2013geometry}, and \cite{khalkhali2011noncommutative}. It is interesting to study non-standard holomorphic structures on line bundles over $\C P^l_q$. The analysis will be more involved in higher dimensional case as the flatness condition $\nb\circ \nb=0$ does not hold automatically on $\C P^l_q$  for $l\geq 2$.

\bibliographystyle{plain}
\bibliography{refs}

\end{document}